%
%
%
\documentclass{amsproc}

\usepackage{amsmath}
\usepackage{amssymb}
\usepackage[mathcal]{eucal}
\usepackage[bb=pazo]{mathalfa}
\usepackage{rotating}
\usepackage{graphicx}
\usepackage[tableposition=below]{caption}
\usepackage{pigpen}
\usepackage{multirow}
\usepackage{mathtools}
\usepackage{array}
\newcolumntype{H}{>{\setbox0=\hbox\bgroup}c<{\egroup}@{}}
\usepackage[textsize=tiny]{todonotes}

\DeclareGraphicsExtensions{.png,.pdf,.eps}
\usepackage[all,2cell,cmtip]{xy}
\usepackage{tikz}
\usepackage{color}
\usepackage{bm}
\usepackage{longtable}
\usepackage{soul}
\setlength\LTleft{-0.2cm}
\setlength\LTright{-0.2cm}

\newtheorem{theorem}{Theorem}[section]
\newtheorem{lemma}[theorem]{Lemma}
\newtheorem{corollary}[theorem]{Corollary}
\newtheorem{proposition}[theorem]{Proposition}

\theoremstyle{definition}

\newtheorem{definition}[theorem]{Definition}

\newtheorem{remark}[theorem]{Remark}

\newtheorem{example}[theorem]{Example}


\def\C{{\mathbb C}}

\def\G{{\mathbb G}}

\def\O{{\mathbb O}}
\def\P{{\mathbb P}}
\def\Q{{\mathbb Q}}
\def\R{{\mathbb R}}
\def\Z{{\mathbb Z}}

\def\cC{{\mathcal C}}
\def\cD{{\mathcal D}}
\def\cE{{\mathcal E}}
\def\cF{{\mathcal F}}

\def\cK{{\mathcal K}}
\def\cL{{\mathcal L}}

\def\cN{{\mathcal{N}}}
\def\cO{{\mathcal{O}}}

\def\cQ{{\mathcal{Q}}}

\def\cS{{\mathcal S}}

\def\Q{{\mathbb{Q}}}
\def\G{{\mathbb{G}}}

\def\fg{{\mathfrak g}}
\def\fh{{\mathfrak h}}

\def\PGL{\operatorname{PGl}}
\def\SL{\operatorname{Sl}}

\def\PSO{\operatorname{PSO}}
\def\SO{\operatorname{SO}}
\def\SP{\operatorname{Sp}}
\def\PSP{\operatorname{PSp}}
\def\Spin{\operatorname{Spin}}

\def\Coeff{\operatorname{Coeff}}
\def\Nest{\operatorname{Nest}}
\def\HH{\operatorname{H\hspace{0.5pt}}}

\def\Mo{\operatorname{\hspace{0cm}M}}
\def\Na{\operatorname{\hspace{0cm}L}}

\def\DA{{\rm A}}
\def\DB{{\rm B}}
\def\DC{{\rm C}}
\def\DD{{\rm D}}
\def\DE{{\rm E}}
\def\DF{{\rm F}}
\def\DG{{\rm G}}

\def\lra{\longrightarrow}

\def\ra{\rightarrow}
\def\lra{\longrightarrow}

\def\operatorname#1{\mathop{\rm #1}\nolimits}

\def\Proj{\operatorname{Proj}}

\def\Hom{\operatorname{Hom}}

\def\Pic{\operatorname{Pic}}

\def\id{\operatorname{id}}

\def\rk{\operatorname{rk}}

\def\deg{\operatorname{deg}}

\def\det{\operatorname{det}}

\def\NU{{\operatorname{N^1}}}
\def\HH{\operatorname{H\hspace{0.5pt}}}

\def\ad{\operatorname{ad}}

\def\Cox{{\operatorname{\hspace{0cm}h}}}

\newcommand{\ol}[1]{\overline{#1}}
\newcommand{\pb}{\ar@{}[dr]|(.50){\text{\pigpenfont J}}}

\makeatletter
 
\newcommand*\wthelper[2]{%
        \hbox{\dimen@\accentfontxheight#1%
                \accentfontxheight#11.15\dimen@
                $\m@th#1\widetilde{#2}$%
                \accentfontxheight#1\dimen@
        }%
}

\newcommand*\accentfontxheight[1]{%
        \fontdimen5\ifx#1\displaystyle
                \textfont
        \else\ifx#1\textstyle
                \textfont
        \else\ifx#1\scriptstyle
                \scriptfont
        \else
                \scriptscriptfont
        \fi\fi\fi3
}
\makeatother

\newcommand{\shse}[3]{0 ~\ra ~#1~ \lra ~#2~ \lra ~#3~ \ra~ 0}




\makeindex

\begin{document}

\title[Nestings of rational homogeneous varieties]{Nestings of rational homogeneous varieties}

\author[R. Mu\~noz]{Roberto Mu\~noz}
\address{Departamento de Matem\'atica Aplicada, ESCET, Universidad
Rey Juan Carlos, 28933-M\'ostoles, Madrid, Spain}
\email{roberto.munoz@urjc.es}
\author[Occhetta]{Gianluca Occhetta}
\address{Dipartimento di Matematica, Universit\`a di Trento, via
Sommarive 14 I-38123 Povo di Trento (TN), Italy} 
\email{gianluca.occhetta@unitn.it}
\author[Sol\'a Conde]{Luis E. Sol\'a Conde}
\address{Dipartimento di Matematica, Universit\`a di Trento, via
Sommarive 14 I-38123 Povo di Trento (TN), Italy}
\email{eduardo.solaconde@unitn.it}
\subjclass[2010]{Primary 14J45; Secondary 14E30, 14M15, 14M17}

\begin{abstract}
In this paper we study the existence of sections of universal bundles on rational homogeneous varieties -- called nestings -- classifying them completely on rational homogeneous varieties $G/P$ in the case where $G$ is a simple group of classical type and $P$ is a parabolic subgroup of $G$. In particular we show that, under this hypothesis, nestings do not exist unless there exists a proper algebraic subgroup of the automorphism group acting transitively on the base variety.
\end{abstract}

\maketitle

\section{Introduction}\label{sec:intro2}

Let $\G(k,n)$ be the Grassmannian of linear spaces of dimension $k$ in the projective space of dimension $n$.  In \cite{DCR}, De Concini and Reichstein studied the existence of morphisms $f: \G(k,n)\to \G(r,n)$,  $k<r$, mapping a $\P^k\subset\P^n$ to a $\P^r\subset\P^n$ containing it. They called such morphisms {\em nesting maps} and showed  that algebraic nestings only exist in the case $n$ odd and $\{k,r\}=\{0,n-1\}$. Moreover, such maps are described via the choice of an alternating form on the corresponding $(n+1)$-dimensional ambient vector space $V$: a one dimensional vector subspace of $V$ is mapped to its orthogonal complement with respect to the chosen form. 

The problem of describing nesting maps may be extended to different contexts; for instance, one could ask for the existence of nesting maps on prescribed subsets of $\G(k,n)$, as the sets parametrizing subspaces that are isotropic (or non isotropic) with respect to a given symmetric or skew-symmetric form in $V$.  Let us observe that to give a nesting map $f: \G(k,n)\to \G(r,n)$ is equivalent to giving a section of the first projection from the  flag variety $F_{k,r}=\{(\P^k,\P^r)\,|\, \P^k \subset \P^r\} \subset \G(k,n) \times  \G(r,n)$ to $\G(k,n)$. In other words, nesting maps are algebraic ways of completing $\P^k\in \G(k,n)$ to a flag $(\P^k\subset\P^r)$ in $F_{k,r}$. Seen in this way, the notion of nesting map makes perfect sense also in the case $k>r$ and, more generally, one could pose the question of the existence of sections of the natural projections between other flag varieties  of $\P^n$.
 
On the other hand, the problem can be extended to other types of varieties; 
Grassmannians and flag varieties are  examples of rational homogeneous varieties, that is, projective algebraic varieties on which a semisimple algebraic group $G$ acts transitively.  In the case of Grassmannians and flag varieties, $G$ is $\PGL(n+1)$ and there exist parabolic subgroups $P, P' \subset G$ such that $P \subset P' \subset G$,  $F_{k,r}=G/P$ and $\G(k,n)=G/P'$;  then a nesting map is nothing  but a section of the natural projection $G/P \to G/P'$. In this context, the concept of nesting map can be naturally generalized as follows: given $G$ a semisimple algebraic group, and $P \subset P' \subset G$ two parabolic subgroups, we define a {\it nesting} for $P \subset P' \subset G$ as a section of the natural projection $G/P \to G/P'$. The term nesting is justified in this setting by the fact that rational homogeneous varieties can always be seen as subvarieties of flag varieties of a suitable projective space, so a nesting can always be seen as a way of completing flags, as in the case considered by De Concini and Reichstein. 
 
 The goal of this paper is to give a complete description of the possible nestings when $G$ is simple of classical type (over the field of complex numbers). We will show that nestings are rather exceptional: besides the cases described in  \cite{DCR}, there is another infinite family,  and a sporadic case.
 
To describe the infinite family, let $Q^{2n-2}$ be an even dimensional quadric, and denote by $S_a$ and $S_b$ the two Spinor varieties, parametrizing the two families  of linear subspaces in $Q^{2n-2}$ of maximal dimension $(n-1)$.
The linear spaces of dimension $n-2$ contained in $Q^{2n-2}$ are parametrized by the incidence variety $\{(\P^{n-1}_a,\P^{n-1}_b)\in S_a \times S_b\,|\,\dim(\P^{n-1}_a \cap \P^{n-1}_b)=n-2\}$ and we have nestings parametrized by smooth hyperplane sections $H$ of $Q^{2n-2}$, sending $\P^{n-1}_a$ (resp. $\P^{n-1}_b$) to the flag $(\P^{n-1}_a \cap H \subset \P^{n-1}_a)$ (resp. $(\P^{n-1}_b \cap H \subset \P^{n-1}_b)$).

Finally, in the sporadic case, nestings are maps from a five dimensional quadric $Q^5$ to the flag of points and planes in $Q^5$, sending a point $p$ to a flag $(p, \pi)$, with $p \in \pi \subset Q^5$; these maps admit a geometric description in terms of complexified octonions (see Section \ref{ssec:exB3}).

To state the main result of the paper, let us recall first that, given a semisimple group $G$, a rational homogeneous variety $G/P'$ can be described by marking a subset $I$ of the set $D$ of nodes of the Dynkin diagram $\cD$ of $G$; we will thus denote the variety $G/P'$  by $\cD(I)$. Moreover, the parabolic subgroups $P \subset G$ contained in $P'$ correspond to the subsets of $D$ containing $I$ (see Section \ref{ssec:prelim1} for details).
In these terms, given a Dynkin diagram $\cD$, and two disjoint nonempty subsets $I,J$ of nodes of $\cD$, a {\em nesting of type $(\cD,I,J)$}, is a section of the projection $\cD(I\cup J)\to \cD(I)$, and the main theorem can be stated as follows:

\begin{theorem}\label{thm:main}
Let $G$ be a simple algebraic group whose associated Dynkin diagram $\cD$ is of classical type, and let $I,J$ be two disjoint nonempty sets of nodes of $\cD$ such that $(\cD,I,J)$ admits a nesting. Then  
$(\cD,I,J)$ is isomorphic to one of the following:
$$
(\DA_{2n-1},1,2n-1)\,\,\,\, \mbox{$n\geq 2$},\quad (\DB_3,1,3),\quad (\DD_n,n-1,n)\,\,\,\,n\geq 4.
$$
\end{theorem}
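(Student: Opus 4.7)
\emph{Reduction to $|J|=1$.} The plan is to first reduce to $|J|=1$, then to carry out a case-by-case analysis by Dynkin type. If $(\cD, I, J)$ admits a nesting $s\colon \cD(I) \to \cD(I \cup J)$, then for each $j \in J$ the composition of $s$ with the forgetful projection $\cD(I \cup J) \to \cD(I \cup \{j\})$ is itself a nesting, of type $(\cD, I, \{j\})$. Hence it suffices to classify triples $(\cD, I, \{j\})$ admitting a nesting; once this is done, one checks that in each surviving triple the node $j$ is uniquely determined by $(\cD, I)$, so that the conclusion $|J|=1$ is forced for the original triple $(\cD, I, J)$ as well.

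\emph{Geometric translation.} The projection $\pi\colon \cD(I \cup \{j\}) \to \cD(I)$ is a locally trivial $G$-equivariant fibration whose fiber is the rational homogeneous variety $\cD_I(\{j\})$, where $\cD_I$ is the subdiagram of $\cD$ obtained by removing the nodes in $I$. Depending on $(\cD, I, j)$, this fiber is a projective space, a Grassmannian, a quadric, or a spinor variety; correspondingly, a nesting is equivalent to a globally defined sub- or quotient-bundle (possibly with an isotropy constraint) of a universal bundle on $\cD(I)$: for a projective space fiber $\P^{k}$, a line sub- or quotient bundle of a natural rank-$(k+1)$ universal bundle; for a quadric fiber, an isotropic line in an orthogonal universal bundle; for a spinor fiber, a maximal isotropic subbundle of the prescribed ruling; and so on.

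\emph{Case analysis.} One walks through each classical Dynkin type and each pair $(I, j)$ and analyses whether the required sub- or quotient-bundle can exist. In type $\DA_n$ the universal bundles are sub- and quotient-bundles of $\cO^{n+1}$, and Chern class together with Picard group arguments rule out all possibilities outside $(\DA_{2n-1}, 1, 2n-1)$; in that exceptional case the section is furnished by a nondegenerate alternating form on $\C^{2n}$, recovering the result of \cite{DCR}. In type $\DC_n$ the symplectic compatibility imposed on the universal bundles turns out to be too rigid to admit any section. In types $\DB_n$ and $\DD_n$, nestings correspond to isotropic sub-bundles for the ambient orthogonal form, and the analysis isolates exactly the two cases $(\DB_3, 1, 3)$, realised via complexified octonions as in Section \ref{ssec:exB3}, and $(\DD_n, n-1, n)$, realised by the family of hyperplane sections of $Q^{2n-2}$ described in the Introduction.

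The main obstacle is the exhaustive exclusion in the case analysis: the uniform bundle-theoretic translation does not by itself preclude sections, and each elimination relies on diagram-specific input such as Chern class identities in the cohomology rings of flag varieties, Picard group constraints, or restrictions on representations of the Levi factors. A guiding principle that structures the argument, visible a posteriori in all three exceptional cases, is the equivalence between the existence of a nesting for $(\cD, I, J)$ and the existence of a proper algebraic subgroup of $\Aut(\cD(I))$ acting transitively on $\cD(I)$; by Onishchik's classification, such transitive actions occur precisely in the configurations responsible for the three families listed in the theorem.
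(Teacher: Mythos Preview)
Your reduction to $|J|=1$ is incomplete. You claim that once the single-node cases are classified, ``in each surviving triple the node $j$ is uniquely determined by $(\cD,I)$''. This fails for $\DD_4$: by triality, both $(\DD_4,3,4)$ and $(\DD_4,3,1)$ are isomorphic to $(\DD_n,n-1,n)$, so for $I=\{3\}$ there are two valid choices of $j$. The paper closes this gap with a separate geometric argument (Lemma~\ref{lem:triality}) excluding $(\DD_4,3,\{1,4\})$ directly; your outline contains nothing that would rule this case out.

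More seriously, you never reduce the size of $I$. Your ``case analysis'' is announced over all pairs $(I,j)$ with $I$ an arbitrary subset of nodes, which is an unbounded family, and you give no mechanism for handling it uniformly. The paper devotes Section~\ref{sec:reduc} to reducing first to $|I|=1$ (Proposition~\ref{prop:red2}) and then to $i$ extremal (Proposition~\ref{prop:red3}); both steps rely on restricting the nesting to fibers and to rational curves of class $\Gamma_{i_2}$, invoking the classification of nestings of rational homogeneous bundles over $\P^1$ (Proposition~\ref{prop:nestP1}), which in turn uses the tag description of such bundles and the explicit folding maps. None of this machinery appears in your proposal, and the bundle-theoretic translation you sketch does not substitute for it: knowing that a section corresponds to a subbundle of some universal bundle on $\cD(I)$ does not by itself bound $|I|$ or force $i$ to be extremal.

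Finally, even granting those reductions, your exclusion step is not a proof. The paper carries out explicit computations in the cohomology rings $\HH^\bullet(\cD(1,r))$ and $\HH^\bullet(\cD(r,n))$ (Tables~\ref{tab:gen}--\ref{tab:chern}), establishes the identity $P_{\cQ'}(t)P_{\cS'}(t)=1-(-1)^{\Cox(\cD)}t^{\Cox(\cD)}$ (Proposition~\ref{prop:decomp}), and applies the Schur-positivity lemmas of Section~\ref{ssec:lemmata} to force the listed outcomes. Invoking Onishchik's classification is not a replacement: the equivalence between nestings and smaller transitive groups is a \emph{consequence} of Theorem~\ref{thm:main}, not an input to its proof.
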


Furthermore, we show that nestings of type $(\cD,I,J)$ are tightly related to the existence of a smaller group acting transitively on  $\cD(I)$.  In fact,
in the case of $(\DA_{2n-1},1,2n-1)$, nestings are determined by 
the choice of a structure of $\PSP(2n)$-variety on $\P^{2n-1}$ (see \cite{DCR}). We prove that a similar property holds in the other two cases, namely, that  nestings of types $(\DB_3,1,3)$ (resp. $(\DD_n,n-1,n)$) are determined by the choice of a structure of $\DG_2$-variety of $\DB_3(1)$, (resp. a structure of $\PSO_{2n-1}$-variety of $\DD_n(n-1)$).
In this way we get a full description of the parameter spaces of nestings, $\Nest(\cD,I,J)$, in all cases. 

The fact that, for Dynkin diagrams  of classical type, a nesting of type $(\cD,I,J)$ exists if and only if $\cD(I)$ is homogeneous with respect to a smaller group --and the latter never happens for exceptional varieties-- 
leads us to speculate that there exist no nestings for the Dynkin diagrams of exceptional type. 
Let us note that, for $\cD=\DG_2$ this conjecture follows from the fact that the existence of a section of the projection $\DG_2(1,2) \to \DG_2(i)$, which is a $\P^1$-bundle, would imply the decomposability of the associated rank two vector bundle (cf. Proposition \ref{prop:G2nest}). On the other hand, in our proof of Theorem \ref{thm:main} we find cohomological obstructions to the existence of nestings using explicit presentations of the cohomology rings of the varieties involved in terms of the Chern classes of the associated universal bundles; these presentations  are reasonably manageable and uniform in the classical cases.
We have not considered the exceptional cases of type $\DE_n$, $n=6,7,8$ and $\DF_4$ because the presentations of their cohomology rings are considerably more involved (see, for instance \cite{DZ}).

We finally remark that the projections $G/P\to G/P'$ are natural generalizations of the projectivizations of the universal bundles on Grassmannians (see Example \ref{ex:univbund}  and Remark \ref{rem:univbund}). Then the existence of nestings for $P \subset P' \subset G$ can be thought of as a reducibility condition for those bundles (Corollary \ref{cor:main}).  As a consequence, we get necessary and sufficient conditions for the existence of subbundles of the universal quotient bundle $\cQ$ on rational homogeneous spaces of classical type and Picard number one (Corollary \ref{cor:subbundles}). This result can be viewed as a generalization of \cite[Corollary 1.6]{DCR}.

\medskip

\noindent{\bf Description of the contents of the paper.} Section \ref{sec:prelim} contains background material on rational homogeneous varieties and bundles, and a discussion of the applications of Theorem \ref{thm:main} to reducibility of universal flag bundles and existence of subbundles of universal vector bundles.
In the last part of the section we present some technical lemmas regarding Chern classes of nef vector bundles, that we will use in the proof of Theorem \ref{thm:main}. 

The definition of nesting, together with some examples and properties, are presented in Section \ref{sec:nesting}; in particular, for each choice of $\cD$, $I$ and $J$, we define a scheme $\Nest(\cD,I,J)$ parametrizing nestings of the corresponding type. The fact that the group $G$ adjoint to $\cD$ acts on it allows us to extend the concept of nesting to rational homogeneous bundles over algebraic varieties. We will use this more general notion in the case in which the base variety is $\P^1$ (see Propositions \ref{prop:red2} and  \ref{prop:red3}).
In Section \ref{sec:special} we describe completely $\Nest(\DA_{2n-1},1,2n-1)$, $\Nest(\DB_3,1,3)$, and $\Nest(\DD_n,n-1,n)$, and show that they are quasiprojective homogeneous varieties. The proof of Theorem \ref{thm:main} is contained in Sections \ref{sec:reduc} and  \ref{sec:main}. First of all we reduce the problem to the case in which $I$ and $J$ consist of precisely one node and $I$ is extremal (Section \ref{sec:reduc}); then we complete the proof in the last section by studying the missing cases through cohomological computations.

\section{Notation and preliminaries}\label{sec:prelim}

\subsection{Rational homogeneous varieties}\label{ssec:prelim1}

Throughout the paper we will work over the field of complex numbers. We will recall here some basic notions on algebraic groups and their Lie algebras (see \cite{Hum1,Hum2}), and  rational homogeneous varieties (see \cite[Section~23.3]{FH}, where they are simply called ``homogeneous spaces'', and \cite{OttNotes}), and introduce the notation that we will use further on when dealing with them. 

Consider $G$ a semisimple algebraic group, and fix a  Borel subgroup $B\subset G$ and a maximal torus  $H \subset B$, i.e., a Cartan subgroup of $G$. The lattice of  characters  (respectively co-characters) of $H$ will be denoted by $\Mo(H):=\Hom(H,\C^*)$ (respectively $\Na(H):=\Hom(\C^*,H)$), and the Weyl group of $G$, defined as the quotient ${\rm N}(H)/H$ of the normalizer ${\rm N}(H)$ of $H$ in $G$, will be denoted by $W$. 

The choice of $H$ and $B$ defines a root system $\Phi\subset\Mo(H)$ and a base of positive simple roots $\Delta=\{\alpha_i|\,\, 1 \leq i \leq n\}$; the integer $\rk(G):=n$ is called the rank   of the group $G$. The induced action of $W$ on $\Mo(H)$ stabilizes $\Phi$, and for every element $\alpha\in\Phi$ there exists an element $s_\alpha\in W$ (called reflection with respect to $\alpha$) satisfying $s_\alpha^2=1$, and $s_\alpha(\alpha)=-\alpha$; moreover $W$ is generated by the elements $s_i:=s_{\alpha_i}$, $i=1,\dots,n$.  The Lie algebra of $G$, denoted by $\fg$, is completely determined by the Dynkin diagram $\cD$ of $G$; whenever $\fg$ is simple, the set of nodes  of $\cD$, that we will denote by $D$, will be numbered as in \cite[p.~58]{Hum3} (see Figure \ref{eq:dynkin} below). If $\cD$ is of type $\DA_n$, $\DB_n$, $\DC_n$ or $\DD_n$, $\fg$ and $G$ will be called of {\it classical type}. The other connected diagrams are called {\it exceptional}.

\def\lun{1.8}

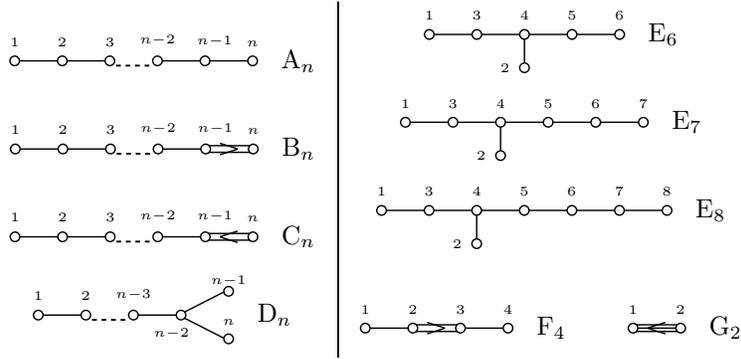
\begin{figure}[h!]
\begin{equation*}
\begin{array}{c|c}
\ifx\du\undefined
  \newlength{\du}
\fi
\setlength{\du}{\lun\unitlength}
\begin{tikzpicture}
\pgftransformxscale{1.000000}
\pgftransformyscale{1.000000}

\definecolor{dialinecolor}{rgb}{0.000000, 0.000000, 0.000000} 
\pgfsetstrokecolor{dialinecolor}
\definecolor{dialinecolor}{rgb}{0.000000, 0.000000, 0.000000} 
\pgfsetfillcolor{dialinecolor}


\pgfsetlinewidth{0.300000\du}
\pgfsetdash{}{0pt}
\pgfsetdash{}{0pt}

\pgfpathellipse{\pgfpoint{-6\du}{0\du}}{\pgfpoint{1\du}{0\du}}{\pgfpoint{0\du}{1\du}}
\pgfusepath{stroke}
\node at (-6\du,0\du){};

\pgfpathellipse{\pgfpoint{4\du}{0\du}}{\pgfpoint{1\du}{0\du}}{\pgfpoint{0\du}{1\du}}
\pgfusepath{stroke}
\node at (4\du,0\du){};

\pgfpathellipse{\pgfpoint{14\du}{0\du}}{\pgfpoint{1\du}{0\du}}{\pgfpoint{0\du}{1\du}}
\pgfusepath{stroke}
\node at (14\du,0\du){};

\pgfpathellipse{\pgfpoint{24\du}{0\du}}{\pgfpoint{1\du}{0\du}}{\pgfpoint{0\du}{1\du}}
\pgfusepath{stroke}
\node at (24\du,0\du){};

\pgfpathellipse{\pgfpoint{34\du}{0\du}}{\pgfpoint{1\du}{0\du}}{\pgfpoint{0\du}{1\du}}
\pgfusepath{stroke}
\node at (34\du,0\du){};

\pgfpathellipse{\pgfpoint{44\du}{0\du}}{\pgfpoint{1\du}{0\du}}{\pgfpoint{0\du}{1\du}}
\pgfusepath{stroke}
\node at (44\du,0\du){};

\pgfsetlinewidth{0.300000\du}
\pgfsetdash{}{0pt}
\pgfsetdash{}{0pt}
\pgfsetbuttcap

{\draw (-5\du,0\du)--(3\du,0\du);}
{\draw (5\du,0\du)--(13\du,0\du);}
{\draw (25\du,0\du)--(33\du,0\du);}
{\draw (35\du,0\du)--(43\du,0\du);}



\pgfsetlinewidth{0.400000\du}
\pgfsetdash{{1.000000\du}{1.000000\du}}{0\du}
\pgfsetdash{{1.000000\du}{1.000000\du}}{0\du}
\pgfsetbuttcap
{\draw (15.3\du,-1\du)--(23\du,-1\du);}

\node[anchor=west] at (48\du,0\du){${\rm A}_n$};

\node[anchor=south] at (-6\du,1.1\du){$\scriptscriptstyle 1$};

\node[anchor=south] at (4\du,1.1\du){$\scriptscriptstyle 2$};

\node[anchor=south] at (14\du,1.1\du){$\scriptscriptstyle 3$};

\node[anchor=south] at (24\du,1.1\du){$\scriptscriptstyle n-2$};

\node[anchor=south] at (36\du,1\du){$\scriptscriptstyle n-1$};

\node[anchor=south] at (44\du,1.1\du){$\scriptscriptstyle n$};

\end{tikzpicture} &\ifx\du\undefined
  \newlength{\du}
\fi
\setlength{\du}{\lun\unitlength}
\begin{tikzpicture}
\pgftransformxscale{1.000000}
\pgftransformyscale{1.000000}

\definecolor{dialinecolor}{rgb}{0.000000, 0.000000, 0.000000} 
\pgfsetstrokecolor{dialinecolor}
\definecolor{dialinecolor}{rgb}{0.000000, 0.000000, 0.000000} 
\pgfsetfillcolor{dialinecolor}


\pgfsetlinewidth{0.300000\du}
\pgfsetdash{}{0pt}
\pgfsetdash{}{0pt}

\pgfpathellipse{\pgfpoint{-6\du}{0\du}}{\pgfpoint{1\du}{0\du}}{\pgfpoint{0\du}{1\du}}
\pgfusepath{stroke}
\node at (-6\du,0\du){};

\pgfpathellipse{\pgfpoint{14\du}{-7\du}}{\pgfpoint{1\du}{0\du}}{\pgfpoint{0\du}{1\du}}
\pgfusepath{stroke}
\node at (14\du,-7\du){};

\pgfpathellipse{\pgfpoint{4\du}{0\du}}{\pgfpoint{1\du}{0\du}}{\pgfpoint{0\du}{1\du}}
\pgfusepath{stroke}
\node at (4\du,0\du){};

\pgfpathellipse{\pgfpoint{14\du}{0\du}}{\pgfpoint{1\du}{0\du}}{\pgfpoint{0\du}{1\du}}
\pgfusepath{stroke}
\node at (14\du,0\du){};

\pgfpathellipse{\pgfpoint{24\du}{0\du}}{\pgfpoint{1\du}{0\du}}{\pgfpoint{0\du}{1\du}}
\pgfusepath{stroke}
\node at (24\du,0\du){};

\pgfpathellipse{\pgfpoint{34\du}{0\du}}{\pgfpoint{1\du}{0\du}}{\pgfpoint{0\du}{1\du}}
\pgfusepath{stroke}
\node at (34\du,0\du){};

\pgfsetlinewidth{0.300000\du}
\pgfsetdash{}{0pt}
\pgfsetdash{}{0pt}
\pgfsetbuttcap

{\draw (-5\du,0\du)--(3\du,0\du);}
{\draw (5\du,0\du)--(13\du,0\du);}
{\draw (15\du,0\du)--(23\du,0\du);}
{\draw (25\du,0\du)--(33\du,0\du);}
{\draw (14\du,-1\du)--(14\du,-6\du);}

\node[anchor=west] at (38\du,0\du){${\rm E}_6$};

\node[anchor=south] at (-6\du,1.1\du){$\scriptscriptstyle 1$};

\node[anchor=south] at (4\du,1.1\du){$\scriptscriptstyle 3$};

\node[anchor=south] at (14\du,1.1\du){$\scriptscriptstyle 4$};

\node[anchor=south] at (24\du,1.1\du){$\scriptscriptstyle 5$};

\node[anchor=south] at (34\du,1.1\du){$\scriptscriptstyle 6$};

\node[anchor=east] at (12.9\du,-7\du){$\scriptscriptstyle 2$};

\end{tikzpicture} \\
\ifx\du\undefined
  \newlength{\du}
\fi
\setlength{\du}{\lun\unitlength}
\begin{tikzpicture}
\pgftransformxscale{1.000000}
\pgftransformyscale{1.000000}

\definecolor{dialinecolor}{rgb}{0.000000, 0.000000, 0.000000} 
\pgfsetstrokecolor{dialinecolor}
\definecolor{dialinecolor}{rgb}{0.000000, 0.000000, 0.000000} 
\pgfsetfillcolor{dialinecolor}


\pgfsetlinewidth{0.300000\du}
\pgfsetdash{}{0pt}
\pgfsetdash{}{0pt}

\pgfpathellipse{\pgfpoint{-6\du}{0\du}}{\pgfpoint{1\du}{0\du}}{\pgfpoint{0\du}{1\du}}
\pgfusepath{stroke}
\node at (-6\du,0\du){};

\pgfpathellipse{\pgfpoint{4\du}{0\du}}{\pgfpoint{1\du}{0\du}}{\pgfpoint{0\du}{1\du}}
\pgfusepath{stroke}
\node at (4\du,0\du){};

\pgfpathellipse{\pgfpoint{14\du}{0\du}}{\pgfpoint{1\du}{0\du}}{\pgfpoint{0\du}{1\du}}
\pgfusepath{stroke}
\node at (14\du,0\du){};

\pgfpathellipse{\pgfpoint{24\du}{0\du}}{\pgfpoint{1\du}{0\du}}{\pgfpoint{0\du}{1\du}}
\pgfusepath{stroke}
\node at (24\du,0\du){};

\pgfpathellipse{\pgfpoint{34\du}{0\du}}{\pgfpoint{1\du}{0\du}}{\pgfpoint{0\du}{1\du}}
\pgfusepath{stroke}
\node at (34\du,0\du){};

\pgfpathellipse{\pgfpoint{44\du}{0\du}}{\pgfpoint{1\du}{0\du}}{\pgfpoint{0\du}{1\du}}
\pgfusepath{stroke}
\node at (44\du,0\du){};

\pgfsetlinewidth{0.300000\du}
\pgfsetdash{}{0pt}
\pgfsetdash{}{0pt}
\pgfsetbuttcap

{\draw (-5\du,0\du)--(3\du,0\du);}
{\draw (5\du,0\du)--(13\du,0\du);}
{\draw (25\du,0\du)--(33\du,0\du);}
{\draw (34.65\du,0.7\du)--(43.35\du,0.7\du);}
{\draw (34.65\du,-0.7\du)--(43.35\du,-0.7\du);}


{\pgfsetcornersarced{\pgfpoint{0.300000\du}{0.300000\du}}\definecolor{dialinecolor}{rgb}{0.000000, 0.000000, 0.000000}
\pgfsetstrokecolor{dialinecolor}
\draw (37\du,-1.2\du)--(40.8\du,0\du)--(37\du,1.2\du);}

\pgfsetlinewidth{0.400000\du}
\pgfsetdash{{1.000000\du}{1.000000\du}}{0\du}
\pgfsetdash{{1.000000\du}{1.000000\du}}{0\du}
\pgfsetbuttcap
{\draw (15.3\du,-1\du)--(23\du,-1\du);}

\node[anchor=west] at (48\du,0\du){${\rm B}_n$};

\node[anchor=south] at (-6\du,1.1\du){$\scriptscriptstyle 1$};

\node[anchor=south] at (4\du,1.1\du){$\scriptscriptstyle 2$};

\node[anchor=south] at (14\du,1.1\du){$\scriptscriptstyle 3$};

\node[anchor=south] at (24\du,1.1\du){$\scriptscriptstyle n-2$};

\node[anchor=south] at (36\du,1.1\du){$\scriptscriptstyle n-1$};

\node[anchor=south] at (44\du,1.1\du){$\scriptscriptstyle n$};

\end{tikzpicture} &\ifx\du\undefined
  \newlength{\du}
\fi
\setlength{\du}{\lun\unitlength}
\begin{tikzpicture}
\pgftransformxscale{1.000000}
\pgftransformyscale{1.000000}

\definecolor{dialinecolor}{rgb}{0.000000, 0.000000, 0.000000} 
\pgfsetstrokecolor{dialinecolor}
\definecolor{dialinecolor}{rgb}{0.000000, 0.000000, 0.000000} 
\pgfsetfillcolor{dialinecolor}


\pgfsetlinewidth{0.300000\du}
\pgfsetdash{}{0pt}
\pgfsetdash{}{0pt}

\pgfpathellipse{\pgfpoint{-6\du}{0\du}}{\pgfpoint{1\du}{0\du}}{\pgfpoint{0\du}{1\du}}
\pgfusepath{stroke}
\node at (-6\du,0\du){};

\pgfpathellipse{\pgfpoint{14\du}{-7\du}}{\pgfpoint{1\du}{0\du}}{\pgfpoint{0\du}{1\du}}
\pgfusepath{stroke}
\node at (14\du,-7\du){};

\pgfpathellipse{\pgfpoint{4\du}{0\du}}{\pgfpoint{1\du}{0\du}}{\pgfpoint{0\du}{1\du}}
\pgfusepath{stroke}
\node at (4\du,0\du){};

\pgfpathellipse{\pgfpoint{14\du}{0\du}}{\pgfpoint{1\du}{0\du}}{\pgfpoint{0\du}{1\du}}
\pgfusepath{stroke}
\node at (14\du,0\du){};

\pgfpathellipse{\pgfpoint{24\du}{0\du}}{\pgfpoint{1\du}{0\du}}{\pgfpoint{0\du}{1\du}}
\pgfusepath{stroke}
\node at (24\du,0\du){};

\pgfpathellipse{\pgfpoint{34\du}{0\du}}{\pgfpoint{1\du}{0\du}}{\pgfpoint{0\du}{1\du}}
\pgfusepath{stroke}
\node at (34\du,0\du){};

\pgfpathellipse{\pgfpoint{44\du}{0\du}}{\pgfpoint{1\du}{0\du}}{\pgfpoint{0\du}{1\du}}
\pgfusepath{stroke}
\node at (44\du,0\du){};

\pgfsetlinewidth{0.300000\du}
\pgfsetdash{}{0pt}
\pgfsetdash{}{0pt}
\pgfsetbuttcap

{\draw (-5\du,0\du)--(3\du,0\du);}
{\draw (5\du,0\du)--(13\du,0\du);}
{\draw (15\du,0\du)--(23\du,0\du);}
{\draw (25\du,0\du)--(33\du,0\du);}
{\draw (35\du,0\du)--(43\du,0\du);}
{\draw (14\du,-1\du)--(14\du,-6\du);}

\node[anchor=west] at (48\du,0\du){${\rm E}_7$};

\node[anchor=south] at (-6\du,1.1\du){$\scriptscriptstyle 1$};

\node[anchor=south] at (4\du,1.1\du){$\scriptscriptstyle 3$};

\node[anchor=south] at (14\du,1.1\du){$\scriptscriptstyle 4$};

\node[anchor=south] at (24\du,1.1\du){$\scriptscriptstyle 5$};

\node[anchor=south] at (34\du,1.1\du){$\scriptscriptstyle 6$};

\node[anchor=south] at (44\du,1.1\du){$\scriptscriptstyle 7$};

\node[anchor=east] at (12.9\du,-7\du){$\scriptscriptstyle 2$};

\end{tikzpicture} \\
\ifx\du\undefined
  \newlength{\du}
\fi
\setlength{\du}{\lun\unitlength}
\begin{tikzpicture}
\pgftransformxscale{1.000000}
\pgftransformyscale{1.000000}

\definecolor{dialinecolor}{rgb}{0.000000, 0.000000, 0.000000} 
\pgfsetstrokecolor{dialinecolor}
\definecolor{dialinecolor}{rgb}{0.000000, 0.000000, 0.000000} 
\pgfsetfillcolor{dialinecolor}


\pgfsetlinewidth{0.300000\du}
\pgfsetdash{}{0pt}
\pgfsetdash{}{0pt}

\pgfpathellipse{\pgfpoint{-6\du}{0\du}}{\pgfpoint{1\du}{0\du}}{\pgfpoint{0\du}{1\du}}
\pgfusepath{stroke}
\node at (-6\du,0\du){};

\pgfpathellipse{\pgfpoint{4\du}{0\du}}{\pgfpoint{1\du}{0\du}}{\pgfpoint{0\du}{1\du}}
\pgfusepath{stroke}
\node at (4\du,0\du){};

\pgfpathellipse{\pgfpoint{14\du}{0\du}}{\pgfpoint{1\du}{0\du}}{\pgfpoint{0\du}{1\du}}
\pgfusepath{stroke}
\node at (14\du,0\du){};

\pgfpathellipse{\pgfpoint{24\du}{0\du}}{\pgfpoint{1\du}{0\du}}{\pgfpoint{0\du}{1\du}}
\pgfusepath{stroke}
\node at (24\du,0\du){};

\pgfpathellipse{\pgfpoint{34\du}{0\du}}{\pgfpoint{1\du}{0\du}}{\pgfpoint{0\du}{1\du}}
\pgfusepath{stroke}
\node at (34\du,0\du){};

\pgfpathellipse{\pgfpoint{44\du}{0\du}}{\pgfpoint{1\du}{0\du}}{\pgfpoint{0\du}{1\du}}
\pgfusepath{stroke}
\node at (44\du,0\du){};

\pgfsetlinewidth{0.300000\du}
\pgfsetdash{}{0pt}
\pgfsetdash{}{0pt}
\pgfsetbuttcap

{\draw (-5\du,0\du)--(3\du,0\du);}
{\draw (5\du,0\du)--(13\du,0\du);}
{\draw (25\du,0\du)--(33\du,0\du);}
{\draw (34.65\du,0.7\du)--(43.35\du,0.7\du);}
{\draw (34.65\du,-0.7\du)--(43.35\du,-0.7\du);}


{\pgfsetcornersarced{\pgfpoint{0.300000\du}{0.300000\du}}\definecolor{dialinecolor}{rgb}{0.000000, 0.000000, 0.000000}
\pgfsetstrokecolor{dialinecolor}
\draw (40.8\du,-1.2\du)--(37\du,0\du)--(40.8\du,1.2\du);}

\pgfsetlinewidth{0.400000\du}
\pgfsetdash{{1.000000\du}{1.000000\du}}{0\du}
\pgfsetdash{{1.000000\du}{1.00000\du}}{0\du}
\pgfsetbuttcap
{\draw (15.3\du,-1\du)--(23\du,-1\du);}

\node[anchor=west] at (48\du,0\du){${\rm C}_n$};

\node[anchor=south] at (-6\du,1.1\du){$\scriptscriptstyle 1$};

\node[anchor=south] at (4\du,1.1\du){$\scriptscriptstyle 2$};

\node[anchor=south] at (14\du,1.1\du){$\scriptscriptstyle 3$};

\node[anchor=south] at (24\du,1.1\du){$\scriptscriptstyle n-2$};

\node[anchor=south] at (36\du,1.1\du){$\scriptscriptstyle n-1$};

\node[anchor=south] at (44\du,1.1\du){$\scriptscriptstyle n$};

\end{tikzpicture} &\ifx\du\undefined
  \newlength{\du}
\fi
\setlength{\du}{\lun\unitlength}
\begin{tikzpicture}
\pgftransformxscale{1.000000}
\pgftransformyscale{1.000000}

\definecolor{dialinecolor}{rgb}{0.000000, 0.000000, 0.000000} 
\pgfsetstrokecolor{dialinecolor}
\definecolor{dialinecolor}{rgb}{0.000000, 0.000000, 0.000000} 
\pgfsetfillcolor{dialinecolor}


\pgfsetlinewidth{0.300000\du}
\pgfsetdash{}{0pt}
\pgfsetdash{}{0pt}

\pgfpathellipse{\pgfpoint{-6\du}{0\du}}{\pgfpoint{1\du}{0\du}}{\pgfpoint{0\du}{1\du}}
\pgfusepath{stroke}
\node at (-6\du,0\du){};

\pgfpathellipse{\pgfpoint{14\du}{-7\du}}{\pgfpoint{1\du}{0\du}}{\pgfpoint{0\du}{1\du}}
\pgfusepath{stroke}
\node at (14\du,-7\du){};

\pgfpathellipse{\pgfpoint{4\du}{0\du}}{\pgfpoint{1\du}{0\du}}{\pgfpoint{0\du}{1\du}}
\pgfusepath{stroke}
\node at (4\du,0\du){};

\pgfpathellipse{\pgfpoint{14\du}{0\du}}{\pgfpoint{1\du}{0\du}}{\pgfpoint{0\du}{1\du}}
\pgfusepath{stroke}
\node at (14\du,0\du){};

\pgfpathellipse{\pgfpoint{24\du}{0\du}}{\pgfpoint{1\du}{0\du}}{\pgfpoint{0\du}{1\du}}
\pgfusepath{stroke}
\node at (24\du,0\du){};

\pgfpathellipse{\pgfpoint{34\du}{0\du}}{\pgfpoint{1\du}{0\du}}{\pgfpoint{0\du}{1\du}}
\pgfusepath{stroke}
\node at (34\du,0\du){};

\pgfpathellipse{\pgfpoint{44\du}{0\du}}{\pgfpoint{1\du}{0\du}}{\pgfpoint{0\du}{1\du}}
\pgfusepath{stroke}
\node at (44\du,0\du){};

\pgfpathellipse{\pgfpoint{54\du}{0\du}}{\pgfpoint{1\du}{0\du}}{\pgfpoint{0\du}{1\du}}
\pgfusepath{stroke}
\node at (54\du,0\du){};

\pgfsetlinewidth{0.300000\du}
\pgfsetdash{}{0pt}
\pgfsetdash{}{0pt}
\pgfsetbuttcap

{\draw (-5\du,0\du)--(3\du,0\du);}
{\draw (5\du,0\du)--(13\du,0\du);}
{\draw (15\du,0\du)--(23\du,0\du);}
{\draw (25\du,0\du)--(33\du,0\du);}
{\draw (35\du,0\du)--(43\du,0\du);}
{\draw (45\du,0\du)--(53\du,0\du);}
{\draw (14\du,-1\du)--(14\du,-6\du);}

\node[anchor=west] at (58\du,0\du){${\rm E}_8$};

\node[anchor=south] at (-6\du,1.1\du){$\scriptscriptstyle 1$};

\node[anchor=south] at (4\du,1.1\du){$\scriptscriptstyle 3$};

\node[anchor=south] at (14\du,1.1\du){$\scriptscriptstyle 4$};

\node[anchor=south] at (24\du,1.1\du){$\scriptscriptstyle 5$};

\node[anchor=south] at (34\du,1.1\du){$\scriptscriptstyle 6$};

\node[anchor=south] at (44\du,1.1\du){$\scriptscriptstyle 7$};

\node[anchor=south] at (54\du,1.1\du){$\scriptscriptstyle 8$};

\node[anchor=east] at (12.9\du,-7\du){$\scriptscriptstyle 2$};

\end{tikzpicture} \\\ifx\du\undefined
  \newlength{\du}
\fi
\setlength{\du}{\lun\unitlength}
\begin{tikzpicture}
\pgftransformxscale{1.000000}
\pgftransformyscale{1.000000}

\definecolor{dialinecolor}{rgb}{0.000000, 0.000000, 0.000000} 
\pgfsetstrokecolor{dialinecolor}
\definecolor{dialinecolor}{rgb}{0.000000, 0.000000, 0.000000} 
\pgfsetfillcolor{dialinecolor}


\pgfsetlinewidth{0.300000\du}
\pgfsetdash{}{0pt}
\pgfsetdash{}{0pt}

\pgfpathellipse{\pgfpoint{-6\du}{0\du}}{\pgfpoint{1\du}{0\du}}{\pgfpoint{0\du}{1\du}}
\pgfusepath{stroke}
\node at (-6\du,0\du){};

\pgfpathellipse{\pgfpoint{4\du}{0\du}}{\pgfpoint{1\du}{0\du}}{\pgfpoint{0\du}{1\du}}
\pgfusepath{stroke}
\node at (4\du,0\du){};

\pgfpathellipse{\pgfpoint{14\du}{0\du}}{\pgfpoint{1\du}{0\du}}{\pgfpoint{0\du}{1\du}}
\pgfusepath{stroke}
\node at (14\du,0\du){};

\pgfpathellipse{\pgfpoint{24\du}{0\du}}{\pgfpoint{1\du}{0\du}}{\pgfpoint{0\du}{1\du}}
\pgfusepath{stroke}
\node at (24\du,0\du){};

\pgfpathellipse{\pgfpoint{34\du}{5\du}}{\pgfpoint{1\du}{0\du}}{\pgfpoint{0\du}{1\du}}
\pgfusepath{stroke}
\node at (34\du,5\du){};

\pgfpathellipse{\pgfpoint{34\du}{-5\du}}{\pgfpoint{1\du}{0\du}}{\pgfpoint{0\du}{1\du}}
\pgfusepath{stroke}
\node at (34\du,-5\du){};

\pgfsetlinewidth{0.300000\du}
\pgfsetdash{}{0pt}
\pgfsetdash{}{0pt}
\pgfsetbuttcap

{\draw (-5\du,0\du)--(3\du,0\du);}
{\draw (15\du,0\du)--(23\du,0\du);}
{\draw (24.65\du,0.6\du)--(33.1\du,4.9\du);}
{\draw (24.65\du,-0.6\du)--(33.1\du,-4.9\du);}



\pgfsetlinewidth{0.400000\du}
\pgfsetdash{{1.000000\du}{1.000000\du}}{0\du}
\pgfsetdash{{1.000000\du}{1.000000\du}}{0\du}
\pgfsetbuttcap
{\draw (5.3\du,-1\du)--(13\du,-1\du);}

\node[anchor=west] at (38\du,0\du){${\rm D}_n$};

\node[anchor=south] at (-6\du,1.1\du){$\scriptscriptstyle 1$};

\node[anchor=south] at (4\du,1.1\du){$\scriptscriptstyle 2$};

\node[anchor=south] at (14\du,1.1\du){$\scriptscriptstyle n-3$};

\node[anchor=south] at (22\du,-7.1\du){$\scriptscriptstyle n-2$};

\node[anchor=south] at (34\du,4.1\du){$\scriptscriptstyle n-1$};

\node[anchor=south] at (34\du,-4.1\du){$\scriptscriptstyle n$};

\end{tikzpicture} &\ifx\du\undefined
  \newlength{\du}
\fi
\setlength{\du}{\lun\unitlength}
\begin{tikzpicture}
\pgftransformxscale{1.000000}
\pgftransformyscale{1.000000}

\definecolor{dialinecolor}{rgb}{0.000000, 0.000000, 0.000000} 
\pgfsetstrokecolor{dialinecolor}
\definecolor{dialinecolor}{rgb}{0.000000, 0.000000, 0.000000} 
\pgfsetfillcolor{dialinecolor}


\pgfsetlinewidth{0.300000\du}
\pgfsetdash{}{0pt}
\pgfsetdash{}{0pt}

\pgfpathellipse{\pgfpoint{-6\du}{0\du}}{\pgfpoint{1\du}{0\du}}{\pgfpoint{0\du}{1\du}}
\pgfusepath{stroke}
\node at (-6\du,0\du){};

\pgfpathellipse{\pgfpoint{4\du}{0\du}}{\pgfpoint{1\du}{0\du}}{\pgfpoint{0\du}{1\du}}
\pgfusepath{stroke}
\node at (4\du,0\du){};

\pgfpathellipse{\pgfpoint{14\du}{0\du}}{\pgfpoint{1\du}{0\du}}{\pgfpoint{0\du}{1\du}}
\pgfusepath{stroke}
\node at (14\du,0\du){};

\pgfpathellipse{\pgfpoint{24\du}{0\du}}{\pgfpoint{1\du}{0\du}}{\pgfpoint{0\du}{1\du}}
\pgfusepath{stroke}
\node at (24\du,0\du){};

\pgfsetlinewidth{0.300000\du}
\pgfsetdash{}{0pt}
\pgfsetdash{}{0pt}
\pgfsetbuttcap

{\draw (-5\du,0\du)--(3\du,0\du);}
{\draw (15\du,0\du)--(23\du,0\du);}
{\draw (4.65\du,0.7\du)--(13.35\du,0.7\du);}
{\draw (4.65\du,-0.7\du)--(13.35\du,-0.7\du);}


{\pgfsetcornersarced{\pgfpoint{0.300000\du}{0.300000\du}}\definecolor{dialinecolor}{rgb}{0.000000, 0.000000, 0.000000}
\pgfsetstrokecolor{dialinecolor}
\draw (7\du,-1.2\du)--(10.8\du,0\du)--(7\du,1.2\du);}

\node[anchor=west] at (28\du,0\du){${\rm F}_4$};

\node[anchor=south] at (-6\du,1.1\du){$\scriptscriptstyle 1$};

\node[anchor=south] at (4\du,1.1\du){$\scriptscriptstyle 2$};

\node[anchor=south] at (14\du,1.1\du){$\scriptscriptstyle 3$};

\node[anchor=south] at (24\du,1.1\du){$\scriptscriptstyle 4$};

\end{tikzpicture} \hspace{0.6cm}
\ifx\du\undefined
  \newlength{\du}
\fi
\setlength{\du}{\lun\unitlength}
\begin{tikzpicture}
\pgftransformxscale{1.000000}
\pgftransformyscale{1.000000}

\definecolor{dialinecolor}{rgb}{0.000000, 0.000000, 0.000000} 
\pgfsetstrokecolor{dialinecolor}
\definecolor{dialinecolor}{rgb}{0.000000, 0.000000, 0.000000} 
\pgfsetfillcolor{dialinecolor}


\pgfsetlinewidth{0.300000\du}
\pgfsetdash{}{0pt}
\pgfsetdash{}{0pt}

\pgfpathellipse{\pgfpoint{-6\du}{0\du}}{\pgfpoint{1\du}{0\du}}{\pgfpoint{0\du}{1\du}}
\pgfusepath{stroke}
\node at (-6\du,0\du){};

\pgfpathellipse{\pgfpoint{4\du}{0\du}}{\pgfpoint{1\du}{0\du}}{\pgfpoint{0\du}{1\du}}
\pgfusepath{stroke}
\node at (4\du,0\du){};

\pgfsetlinewidth{0.300000\du}
\pgfsetdash{}{0pt}
\pgfsetdash{}{0pt}
\pgfsetbuttcap

{\draw (-5\du,0\du)--(3\du,0\du);}
{\draw (-5.35\du,0.7\du)--(3.35\du,0.7\du);}
{\draw (-5.35\du,-0.7\du)--(3.35\du,-0.7\du);}


{\pgfsetcornersarced{\pgfpoint{0.300000\du}{0.300000\du}}\definecolor{dialinecolor}{rgb}{0.000000, 0.000000, 0.000000}
\pgfsetstrokecolor{dialinecolor}
\draw (0.8\du,-1.2\du)--(-3\du,0\du)--(0.8\du,1.2\du);}

\node[anchor=west] at (8\du,0\du){${\rm G}_2$};

\node[anchor=south] at (-6\du,1.1\du){$\scriptscriptstyle 1$};

\node[anchor=south] at (4\du,1.1\du){$\scriptscriptstyle 2$};

\end{tikzpicture} 
\end{array}
\end{equation*}
\caption{Numbering of connected Dynkin diagrams. }\label{eq:dynkin}
\end{figure}

\medskip

Given $\cD$, there exists precisely one semisimple group $G$, called the adjoint group of $\cD$,  whose lattice of characters coincides with the one generated by $\Phi$. This group is the image of any other semisimple group with Lie algebra $\fg$ via its adjoint representation. The rest of the semisimple groups with Lie algebra $\fg$ are isogenous to the adjoint group. 
For instance, the adjoint group of type $\DA_n$ is the projective linear group $\PGL(n+1)$, and the adjoint groups of types $\DB_n$, $\DC_n$ and $\DD_n$ are the images of the natural maps:
$$
\SO(2n+1)\to\PGL(2n+1),\quad \SP(2n)\to\PGL(2n),\quad\SO(2n)\to\PGL(2n),
$$
denoted, respectively,  by $\PSO(2n+1)$, $\PSP(2n)$, and $\PSO(2n)$. Note that, in the case $\DB_n$, the map $\SO(2n+1)\to\PSO(2n+1)$ is an isomorphism.   

For every subset of nodes $I \subset D$  one can construct a parabolic subgroup $P(D \setminus I) \subset G$ as $P(D\setminus I)=BW(D \setminus I)B$, where $W(D \setminus I)$ is the subgroup of $W$ generated by the reflections $s_i$ associated with the indices $i \notin I$. The quotient $G/P(D \setminus I)$, called a rational homogeneous variety, depends only on the Dynkin diagram $\cD$ (that is, on the Lie algebra $\fg$) and on the set of nodes $I\subset D$. Thus the variety $G/P(D \setminus I)$ is commonly represented by the Dynkin diagram $\cD$ marked in the nodes of $I$,  and we will write $$\cD(I):=G/P(D \setminus I).$$ Sometimes, given a set of nodes $I=\{i_1, \dots, i_s\}$, we will write $\cD(i_1, \dots, i_s):=\cD(I)$. Any projective quotient of $G$, or of any other semisimple  group $G'$ isogenous to $G$, is isomorphic to one of these varieties. For $I=D$ we get $\cD(D)=G/P(\emptyset)=G/B$, known as the complete flag variety associated with $G$. On the other hand $\cD(\emptyset)=G/G$ is a point. In the cases in which $\cD$ is disconnected, a rational homogeneous variety $\cD(I)$ is a product, whose factors correspond to the connected components $\cD_i$ of $\cD$, marked on the nodes of $I$ contained in $\cD_i$. 

\begin{remark}\label{rem:special}
Restricting ourselves to the case in which $\cD$ is connected,  a rational homogeneous variety determines uniquely a marked Dynkin diagram $\cD(I)$, up to isomorphism, with three exceptions (cf. \cite[Ch.~3, Theorem 2]{Akh}): projective spaces of dimension $2n-1$ ($n\geq 2$), the spinor varieties parametrizing linear subspaces of maximal dimension on a $(2n-2)$-dimensional quadric $(n\geq 4$), and the smooth $5$-dimensional quadric $Q^5$. In fact we may write:
$$\begin{array}{c}\vspace{0.2cm}
\P^{2n-1}=\DA_{2n-1}(1)=\DA_{2n-1}(2n-1)=\DC_{n}(1),\\\vspace{0.2cm} \DD_n(n-1)=\DD_n(n)=\DB_{n-1}(n-1),\qquad
Q^5=\DB_3(1)=\DG_2(1).
\end{array}
$$ 
\end{remark}

Given two nonempty disjoint subsets of nodes $I,J$, the inclusion $I \subset  I\cup J $ defines a morphism $\pi_{I\cup J,I}:\cD(I\cup J) \to\cD(I)$ (called unmarking of $J$), that can be shown to be a smooth fiber type contraction; the fibers of $\pi_{I\cup J,I}$ are rational homogeneous varieties of type $\ol{\cD}(J)$, where $\ol{\cD}$ denotes the Dynkin diagram obtained from $\cD$ by deleting the nodes of $I$. In the particular case of $J=\{j\}$, $I=D \setminus \{j\}$, one gets an elementary contraction $\rho_j:=\pi_{D,D\setminus\{j\}}:\cD(D) \to \cD(D \setminus \{j\})$, whose fibers are isomorphic to $\P^1$. Denoting by $\Gamma_j$ the numerical class of the fibers of $\rho_j$, and by $K_j$ the corresponding relative canonical bundle, for every $j\in D$, the matrix of intersections $(-K_i\cdot \Gamma_j)$ is equal to the Cartan matrix of $G$ (cf. \cite[Proposition 3]{MOSW}), which encodes the information of the adjacencies of $\cD$. The nodes $i$ and $j$ are joined by $(-K_j \cdot \Gamma_i)(-K_i \cdot \Gamma_ j)$ edges, and when two nodes $i$ and $j$ are joined by a double or triple edge, we add to it an arrow, pointing to $i$ if $-K_i\cdot \Gamma_j >-K_j\cdot \Gamma_i$.

The vector space $\NU(G/B)$ of real linear combinations of line bundles modulo numerical equivalence on the complete flag variety $G/B$ is isomorphic to the $\rk(G)$--dimensional vector space $\Mo(H)\otimes_\Z\R$, spanned by the characters. Following the Shephard--Todd--Chevalley Theorem (cf. \cite[Theorem~3.1]{Hi82}), within the symmetric algebra $S\NU(G/B)$, the subalgebra of $W$-invariant polynomials $S\NU(G/B)^W$ is a polynomial ring, generated by $\rk(G)$ algebraically independent homogeneous polynomials, whose degrees, called fundamental degrees of $G$ (see Table \ref{tab:fundeg}), depend only on $\cD$. The maximum of the fundamental degrees of $G$ is known as its Coxeter number; we will denote the Coxeter number of $\cD$ by $\Cox(\cD)$.
\renewcommand*{\arraystretch}{1.1}
\begin{table}[h!]
\centering
\begin{tabular}{r|l}
\hline
Group type&Fundamental degrees\\\hline
$\DA_n$&
$2,3,\dots,n,n+1$\\
$\DB_n$,$\DC_n$&
$2,4,\dots,2(n-1),2n$\\
$\DD_n$&
$2,4,\dots,2(n-1),n$\\
$\DE_6$&
$2,5,6,8,9,12$\\
$\DE_7$&
$2,6,8,10,12,14,18$\\
$\DE_8$&
$2,8,12,14,18,20,24,30$\\
$\DF_4$&
$2,6,8,12$\\
$\DG_2$&
$2,6$\\\hline
\end{tabular} 
\caption{Fundamental degrees of the semisimple algebraic groups with simple Lie algebra.}
\label{tab:fundeg}
\end{table}

Furthermore, the well known Bern\v{s}te\u{\i}n--Gel'fand--Gel'fand theorem (cf. \cite{BGG}) tells us that the real cohomology ring $$\HH^\bullet(G/B):=\HH^\bullet(G/B,\R)$$ can be written as a quotient of $S\NU(G/B)$ by the ideal generated by homogeneous $W$-invariant polynomials of positive degree, $S\NU(G/B)^W_+\subset S\NU(G/B)^W$, (see \cite[Section~3]{MOS6} and the reference therein). This leads also to a description of the cohomology rings $\HH^\bullet(\cD(I)):=\HH^\bullet(\cD(I),\R)$, which are the invariant subalgebras of $\HH^\bullet(G/B)$ by the action of the subgroups $W(D\setminus I)$. In Section \ref{sec:main} we will use explicit presentations (cf. Tables \ref{tab:gen} and \ref{tab:pres})  
of these cohomology rings, for certain choices of $\cD(I)$, to complete the proof of Theorem \ref{thm:main}. 

\subsection{Rational homogeneous bundles}

Let $X$ be a complex algebraic variety, that we will assume to be simply connected  (which is the case, for instance, whenever $X$ is rational homogeneous or, more generally, a smooth Fano variety), and $F$ be a rational homogeneous variety. Denoting by $G$ the identity component of the automorphism group of $F$, which is known to be semisimple (cf. \cite[Thm. 3.11]{Huck}), and by $\cD$ the Dynkin diagram of $G$, we may write, as in the previous section, $F=\cD(I)$, for a set of nodes $I$ of $\cD$. A smooth morphism $\pi:Y \to X$ such that all its fibers are isomorphic to $F$ is called an $F$-bundle or, in general, a rational homogeneous bundle over $X$. Following a theorem of Fischer and Grauert (see \cite[p. 29]{BPVV}), $\pi$ is locally trivial in the analytic topology, hence, the simple connectedness of $X$ implies that $\pi$ is determined by a $1$-cocycle $\theta \in \HH^1(X,G)$, where $\HH^1(X,G)$ denotes the {\v C}ech cohomology of the  sheafified group $G$ on the analytic space associated with $X$.  

Conversely, any cocycle $\theta\in \HH^1(X,G)$  defines a $G$-principal bundle $\pi_G:\cE\to X$; given any set of nodes $J$ of $\cD$, it leads to a $\cD(J)$-bundle on $X$, by setting $$Y_J:=\cE\times^G\cD(J)=(\cE\times \cD(J))/\sim,\qquad (e,gP(D\setminus J))\sim(eh,h^{-1}gP(D\setminus J)),\,\,\forall h\in G. $$ 
The projection $\pi_J:Y_J\to X$ is defined as the natural map sending the class of $(e,gP(D\setminus J))$ to $\pi_G(e)$. If $\theta$ is the cocycle defined by a $\cD(I)$-bundle $\pi:Y\to X$ as above, then the bundle $\pi_I:Y_I\to X$ coincides with it. 

Furthermore, this construction is compatible with the contractions of rational homogeneous varieties described in the previous sections: given two disjoint sets of nodes $I,J\subset D$, the inclusion $I\subset I\cup J$, together with the cocycle $\theta$ provides a contraction: 
$$
\pi_{I\cup J,I}:Y_{I\cup J}\to Y_I 
$$
satisfying $\pi_I\circ\pi_{I\cup J,I}=\pi_{I\cup J}$. Fiberwise over points of $X$, $\pi_{I\cup J,I}$ is equal to the contraction $\cD(I\cup J)\to \cD(I)$, hence $\pi_{I\cup J,I}$ is a $\ol{\cD}(J)$-bundle, where $\ol{\cD}$ denotes the Dynkin diagram obtained from $\cD$ by deleting the nodes of $I$. 

In particular, all the bundles constructed upon the cocycle $\theta$ can be obtained via contractions of the $\cD(D)$-bundle $\pi_D:Y_D\to X$, so that, denoting $\rho_{D\setminus I}:=\pi_{D,I}$, we have commutative diagrams:
$$
\xymatrix{Y\ar@/^1pc/[rr]^{\pi_D}\ar[r]_{\rho_{D\setminus I}}&Y_I\ar[r]_{\pi_I}&X}
$$
When we do not want to specify the particular diagram $\cD$ we are considering, we will usually refer to a $\cD(D)$-bundle as a flag bundle over $X$; for instance, the contractions $\rho_{D\setminus I}$ are flag bundles over $Y_I$.

Particularly interesting examples of rational homogeneous bundles are the so-called universal bundles, obtained by considering contractions of rational homogeneous varieties: 

\begin{example}\label{ex:univbund} 
Let $\cD$ be a Dynkin diagram, $Y:=\cD(D)$ be the corresponding complete flag variety, $I \subset D$ be a subset of nodes of $\cD$, and $X:=\cD(I)$ be the corresponding rational homogeneous variety. The contraction $\pi:=\rho_{D\setminus I}:Y \to X$, corresponding to unmarking the nodes in $D\setminus I$, is a flag bundle: its fibers are complete flag varieties with respect to the Dynkin diagram $\ol{\cD}$, obtained from $\cD$ by deleting the nodes of $I$. Let us write $D\setminus I=D_1\cup\dots\cup D_k$, where each $D_s$ denotes the set of nodes of a connected component $\ol{\cD}_s$ of $\ol{\cD}$, so that the contraction $\rho_{D\setminus I}$ equals the fiber product of the contractions $$\pi_{I\cup D_s,I}:\cD(I\cup D_s) \to X.$$ The contractions $\pi_{I\cup D_s,I}$,  $s=1,\dots,k$, whose fibers are complete flag varieties $\ol{\cD}_s(D_s)$, are called {\em universal flag bundles} over $X$. Moreover, for every subset $J_s\subset D_s$, the corresponding intermediate contraction $$\pi_{I\cup J_s,I}:\cD(I\cup J_s) \to X$$ will be called a {\em universal rational homogeneous bundle} associated with $X$. 
\end{example}

\begin{remark}\label{rem:univbund} 
In the case in which $\cD(I)=\DA_n(r)$, $r\in \{1,\dots,n\}$, that is in the case in which $\cD(I)$ is a Grassmannian of $\P^{r-1}$'s in $\P^n=\P(V)$, the universal flag bundles are the complete flag bundles associated to the  projectivizations of the corresponding universal quotient bundle $\cQ$ and universal subbundle $\cS$, fitting in the 
short exact sequence:
$$
\shse{\cS^\vee}{\cO_{\DA_n(r)}\otimes V}{\cQ}.
$$
If $\cD(I)=\DB_n(r),\DC_n(r),\DD_n(r)$, universal flag bundles can be constructed as follows.
Let $V$ be the natural representation of a Lie algebra of type $\cD = \DB_n$, $\DC_n$, $\DD_n$ (which has dimension $N=2n+1,2n$, and $2n$, respectively), and let $\P(V)$ be its 
projectivization. 
Every variety $\cD(r)$ can be embedded in a Grassmannian $\G(k,\P(V))$ of $k$-dimensional projective subspaces in $\P(V)$, where $k=r-1$ if $\cD(r) \not = \DD_n(n-1)$ and $k=n-1$ if $\cD(r) = \DD_n(n-1)$. 
We may then consider the restrictions to $\cD(r)$ of the corresponding universal bundles (abusing notation, we denote them by $\cQ$ and $\cS$).  In the cases of $\DB_n$, $\DD_n$ (respectively, $\DC_n$), $V$ supports a nondegenerate quadratic (respectively, skew-symmetric) form $\omega:V^\vee\to V$, with respect to which the vector subspaces parametrized by $\cD(r)$ are isotropic. In other words, the composition:
$$
\cQ^\vee\lra\cO_{\cD(r)}\otimes V^\vee\stackrel{\omega}{\lra} \cO_{\cD(r)}\otimes V\lra \cQ
$$
is zero, so we have an exact sequence 
\begin{equation}\label{eq:KSQ}
\shse{\cK}{\cS}{\cQ}
\end{equation}
and the form $\omega$ defines a symmetric (resp. skew-symmetric) isomorphism $\cK\simeq \cK^\vee$ (see \cite[Section 3.3]{MOS6}). One of the universal flag bundles on $\cD(r)$ is the complete flag bundle associated to $\P(\cQ)$; the other is, in each case, a flag bundle of type $\DB_{n-r}$, $\DC_{n-r}$ or $\DD_{n-r}$, constructed upon the $\P(\cK)$ and the nondegenerate symmetric or antisymmetric form defined above.
\end{remark}

We finish this section by discussing the consequences of Theorem \ref{thm:main} on the reducibility of universal flag bundles. Let us recall the following definition (see \cite[Section~3]{MOS5} for further details):

\begin{definition}\label{def:reducible}
A $\cD(D)$-bundle $\pi_D:Y_D\to X$ over an algebraic variety $X$ 
is called {\it reducible with respect to }$I\subset D$ if the corresponding $\cD(I)$-bundle $\pi_I:Y_I\to X$ admits a section $s_I:X\to Y_I$.
\end{definition}
\begin{remark}\label{rem:reducible} 
A $\cD(D)$-bundle as above is reducible if and only if its defining cocycle $\theta\in \HH^1(X,G)$ lies in the image of the natural map $\HH^1(X,P(D\setminus I))\to \HH^1(X,G)$. In the case in which $G=\SL(V)$, parabolic subgroups are subgroups of block-triangular matrices, hence saying that a $\cD(D)$-bundle is reducible with respect to some $I\neq\emptyset$ is equivalent to saying that the corresponding vector bundle $\cE\times^GV$ obtained upon the natural representation $V$ of $\SL(V)$ admits a nontrivial subbundle. 
\end{remark}

This observation already allows us to study the existence of nestings in the cases in which $\cD(I\cup J)\to \cD(I)$ is a $\P^1$-bundle. For instance, we may write the following statement: 

\begin{proposition}\label{prop:G2nest}
There exist no nestings of homogeneous $\DG_2$-varieties.
\end{proposition}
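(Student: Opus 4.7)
The plan is to exploit the hint in the paragraph preceding the statement. The Dynkin diagram $\DG_2$ has only two nodes, so every candidate nesting is a section of one of the two elementary contractions $\pi_j : \DG_2(1,2) \to \DG_2(j)$ for $j \in \{1,2\}$. Both projections have fibers of class $\Gamma_{3-j}$, isomorphic to $\P^1$, hence are $\P^1$-bundles. Using simple connectedness of the base, we may write $\pi_j = \P(\cE_j)$ for some rank-$2$ vector bundle $\cE_j$ on $\DG_2(j)$, and a section is equivalent to a short exact sequence
\[
\shse{M}{\cE_j}{L}
\]
with $M$, $L$ line bundles.

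First I would verify the vanishing $\HH^1(\DG_2(j), \cO(k)) = 0$ for every $k \in \Z$: for $\DG_2(1) = Q^5$ this follows from standard cohomology of the quadric embedded in $\P^6$, while for $\DG_2(2)$ it comes from a Borel--Weil--Bott computation on $\DG_2/P_2$. Since $\Pic(\DG_2(j)) \iso \Z$, the sequence above then splits and $\cE_j \iso M \oplus L$. I would then contradict transitivity of $G = \DG_2$ on the flag variety $\DG_2(1,2)$. When $M \not\iso L$, the summands are distinguished by their first Chern numbers, so the decomposition is canonical and preserved by the $G$-action, producing two disjoint $G$-invariant sections $\P(M), \P(L) \subset \DG_2(1,2)$, contradicting transitivity. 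When $M \iso L$, then $\DG_2(1,2) \iso \DG_2(j) \times \P^1$, forcing $\dim \Aut^0(\DG_2(1,2)) \geq \dim \Aut^0(\DG_2(j)) + 3 > 14$, in contradiction with the fact that $\Aut^0(\DG_2(1,2)) = \DG_2$.

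The main obstacle is the $\HH^1$-vanishing for line bundles on $\DG_2(2)$, which cannot be reduced to any standard hypersurface computation and requires a weight-theoretic Borel--Weil--Bott calculation specific to the exceptional type $\DG_2$. An alternative that avoids this is to invoke Ottaviani's theorem on the indecomposability of the Cayley bundle on $Q^5$ for the case $j = 1$, and for $j = 2$ to rule out splitting of $\cE_2$ directly by a Chern-class obstruction: the discriminant $c_1(\cE_2)^2 - 4c_2(\cE_2)$ should fail to equal a non-negative perfect square multiple of $h^2$ in $\HH^\bullet(\DG_2(2))$, where $h$ is the ample generator of the Picard group.
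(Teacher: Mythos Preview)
Your argument is correct and follows the same overall architecture as the paper's proof: realise $\DG_2(1,2)\to\DG_2(j)$ as $\P(\cE_j)$ for a rank two bundle, observe that a section yields a short exact sequence of line bundles, use an $H^1$-vanishing to split $\cE_j$, and then derive a contradiction from the splitting. The two proofs differ only in how the last two steps are carried out.

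For the vanishing, the paper simply invokes Kodaira vanishing (together with Serre duality) on a Fano variety of Picard number one; this handles both $\DG_2(1)$ and $\DG_2(2)$ uniformly and bypasses the Borel--Weil--Bott computation you flag as the main obstacle. It is worth adopting.

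For the contradiction from $\cE_j\simeq M\oplus L$, the paper argues via the Mori cone: writing $\cE_j\simeq H^{\otimes a}\oplus H^{\otimes b}$, the variety $\P(\cO\oplus H^{\otimes(b-a)})$ has a second fibre-type contraction only when $a=b$, and in that case the target is $\P^1$, not the five-dimensional $\DG_2(3-j)$. Your route via transitivity of $\DG_2$ on $\DG_2(1,2)$ and a dimension count on $\Aut^0$ is equally valid and arguably more conceptual. One small imprecision: in the case $M\not\simeq L$ only the section corresponding to the quotient of \emph{minimal} degree is guaranteed to be unique in its numerical class, hence $G$-invariant (the other section may move in a positive-dimensional family). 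But a single $G$-invariant proper closed subvariety already contradicts transitivity, so the conclusion stands.
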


\begin{proof}
The three homogeneous $\DG_2$-varieties fit in the following diagram:
$$
\xymatrix{&\DG_2(1,2)\ar[rd]^{\rho_1}\ar[ld]_{\rho_2}&\\\DG_2(1)&&\DG_2(2)}
$$
where the two contractions are $\P^1$-bundles, so that we may write $\DG_2(1,2)\simeq\P(\cC)\simeq\P(\cQ)$, where $\cC$ is the Cayley bundle on $\DG_2(1)\simeq Q^5$, and $\cQ$ is the universal bundle on $\DG_2(2)$ (see \cite{O,MOS3}). The existence of a nesting of type $(\DG_2,1,2)$ 
would translate into the reducibility of $\rho_2$, 
that is into the existence of a line subbundle  $\cL\hookrightarrow\cC$. Since $\DG_2(1)$ is Fano of Picard number one, Kodaira vanishing theorem would tell us that $\cC$ would split as $\cL\oplus\cL'$, where $\cL'$ denotes the cokernel of the inclusion of $\cL$ into $\cC$; this contradicts the fact that  $\rho_1:\P(\cC)\to \DG_2(2)$ is a $\P^1$-bundle.
In fact, since  $\Pic(\DG_2(1))=\Z H$, for an ample line bundle $H$, we could write $\cC\simeq H^{\otimes a} \oplus H^{\otimes b}$, for $a\leq b\in \Z$, and $\DG_2(1,2)\simeq \P(\cO\oplus H^{\otimes(b-a)})$, which could only have a fiber type contraction different from $\rho_2$ in the case $b=a$, that is, if $\DG_2(1,2)\simeq\P^1\times \DG_2(1)$. 

A similar argument holds  for the nestings of type $(\DG_2,2,1)$. 
\end{proof}

In the language of rational homogeneous bundles, the main result of this paper, Theorem \ref{thm:main}, reads as follows:

\begin{corollary}\label{cor:main}
Let $\cD(I)$ be a rational homogeneous space of classical type, and $\pi:Y=\cD(I\cup D_s)\to \cD(I)$ be a universal flag bundle on it. Then $Y$ is reducible if and only if $(\cD,I)$ is isomorphic to  one of the following:
$$
(\DA_{2n-1},1)\,\,\,\, \mbox{$n\geq 2$},\quad (\DB_3,1),\quad (\DD_n,n) \,\,\,\,n\geq 4.
$$
\end{corollary}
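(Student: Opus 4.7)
The plan is to obtain the corollary as an essentially formal consequence of Theorem \ref{thm:main}, once reducibility is translated into the existence of a nesting. The only piece of bookkeeping comes from the outer symmetry of $\DD_n$.

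First, I would unfold Definition \ref{def:reducible} for our setting. The bundle $\pi: Y = \cD(I \cup D_s) \to \cD(I)$ has as structure group the simple group with connected Dynkin diagram $\ol{\cD}_s$ and nodes $D_s$; accordingly, $Y$ is reducible precisely when there exists a nonempty proper subset $J \subset D_s$ such that the intermediate projection $\pi_{I \cup J, I}: \cD(I \cup J) \to \cD(I)$ admits a section. By the discussion in Section \ref{sec:nesting}, such a section is exactly a nesting of type $(\cD, I, J)$. Hence reducibility of $Y$ is equivalent to the existence of a nesting of type $(\cD, I, J)$ for some nonempty $J \subset D_s$.

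Second, I would invoke Theorem \ref{thm:main}. Under the hypothesis that $\cD$ is of classical type, such a nesting exists only when $(\cD, I, J)$ is isomorphic to $(\DA_{2n-1}, 1, 2n-1)$ with $n \geq 2$, $(\DB_3, 1, 3)$, or $(\DD_n, n-1, n)$ with $n \geq 4$. Reading off the pair $(\cD, I)$ yields $(\DA_{2n-1}, 1)$, $(\DB_3, 1)$, and $(\DD_n, n-1)$. The last of these matches the $(\DD_n, n)$ appearing in the corollary: Remark \ref{rem:special} provides the isomorphism $\DD_n(n-1) \cong \DD_n(n)$, and the outer automorphism of $\DD_n$ swapping the nodes $n-1$ and $n$ converts a nesting of type $(\DD_n, n-1, n)$ into one of type $(\DD_n, n, n-1)$.

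For the converse, I would point to the explicit nestings described in Section \ref{sec:special} (alternating form for $\DA_{2n-1}$, complexified octonions for $\DB_3$, and smooth hyperplane section of the even-dimensional quadric for $\DD_n$); each furnishes a section of an intermediate projection $\cD(I \cup J) \to \cD(I)$, witnessing reducibility in the sense of Definition \ref{def:reducible}. Since the entire substance is in Theorem \ref{thm:main}, the main (and mild) obstacle here is simply matching the node labels of $\DD_n$ under its outer automorphism.
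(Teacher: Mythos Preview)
Your argument is correct and matches the paper's treatment: the corollary is stated there as a direct reformulation of Theorem \ref{thm:main}, and you have spelled out the translation via Definition \ref{def:reducible} together with the outer automorphism of $\DD_n$. One small imprecision: in unfolding Definition \ref{def:reducible} you require $J\subsetneq D_s$, whereas $J=D_s$ is also allowed (reducibility with respect to the full set $D_s$ means the flag bundle itself has a section); this is harmless, since Lemma \ref{lem:red1} shows that a section for $J=D_s$ yields one for any singleton $\{j\}\subsetneq D_s$, and Theorem \ref{thm:main} applies equally well to $J=D_s$.
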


In particular, we may apply this result to the universal rational homogeneous bundles on varieties of type $\cD(r)$ with $\cD$ of classical type (see Remark \ref{rem:univbund}), generalizing  \cite[Corollary 1.6]{DCR}.

\begin{corollary}\label{cor:subbundles}
Let $X$ be a rational homogeneous variety equal to one of the following $$\DA_n(r),\DB_n(r), \DC_n(r),\mbox{ or }\DD_n(r),$$ for some $r$. The universal quotient bundle $\cQ$ has a nontrivial subbundle if and only if $X=\DA_n(n),\DD_n(n)$, or $\DD_n(n-1)$. 
\end{corollary}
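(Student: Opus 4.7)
The plan is to derive this corollary directly from Corollary~\ref{cor:main} (equivalently, from Theorem~\ref{thm:main}), by translating the existence of nontrivial subbundles of $\cQ$ into the existence of nestings. The underlying fact is that a rank-$k$ subbundle of $\cQ$ on $X = \cD(r)$ provides a section of the associated Grassmann bundle $\mathbf{G}_k(\cQ) \to X$, and in the relevant cases this Grassmann bundle coincides with a rational homogeneous bundle $\cD(\{r\}\cup J) \to X$, so that the section becomes a nesting.

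First I would handle $X = \DA_n(r)$: here $\mathbf{G}_k(\cQ) = \DA_n(r,r+k)$, so a rank-$k$ subbundle of $\cQ$ is precisely a nesting of type $(\DA_n, \{r\}, \{r+k\})$. By Theorem~\ref{thm:main}, such a nesting exists iff $(n,r,k) = (2m-1,1,2m-2)$ for some $m\geq 2$; in view of the identification $\DA_{2m-1}(1) = \DA_{2m-1}(2m-1)$ from Remark~\ref{rem:special}, this is precisely the case $X = \DA_n(n)$.

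Next I would address the spinor varieties $X = \DD_n(n)$ and $\DD_n(n-1)$: here Remark~\ref{rem:univbund} gives $\cK = 0$ (the Lagrangian situation) and thus $\cQ \cong \cS$ via the orthogonal form, so a line subbundle of $\cQ$ corresponds to a quotient of $\cS^\vee$, equivalently a section of the $\P^{n-1}$-bundle $\DD_n(n-1,n) \to \DD_n(n)$. This is a nesting of type $(\DD_n, n-1, n)$, which exists by Theorem~\ref{thm:main}.

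For the remaining cases, any nontrivial subbundle of $\cQ$ would produce (through the Grassmann bundle / nesting correspondence sketched above) a nesting forbidden by Theorem~\ref{thm:main}. The main subtle point I expect to be the hardest is the case $X = \DB_3(1)= Q^5$: Theorem~\ref{thm:main} does produce a nesting of type $(\DB_3, 1, 3)$, but the associated bundle $\DB_3(1,3) \to \DB_3(1)$ parametrizes maximal isotropic planes through a point, rather than arbitrary subspaces of the fiber $V/V_1$ of $\cQ$; hence this nesting does not lift to a subbundle of $\cQ$ on $Q^5$, which explains the absence of $\DB_3(1)$ from the list. Carrying out this bookkeeping uniformly, and in particular distinguishing between those Grassmann bundles of $\cQ$ which coincide with $\cD$-type rational homogeneous bundles and those which do not, is the bulk of the work.
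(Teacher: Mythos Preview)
Your overall strategy---translate subbundles of $\cQ$ into sections of the associated Grassmann bundle, identify that bundle as a contraction $\cD(\{r,j\})\to\cD(r)$, and invoke Theorem~\ref{thm:main}---is exactly the paper's (implicit) route via Corollary~\ref{cor:main} and Remark~\ref{rem:reducible}. But you have the convention for $\cQ$ backwards, and this causes a real error, not just a cosmetic one. In the paper (see Remark~\ref{rem:univbund} and Table~\ref{tab:chern}), $\cQ$ on $\cD(r)$ has rank $r$ (rank $n$ for $\DD_n(n-1)$), and its complete flag bundle is the universal $\DA_{r-1}$-flag bundle on the component $\{1,\dots,r-1\}$ of $D\setminus\{r\}$; a rank-$k$ subbundle of $\cQ$ therefore corresponds to a nesting $(\cD,r,j)$ with $j<r$, not $j=r+k$.

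Your treatment of $\DB_3(1)$ shows why this matters. Under your convention $\cQ$ would have fibre $V/V_1$ of rank $6$, and then the nesting $\sigma:\DB_3(1)\to\DB_3(1,3)$ \emph{does} produce a rank-$2$ subbundle of $\cQ$, namely the bundle with fibre $V_3/V_1$ coming from the isotropic $V_3\supset V_1$ determined by $\sigma$; so your assertion that ``this nesting does not lift to a subbundle of $\cQ$'' is false, and under your convention the corollary itself would fail. With the paper's convention the picture is much simpler: on $\DB_3(1)$ one has $r=1$, so $\cQ$ is a line bundle and has no nontrivial subbundle, while the nesting $(\DB_3,1,3)$ lives on the $\DB_2$-component (the $\cK$-side of Remark~\ref{rem:univbund}), not on the $\cQ$-side. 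Similarly, in the $\DA_n$ case you should obtain $r=n$ directly (nesting $(\DA_n,n,1)$ with $n$ odd), rather than landing on $r=1$ and then invoking Remark~\ref{rem:special}; that identification exchanges $\cQ$ and $\cS$, so it does not preserve the question.
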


\begin{remark}
Furthermore, a description of the nontrivial subbundles $\cQ'\subset\cQ$ in each case follows from the description of the corresponding nestings, which we will present in section \ref{sec:special}. For instance, we will see that the rank of any such $\cQ'$ is $n-1$ in the case of $\DA_n(n)$, and $1$ in the case of  $\DD_n(n),\DD_n(n-1)$.
\end{remark}

\subsection{Rational homogeneous bundles on ${\bm \P^1}$}\label{ssec:groth}

Rational homogeneous bundles over the projective line $\P^1$ are completely described by Grothendieck's Theorem, which implies that these bundles can be described by some discrete data. We summarize here how to describe them by means of tagged Dynkin diagrams, referring to \cite[Section 3]{OSW} for details and proofs.

With the notation introduced in the previous sections, given a flag bundle $\pi:Y \to \P^1$ and denoting by $H\subset G$ a maximal torus, Grothendieck's Theorem asserts that the natural map $\HH^1(\P^1,H)\to \HH^1(\P^1,G)$ is surjective, so the cocycle in $ \HH^1(\P^1,G)$ determining $Y$ is the image of an element $\theta \in \HH^1(\P^1,H)$.
Moreover, considering the exponential map from $\fh$ to $H$, whose kernel is $\Na(H)$, since $\HH^1(\P^1,\fh)=0$, we have:
$$
\HH^1(\P^1,H)\cong \HH^2(\P^1,\Na(H))\cong\Na(H),
$$
so we can think of $\theta$ as a co-character of $H$.
 
The choice of a Borel subgroup $B \supset H$ determines a cocycle in $\HH^1(\P^1,B)$ which is the image of $\theta$, and thus a section $\Gamma_0$ of $\pi:Y\to \P^1$.
 The section $\Gamma_0$ is a minimal section of $\pi$, i.e., a section  that cannot be deformed in $Y$ with a point fixed, for a choice of $B$ corresponding to the choice of a base $\Delta=\{\alpha_i,\,\,i=1,\dots,n\}$ of positive simple roots of $G$ such that  $d_i:=-\theta(\alpha_i)\geq 0$, for every $i$. The choice of $B$ may not be unique, but one may show that the numerical class of minimal sections is unique.  

The principal bundle given by $\theta$ (and so the corresponding rational homogeneous bundles) is then determined by the $n$-tuple $\delta(\theta)=(d_1, \dots, d_n)$, which we call {\em tag of the bundle}. We usually represent it by the Dynkin diagram $\cD$ labeled with the integer $d_i$ on the node corresponding to $\alpha_i$, for every $i$.
The tag can be given a geometric interpretation in terms of intersection numbers with $\Gamma_0$, as follows: 

\begin{proposition}\label{prop:tag}
Let $\pi:Y\to \P^1$ be a $\cD(D)$-bundle associated with a cocycle $\theta \in \HH^1(\P^1,H)$, and let $\Gamma_0$ be a minimal section of $\pi$ over $\P^1$. 
The tag  $\delta(\theta)=(d_1, \dots, d_n)$ is given by the intersection numbers $K_i \cdot\Gamma_0$ where the $K_i$'s ($i = 1,\dots, n$) are the relative canonical divisors of the elementary contractions of $Y$ over $\P^1$.
\end{proposition}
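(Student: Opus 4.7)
The plan is to compute both sides of the claimed identity using the decomposition of $\operatorname{Pic}(Y)$ into line bundles indexed by $\Mo(H)$. Concretely, the proof reduces to two assertions: (a) for any character $\chi\in\Mo(H)$ the associated line bundle $L_\chi^Y$ on $Y$ satisfies $L_\chi^Y\cdot\Gamma_0=\chi(\theta)$; and (b) the relative anticanonical divisor of $\rho_i$ is $-K_i=L_{\alpha_i}^Y$, where $\alpha_i$ is the $i$-th simple root. Granted (a) and (b), one concludes $K_i\cdot\Gamma_0=-\alpha_i(\theta)=d_i$ directly from the definition of the tag.

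For step (a), I would recall that the minimal section $\Gamma_0$ arises from the reduction of the $G$-principal bundle $P_G=P_H\times^H G$ associated with $\theta$ to a $B$-bundle $P_B=P_H\times^H B$ via the inclusion $H\subset B$; the compatibility condition $d_i\geq 0$ recalled in the paper ensures that this canonical section is minimal in the numerical sense. For each $\chi\in\Mo(H)$ one has a $G$-equivariant line bundle $L_\chi$ on $G/B$ which globalizes via the associated-bundle construction to a line bundle $L_\chi^Y$ on $Y$; the pullback of $L_\chi^Y$ to $\Gamma_0\cong\P^1$ is the line bundle $P_H\times^H\C_\chi$, whose degree is the integer $\chi(\theta)$ defined by the composition $\C^*\xrightarrow{\theta}H\xrightarrow{\chi}\C^*$.

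For step (b), I would work on the flag variety $G/B$ and use the duality $L_{\omega_k}\cdot\Gamma_j=\delta_{kj}$ between fundamental-weight line bundles and Schubert curves, together with the Cartan matrix identity $-K_i\cdot\Gamma_j=\langle\alpha_i,\alpha_j^\vee\rangle$ recalled in Section~\ref{ssec:prelim1}; writing $-K_i=\sum_k c_k L_{\omega_k}$ and comparing with the expansion $\alpha_i=\sum_k\langle\alpha_i,\alpha_k^\vee\rangle\omega_k$ yields $-K_i=L_{\alpha_i}$ on $G/B$, which propagates to $Y$ by the naturality of the associated-bundle construction with respect to $G$-equivariant line bundles. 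The main obstacle is not the structural argument but the careful tracking of sign conventions, both in the identification of $L_\chi^Y|_{\Gamma_0}$ with a line bundle of degree $\chi(\theta)$ (as opposed to $-\chi(\theta)$) and in the identification of $-K_i$ with $L_{\alpha_i}$ (as opposed to $L_{-\alpha_i}$); both conventions are pinned down simultaneously by compatibility with the Cartan matrix formula $(-K_i\cdot\Gamma_j)=A_{ij}$ and with the definition $d_i=-\theta(\alpha_i)$ given earlier in the section.
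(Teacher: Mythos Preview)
The paper does not give its own proof of this proposition: the surrounding paragraph explicitly refers to \cite[Section~3]{OSW} ``for details and proofs,'' and the statement is followed immediately by the next subsection. So there is nothing to compare against in the paper itself.

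Your argument is correct and is essentially the standard one. Step~(a) is the key computation: the section $\Gamma_0$ is by construction the image of $P_B\times^B\{eB\}$ inside $Y=P_G\times^G(G/B)$, and since the fibre of $L_\chi$ at $eB$ is the one-dimensional $B$-module $\C_\chi$, the restriction $L_\chi^Y|_{\Gamma_0}$ is indeed $P_B\times^B\C_\chi\cong P_H\times^H\C_\chi$, of degree $\langle\chi,\theta\rangle$. Step~(b) is a well-known fact about $G/B$ (the relative tangent of the $\P^1$-fibration $\rho_i$ is the homogeneous line bundle attached to $\alpha_i$), and your verification via the Cartan matrix identity $-K_i\cdot\Gamma_j=\langle\alpha_i,\alpha_j^\vee\rangle$ together with $L_{\omega_k}\cdot\Gamma_j=\delta_{kj}$ is a clean way to pin down both the weight and the sign simultaneously. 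Your caveat about sign conventions is well placed: once one fixes the convention so that the Cartan matrix formula of Section~\ref{ssec:prelim1} holds and so that $d_i=-\theta(\alpha_i)$, the two signs are forced and the computation $K_i\cdot\Gamma_0=-L_{\alpha_i}^Y\cdot\Gamma_0=-\alpha_i(\theta)=d_i$ goes through.
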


\subsection{Some lemmas on the Chern classes of nef vector bundles}\label{ssec:lemmata}

We conclude Section \ref{sec:prelim} by stating and proving some technical results on the cohomology of nef vector bundles on varieties of Picard number one, that we will use in the proof of Theorem \ref{thm:main}. Given a vector bundle $\cE$ on a projective variety $X$, we will denote by $\P(\cE)$ its projectivization, defined as $$\P(\cE):=\Proj\left(\bigoplus_{d>0}S^d\cE\right),$$ cf. \cite[II, Section 7]{Ha}, so that the push-forward to $X$ of the tautological line bundle $\cO_{\P(\cE)}(1)$ on $\P(\cE)$ is $\cE$.  Along this section  $X$ will denote a  projective variety such that $\Pic(X)\simeq \Z\langle H\rangle$,  
with $H$ ample.  Let us consider the following properties for a vector bundle $\cE$ over $X$ of rank $\rk(\cE) \le \dim X$:
\begin{equation}
\tag{$*$} \forall i=1, \dots, \rk(\cE)  \textrm{~there exists ~} E_i \in \Q\textrm{~such that~} c_i(\cE) = E_iH^i.
\end{equation}\vspace{-0.6cm}
\begin{equation}\tag{$**$} \forall i=1, \dots, \rk(\cE)  \textrm{~there exists~} E_i \in \Z \textrm{~such that~} c_i(\cE) = E_iH^i.
\end{equation}
By convention, for a vector bundle $\cE$ satisfying $(*)$ or $(**)$ as above, we set $E_0:=1$, and $E_i:=0$ for $i\in \Z\setminus [0,\rk\cE]$, and associate with it the following polynomial:
$$P_\cE(t) := \sum_{i=0}^{\rk \cE} E_it^i,$$
 which is a numerical version of the Chern polynomial of $\cE$.
Moreover, given a sequence of integers
$$\lambda=(\lambda_1,\dots, \lambda_t),\quad\lambda_1\geq \dots\geq \lambda_t> 0,\quad \sum_{i=1}^t\lambda_i=k,$$ 
with $t\leq k\leq \dim(X)$, we define the following rational (resp. integral) numbers: 
$$
S_{\lambda}:=\det\begin{pmatrix}E_{\lambda_1}&E_{\lambda_1+1}&\ldots&E_{\lambda_1+t-1}\\
E_{\lambda_2-1}&E_{\lambda_2}&\ldots&E_{\lambda_2+t-2}\\
\vdots&\vdots&\ddots&\vdots\\
E_{\lambda_t-t+1}&E_{\lambda_t-t+2}&\ldots&E_{\lambda_t}
\end{pmatrix}
$$
Note that multiplying $S_{\lambda}$ by the positive integer $H^{\dim(X)}$ we obtain the intersection number $\Delta_\lambda(\cE)\cdot H^k$, where $\Delta_\lambda(\cE)$ denotes the Schur polynomial associated with $\lambda$ evaluated at $\cE$ (see, for instance, \cite[Example~12.1.7]{Fult}). In the particular case in which $\cE$ is nef (that is, if the tautological line bundle $\cO_{\P(\cE)}(1)$ is nef on the projectivization $\P(\cE)$), one gets that $S_\lambda\geq 0$, for every $\lambda$, by \cite[Theorem~2.5]{DPS}; for our purposes, we will make use the following inequalities:\begin{subequations}
  \begin{alignat}{3}
    &S_{(j)}&=&\quad E_j\geq 0, &  0\leq j,\\
    &S_{(j,1)}&= &\quad E_jE_1-E_{j+1}\geq 0,&\quad\quad  1\leq j+1 \le \dim X,\label{sub:3a}\\
    &S_{(j,j)}&= &\quad E^2_j -E_{j-1}E_{j+1}\geq 0,&\quad\quad 0\leq 2j \le \dim X,\label{sub:3b}\\
    &S_{(j,j,1)}\,\,&= &\quad E_1S_{(j,j)} -E_{j+1}E_{j}+E_{j+2}E_{j-1}\geq 0,&\qquad\quad  1\leq 2j+1 \le \dim X.\label{sub:3c}
  \end{alignat}
\end{subequations}

\begin{lemma}\label{lem:c1} Let $\cE$ be a nef vector bundle satisfying $(**)$. Let $r$ be the maximum integer such that $E_r \not = 0$. Then $E_j >0$ for every $j=1, \dots, r$.
Moreover, if $s:=\min\{r-1, \lfloor\frac{\dim X}{2} \rfloor\} >0$, then one of the following holds: 
\begin{enumerate}
\item $E_i=1$ for every $i=1, \dots, s+1$; 
\item $E_i \ge 2$ for every $i=1, \dots, s$. 
\end{enumerate}
\end{lemma}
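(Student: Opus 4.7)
The plan is to deduce both assertions from the Schur-positivity inequalities $S_{(j)},S_{(j,1)},S_{(j,j)}\ge 0$ collected in the excerpt, combined with the integrality of the $E_j$.

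For the first assertion I would argue by contradiction. Suppose $E_j=0$ for some $1\le j<r$. The inequality $S_{(j,1)}\ge 0$, which reads $E_1E_j\ge E_{j+1}$ and is valid whenever $j+1\le\dim X$, forces $E_{j+1}\le 0$; together with $S_{(j+1)}\ge 0$ this gives $E_{j+1}=0$. Iterating, the vanishing propagates to $E_k=0$ for every $k$ with $j\le k\le\dim X$. Since $r\le\rk(\cE)\le\dim X$, we reach $E_r=0$, contradicting the maximality of $r$. So $E_j\ge 1$ for all $j=1,\dots,r$.

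For the dichotomy, assume $s>0$, so that $r\ge 2$ and $\dim X\ge 2$, and split on the value of $E_1$. If $E_1=1$, the inequality $E_1E_j\ge E_{j+1}$ yields $E_{j+1}\le E_j$ for all $j+1\le\dim X$, so the sequence $E_1,\dots,E_r$ is nonincreasing and composed of positive integers with $E_1=1$; hence $E_i=1$ for every $i=1,\dots,r$, which gives case (1) since $s+1\le r$.

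If instead $E_1\ge 2$, I would suppose for contradiction that $E_j=1$ for some $1\le j\le s$. Because $j\le\lfloor\dim X/2\rfloor$, the log-concavity estimate $E_j^2\ge E_{j-1}E_{j+1}$ (from $S_{(j,j)}\ge 0$) is available; because $j+1\le r$, both $E_{j-1}$ and $E_{j+1}$ are positive integers by the first part (with the convention $E_0=1$ when $j=1$). Their product being at most $1$ forces $E_{j-1}=1$, and a descending induction on $j$ yields $E_1=1$, contradicting $E_1\ge 2$. Hence $E_i\ge 2$ for every $i=1,\dots,s$, which is case (2). The only point requiring attention in this program is that, at each induction step, the index lies in the range in which the Schur inequality being used is valid: the definition of $s$ as $\min\{r-1,\lfloor\dim X/2\rfloor\}$ is engineered to guarantee exactly this.
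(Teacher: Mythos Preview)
Your proof is correct and follows essentially the same approach as the paper's: both use the Schur inequalities $S_{(j,1)}\ge 0$ to propagate positivity of the $E_j$, and $S_{(j,j)}\ge 0$ to force neighbors of a value $1$ to equal $1$. The only cosmetic difference is that for the dichotomy you split first on whether $E_1=1$ (handling that subcase via the monotonicity coming from $S_{(j,1)}$), whereas the paper applies $S_{(j,j)}$ uniformly to propagate the condition $E_j=1$ both up and down; the two organizations are equivalent.
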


\begin{proof}
From the nonnegativity of $S_{(j,1)}= E_jE_1-E_{j+1}$ and the hypothesis $E_r\neq 0$, we get that $E_j >0$ for every $j=1, \dots, r$.

Assume that there exists an index $1 \le j \le s$ such that $E_j=1$. Since $s\leq r-1$ by definition, then $j+1\leq r$ and $j-1\geq 1$, so, both $E_{j-1}$ and $E_{j+1}$ are not $0$. Then the nonnegativity of $S_{(j,j)}= \, E^2_j -E_{j-1}E_{j+1}$, which we can apply because $j\leq s\leq \lfloor\frac{\dim X}{2} \rfloor$, implies that  $E_{j-1}=E_{j+1}=1$.
\end{proof}

\begin{lemma}\label{lem:c3} Let $\cE$ be a nef vector bundle satisfying $(**)$. Let $r$ be the maximum integer such that $E_r \not = 0$, and
assume that $1 < r \le \lfloor \frac{\dim X+1}{2} \rfloor$.
Then $E_i \ge 2$ for every $i=1, \dots, r-1$. If moreover $E_i = 2$ for every $i=1, \dots, r-1$, and $E_r=1$,  
then $r \le 3$.
\end{lemma}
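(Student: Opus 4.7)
Both parts will follow from combining Lemma \ref{lem:c1} with the nonnegativity of the Schur polynomial $S_{(j,j,1)}$, i.e.\ inequality (\ref{sub:3c}), applied at boundary values of $j$ close to $r$.

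For the first assertion, I would first check that under the hypothesis $1 < r \leq \lfloor(\dim X+1)/2\rfloor$, the integer $s := \min\{r-1,\lfloor\dim X/2\rfloor\}$ appearing in Lemma \ref{lem:c1} equals $r-1$, and that $s>0$. Lemma \ref{lem:c1} then yields two alternatives: either (i) $E_i=1$ for every $i=1,\dots,r$, or (ii) $E_i\geq 2$ for every $i=1,\dots,r-1$. It suffices to rule out (i). For this I apply (\ref{sub:3c}) at $j=r-1$, which is allowed because $2(r-1)+1=2r-1\leq\dim X$, a direct consequence of $r\leq\lceil\dim X/2\rceil$. In case (i), using $E_{r+1}=0$ and $E_i=1$ for $0\leq i\leq r$, the expression $E_1 S_{(r-1,r-1)} - E_r E_{r-1} + E_{r+1}E_{r-2}$ evaluates to $-1$, contradicting $S_{(r-1,r-1,1)}\geq 0$.

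For the second assertion, I assume $E_i=2$ for $i=1,\dots,r-1$ and $E_r=1$, and suppose for contradiction that $r\geq 4$. This makes $E_{r-3}=2$ available. I apply (\ref{sub:3c}) at $j=r-2$; the condition $2(r-2)+1=2r-3\leq\dim X$ again follows from the bound on $r$. The middle factor $S_{(r-2,r-2)}=E_{r-2}^2-E_{r-3}E_{r-1}$ collapses to $4-4=0$, so $S_{(r-2,r-2,1)}$ reduces to $-E_{r-1}E_{r-2}+E_r E_{r-3}=-4+2=-2$, contradicting (\ref{sub:3c}).

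The main technical point is selecting the correct Schur polynomial at the correct index. The inequality $S_{(j,j,1)}\geq 0$ becomes sharp precisely at the boundary values $j=r-1$ and $j=r-2$: either the ``correcting'' product $E_{j+2}E_{j-1}$ vanishes (because $E_{j+2}=E_{r+1}=0$), or one of its factors drops from $2$ to $1$ (since $E_{j+2}=E_r=1$), tipping the sign of the determinant. Any interior choice of $j$ leaves all entries equal to $2$ and only produces nonstrict inequalities, carrying no obstruction; identifying that the useful information sits at the very end of the Chern sequence is the only real subtlety of the proof.
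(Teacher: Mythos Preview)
Your proof is correct and follows essentially the same approach as the paper: both reduce the first assertion to ruling out case (1) of Lemma \ref{lem:c1} via the inequality $S_{(r-1,r-1,1)}\geq 0$, and both dispatch the second assertion by computing $S_{(r-2,r-2,1)}=-2$ when $r\geq 4$. The only cosmetic difference is that the paper phrases the first step as ``it is enough to prove $E_{r-1}>1$'' and invokes (\ref{sub:3b}) to obtain $S_{(r-1,r-1)}=0$, whereas you compute directly in case (i); the content is identical.
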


\begin{proof}
By hypothesis $s=\min\{r-1, \lfloor\frac{\dim X}{2} \rfloor\}=r-1$, hence by Lemma \ref{lem:c1} it is enough to prove that $E_{r-1} >1$.
If else $E_{r-1} =1$, by inequality (\ref{sub:3b}) we  have $S_{(r-1,r-1)}=0$; therefore,
$S_{(r-1,r-1,1)}$ is equal to $-E_{r}E_{r-1}$, which is strictly negative, contradicting (\ref{sub:3c}). 

The assumption that $E_i = 2$ for every $i=1, \dots, r-1$ implies that  $S_{(j,j)}=0$ for $2 \le j \le r-2$. This condition is empty for $r\leq 3$, while if $r\ge 4$ we at least have 
$S_{(r-2,r-2)}=0$. Then, since we are also assuming that $E_r=1$, in the case $r\geq 4$ we would have:
$$S_{(r-2,r-2,1)} = E_1S_{(r-2,r-2)}-(E_{r-1}E_{r-2}-E_{r}E_{r-3}) =-2,$$
a contradiction.
\end{proof}

\begin{lemma}\label{lem:prodpol}
Let $\cE$ and $\cF$ be two nef vector bundles on $X$ satisfying $(*)$ and such that 
$$P_{\cE}(t)P_{\cF}(-t)= 1 -t^{k}$$ 
for some  integer $k$ such that $k \le \dim X+1$.  Then one of the following happens:
\begin{enumerate}
\item $k=6$, $P_{\cE}(t)=P_{\cF}(t)=1+2t+2t^2+t^3$; 
\item $P_{\cE}(t)=\sum_{i=0}^{k-1} t^i$, $P_{\cF}(t)=1+t$;
\item $P_{\cE}(t)=1+t$, $P_{\cF}(t)=\sum_{i=0}^{k-1} t^i$, $k$ even.
\end{enumerate}
\end{lemma}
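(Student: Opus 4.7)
The plan is to exploit the polynomial identity $P_\cE(t)P_\cF(-t) = 1 - t^k$ together with the nef-class inequalities of Lemmas~\ref{lem:c1} and~\ref{lem:c3}. In $\Q[t]$ we have $1 - t^k = \prod_{d \mid k}\widetilde\Phi_d(t)$, where $\widetilde\Phi_d(t) := t^{\varphi(d)}\Phi_d(1/t)$ is irreducible in $\Z[t]$ with constant term $1$; hence $P_\cE$ and $P_\cF(-t)$ are each squarefree products of distinct such factors, so $P_\cE, P_\cF \in \Z[t]$ and condition $(**)$ applies to $\cE$ and $\cF$. Since their roots lie on the unit circle, both polynomials are palindromic. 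Comparing the coefficient of $t^k$ in the identity gives $e + f = k$ and $(-1)^f E_eF_f = -1$; with $E_e, F_f > 0$ by nefness, $f$ is odd and $E_e = F_f = 1$. Finally, $\widetilde\Phi_1 = 1 - t$ has a negative coefficient while $P_\cE$ has non-negative ones, so $\widetilde\Phi_1 \nmid P_\cE$; hence $(1+t) \mid P_\cF$ and $f \geq 1$.

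I would first dispose of $\min(e,f) = 1$. If $f = 1$, palindromicity and $F_f = 1$ give $P_\cF = 1 + t$, and dividing the identity yields $P_\cE = \sum_{i=0}^{k-1} t^i$, case~(2). If $e = 1$ and $f \geq 2$, palindromicity gives $P_\cE = 1 + t$, so $(1+t) \mid (1 - t^k)$ forces $k$ even and $P_\cF = \sum_{i=0}^{k-1} t^i$, case~(3). Henceforth assume $\min(e,f) \geq 2$ and, without loss of generality, $e \leq f$. Evaluating the identity at $t = -1$ yields $P_\cE(-1)P_\cF(1) = 1 - (-1)^k$; since $F_i \geq 1$ for all $i$ by Lemma~\ref{lem:c1}, we have $P_\cF(1) \geq f + 1 \geq 3$, so the product cannot equal $2$. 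Hence $k$ is even, and the factor $\widetilde\Phi_2 = 1 + t$ of $1 - t^k$ must lie in $P_\cE$ (else $P_\cF(1) = 0$), giving $(1+t) \mid P_\cE$.

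Since $e \leq k/2 \leq \lfloor(\dim X + 1)/2\rfloor$, Lemma~\ref{lem:c3} applies to $\cE$: $E_i \geq 2$ for $i = 1, \ldots, e-1$, with $e \leq 3$ whenever all these equal $2$. The vanishing of the $t^1, t^2, t^3$ coefficients in the identity (valid for $k \geq 4$) yields $F_1 = E_1$, $F_2 = E_1^2 - E_2$, and the key relation
\[
F_3 \;=\; E_3 + E_1^3 - 2E_1E_2,
\]
which must be $\geq 0$ by nefness of $\cF$. Combined with the requirement that $P_\cE$ be a squarefree product of distinct $\widetilde\Phi_d$'s containing $\widetilde\Phi_2$, this forms the main toolkit.

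For $e = 2$, a squarefree $\widetilde\Phi_d$-product of degree $2$ containing $\widetilde\Phi_2$ would require another degree-$1$ factor, but the only one is $\widetilde\Phi_2$ itself; hence $e = 2$ is impossible. For $e = 3$, the complementary factor must be one of $\widetilde\Phi_3, \widetilde\Phi_4, \widetilde\Phi_6$, giving
\[
P_\cE \in \{\widetilde\Phi_2\widetilde\Phi_3,\; \widetilde\Phi_2\widetilde\Phi_4,\; \widetilde\Phi_2\widetilde\Phi_6\} = \{1+2t+2t^2+t^3,\; 1+t+t^2+t^3,\; 1+t^3\},
\]
of which only the first satisfies $E_1 \geq 2$. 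Thus $P_\cE = \widetilde\Phi_2\widetilde\Phi_3$ and $6 \mid k$. Writing $k = 6m$ and using $(1-t^6)/(\widetilde\Phi_2\widetilde\Phi_3) = \widetilde\Phi_1\widetilde\Phi_6 = 1 - 2t + 2t^2 - t^3$, a direct computation gives
\[
P_\cF(t) = (1 + 2t + 2t^2 + t^3)(1 + t^6 + t^{12} + \cdots + t^{6(m-1)}),
\]
so $F_4 = F_5 = 0$; for $m \geq 2$ this contradicts Lemma~\ref{lem:c1} since $f = 6m - 3 > 5$. Hence $m = 1$, $k = 6$, $P_\cF = P_\cE$: case~(1). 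Finally, for $e \geq 4$ a finite enumeration of admissible palindromic $\widetilde\Phi_d$-products of degree $e$ (containing $\widetilde\Phi_2$ and with $E_1 \geq 2$) shows that each candidate either has all middle $E_i = 2$ (forbidden by the second part of Lemma~\ref{lem:c3}) or produces $F_3 < 0$ via the displayed relation; this last enumeration is the main obstacle in a full write-up, but it is finite and routine.
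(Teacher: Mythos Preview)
Your approach via cyclotomic factorization is genuinely different from the paper's and handles the low-degree cases ($e\le 3$) cleanly, but there is a real gap in the final step. For $e\ge 4$ you assert a dichotomy: every admissible $P_\cE$ either has all middle coefficients equal to $2$ (hence killed by Lemma~\ref{lem:c3}) or yields $F_3<0$. This dichotomy is false. Take $e=7$ and
\[
P_\cE(t)=\widetilde\Phi_2\widetilde\Phi_3\widetilde\Phi_5=(1+t)(1+t+t^2)(1+t+t^2+t^3+t^4)=1+3t+5t^2+6t^3+6t^4+5t^5+3t^6+t^7,
\]
which is palindromic, contains $\widetilde\Phi_2$, avoids $\widetilde\Phi_1$, and has $E_1=3\ge 2$. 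The middle coefficients are not all $2$, yet your formula gives $F_3=E_3+E_1^3-2E_1E_2=6+27-30=3\ge 0$. (The actual contradiction comes later: continuing the recursion one finds $F_4=1$ and $F_5=-1$.) More fundamentally, the ``finite enumeration'' is finite only for each fixed $e$; since $e$ can range up to $k/2$ with $k\le\dim X+1$ arbitrary, you are proposing infinitely many checks, and you have no uniform mechanism to carry them out.

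The paper's argument avoids enumeration entirely. After reducing to $e,f\ge 3$ and $k=2k'$ even, it takes $e\ge f$ (the opposite convention to yours), applies Lemma~\ref{lem:c3} to $\cF$ to get $F_1\ge 2$, hence $E_1\ge 2$, and then Lemma~\ref{lem:c1} to $\cE$ to get $E_i\ge 2$ for all $i\le k'-1$. The decisive step is to evaluate at $t=1$: since $(1+t)\mid P_\cF(t)$ one factors $P_\cF(-t)=(1-t)G(t)$ and obtains $P_\cE(t)G(t)=\sum_{i=0}^{2k'-1}t^i$, so $P_\cE(1)\mid 2k'$. But the lower bound $E_i\ge 2$ forces $P_\cE(1)\ge 2k'$, hence equality, which pins down $e=k'$ and $E_i=2$ for $1\le i\le e-1$; now Lemma~\ref{lem:c3} gives $e\le 3$, so $k=6$. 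This divisibility-plus-lower-bound squeeze is the idea your argument is missing; without something of that kind, a cyclotomic enumeration cannot close uniformly in $e$.
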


\begin{proof}
Write $$P_{\cE}(t)= \sum_{i=0}^e E_it^i, \qquad P_{\cF}(t)=\sum_{i=0}^f F_it^i,$$
with $E_e,F_f\neq 0$. Since the product $P_\cE(t)P_\cF(-t)=1-t^{k}$ is primitive, Gauss' lemma tells us that the coefficients of $P_\cE(t)$ and $P_\cF(-t)$
are integers, and the bundles $\cE,\cF$ satisfy condition $(**)$. Moreover, since $(-1)^{f}E_{e}F_{f} =-1$ and $E_e,F_f>0$ by Lemma \ref{lem:c3}, we deduce that $f$ is odd, and that $E_e=F_f=1$. If $f=1$ then clearly we are in case (2); assume from now on that $f \ge 3$.

We claim that $k$ must be even: in fact, if this is not the case, evaluating at $t=-1$ we get
$P_\cE(-1)P_\cF(1)=2$, hence $\sum_{i=0}^{f} F_i \le 2$, which, by the first part of Lemma \ref{lem:c1}, implies that $f \le 1$, against our assumption that $f \ge 3$. Now, changing $t$ to $-t$ we get $P_{\cE}(-t)P_{\cF}(t)= 1 -t^{k}$, and our previous argument tells us that, either we are in case (3), or $e\geq 3$. 

Summing up, we are left with the case in which $k=2k'$, $k'\in\Z$, $e,f\geq 3$. Note first that in this case we have $P_{\cE}(-t)P_{\cF}(t)=P_{\cE}(t)P_{\cF}(-t)= 1 -t^{k}$ so we can assume, without loss of generality, that $e\geq f$. In particular, we may apply Lemma \ref{lem:c3} to $\cF$, obtaining that
$F_1 \ge 2$. Since $E_1-F_1=0$ this implies that $E_1\geq 2$, and Lemma \ref{lem:c1} tells us that $E_i \ge 2$ for every $i=1, \dots, k'-1$  (note that $k'-1<k/2\leq e$ and that, by hypothesis $k'-1=(k/2)-1\leq\lfloor (\dim X+1)/2\rfloor-1$, hence $k'-1\leq \min\{e-1,\lfloor \dim X/2\rfloor\}$).

Evaluating at $t=1$ we get $P_\cE(1)P_\cF(-1)=0$; since $P_\cE(1) > 0$, then $P_\cF(-1)=0$ and we may factor $P_\cF(-t)=(1-t)G(t)$, with $G(t)\in\Z[t]$.
We can thus write
\begin{equation}\label{eq:1}
P_\cE(t)G(t)= \sum_{i=0}^{2k'-1} t^{i}
\end{equation}
In particular, evaluating at $t=1$ we deduce that $\sum_i E_i$ divides $2k'$. Since
$$\sum_{i=0}^{e} E_i \ge 1 +2(k'-1) + \sum_{i=k'}^{e}E_i \ge 2k',$$ we must have $\sum E_i = 2k'$ and $e=k'$, which force $E_i=2$ for every $i=1, \dots e-1$. We conclude by the second part of Lemma \ref{lem:c3}.
\end{proof}



\section{Nesting rational homogeneous varieties}\label{sec:nesting}
In this section we recall the definition of nesting of rational homogeneous varieties, and present some examples of nestings that are constructed upon diagram foldings. The last part of the section is devoted to the behaviour of nestings in bundles, that we will later apply in the case of bundles over the projective line $\P^1$. 

\begin{definition}
Let $G$ be a semisimple group, with Lie algebra $\fg$, and Dynkin diagram $\cD$. Given two disjoint nonempty subsets of nodes of $\cD$, $I,J\subset D$, {\em a nesting of type $(\cD,I,J)$} is a section $\sigma:\cD(I)\to\cD(I\cup J)$ of the natural map $\pi_{I\cup J,I}:\cD(I\cup J)\to \cD(I)$. \end{definition}

We may provide the set of  nestings of type $(\cD,I,J)$ with a structure of scheme, by considering the schemes $\Hom(\cD(I),\cD(I\cup J))$, $\Hom(\cD(I),\cD(I))$ (see \cite[I~1.9]{Kollar}), the natural morphism 
$$
\Hom(\cD(I),\cD(I\cup J))\to \Hom(\cD(I),\cD(I))
$$
given by the composition with $\pi_{I\cup J,I}$, and defining  $\Nest(\cD,I,J)$ as the scheme-theoretical inverse image of the identity morphism. As such, it comes with a universal family $\Nest(\cD,I,J)\times \cD(I)\to\cD(I\cup J)$, restriction of the universal family of $\Hom(\cD(I),\cD(I\cup J))$, that represents the functor (which we also denote by $\Nest(\cD,I,J)$) associating to every $\C$-scheme $S$ the set of morphisms $S\times \cD(I)\to \cD(I\cup J)$ satisfying that their composition with $\pi_{I\cup J,I}$ equals the second projection $S\times \cD(I)\to \cD(I)$.

%
%

\begin{remark}\label{rem:action}
Assume now that $\Nest(\cD,I,J)\neq\emptyset$. Since $G$ is the adjoint group of type $\cD$, its action on $\cD(I)$ and $\cD(I\cup J)$ induces an algebraic action on $\Nest(\cD,I,J)$, given by the conjugation with $g \in G$:
$$
\xymatrix@C=90pt{\cD(I)\ar[r]^{\sigma\in \Nest(\cD,I,J)}&\cD(I\cup J)\ar[d]^{g^{-1}}\\\cD(I)\ar[r]_{g^{-1}\sigma g\,\in \Nest(\cD,I,J)}\ar[u]^{ g}&\cD(I\cup J)}
$$
In this way, the isotropy subgroup of $\sigma\in \Nest(\cD,I,J)$ in  $G$ is precisely 
$$
G_\sigma:=\left\{g\in G|\,\, g\sigma=\sigma g\right\}.
$$
The action of $G$ on $\Nest(\cD,I,J)$ is defined scheme-theoretically, by means of the universal property of $\Nest(\cD,I,J)$, in the obvious way.  
\end{remark}

\subsection{Examples: nestings and foldings}\label{ssec:folds}

In the case in which the Dynkin diagram $\cD$ is disconnected one may easily construct examples of nestings, in the following way:
\begin{example}
Given two semisimple groups $G,G'$ with Dynkin diagrams $\cD,\cD'$, and given a morphism $f:\cD(I)\to \cD'(I')$, the graph of $f$ defines a nesting $(\cD\sqcup\cD',I,I')$. 
\end{example}

Finding examples of nestings in the case in which $\cD$ is connected is more involved. The obvious examples are related to the existence of {\em foldings} of $\cD$, that we discuss in detail hereunder.

We consider the following pairs $(G',G)$ of semisimple algebraic groups:
\begin{subequations}
  \begin{alignat}{3}
    &(\PSP(2n),\PGL(2n)),& &\qquad n\geq 2, \label{sub:gpa} \\
    &(\PSO(2n-1),\PSO(2n)),& &\qquad n\geq 4,\label{sub:gpd}\\
    &(\DF_4,\DE_{6}^{\ad}),& &\label{sub:gpe}\\
    &(\DG_2,\PSO(8)),& &\label{sub:gpd4}\\
    &(\DG_2,\PSO(7)),& &\label{sub:gpb3}
  \end{alignat}
\end{subequations}
where $\DE_{6}^{\ad}$, $\DF_4$, $\DG_2$ represent the adjoint  algebraic groups of the complex simple  Lie algebras of type $\DF_4$, $\DG_2$,  and $\DE_{6}$, respectively. It is well known that in each case we have an inclusion $G'\subset G$ that can be expressed in terms of a map from the root system of $G$ to the root system of $G'$, in the following sense: we may choose a Cartan and a Borel subgroup of $G$, $H\subset B\subset G$, such that $H':=H\cap G'\subset B':=B\cap G'\subset G'$ are Cartan and Borel subgroups of $G'$, respectively. Then the induced map $p:\Mo(H)\to \Mo(H')$ among the corresponding lattices of characters sends the base of positive simple roots of $G$ determined by $B$ to the base of positive simple roots of $G'$ determined by $B'$. In other words, $p$ can be determined by a map among the sets of nodes $D$, $D'$ of the corresponding Dynkin diagrams $\cD$, $\cD'$. Graphically, this operation is called {\em diagram folding}: 

\renewcommand\arraycolsep{30pt}
$$
\begin{array}{ll}
\ifx\du\undefined
  \newlength{\du}
\fi
\setlength{\du}{2\unitlength}
\begin{tikzpicture}
\pgftransformxscale{1.000000}
\pgftransformyscale{1.000000}
\definecolor{dialinecolor}{rgb}{0.000000, 0.000000, 0.000000} 
\pgfsetstrokecolor{dialinecolor}
\definecolor{dialinecolor}{rgb}{0.000000, 0.000000, 0.000000} 
\pgfsetfillcolor{dialinecolor}
\pgfsetlinewidth{0.300000\du}
\pgfsetdash{}{0pt}
\pgfsetdash{}{0pt}
\pgfpathellipse{\pgfpoint{4\du}{-5\du}}{\pgfpoint{1\du}{0\du}}{\pgfpoint{0\du}{1\du}}
\pgfusepath{stroke}
\node at (4\du,-5\du){};
\pgfpathellipse{\pgfpoint{4\du}{5\du}}{\pgfpoint{1\du}{0\du}}{\pgfpoint{0\du}{1\du}}
\pgfusepath{stroke}
\node at (4\du,5\du){};
\pgfpathellipse{\pgfpoint{14\du}{-5\du}}{\pgfpoint{1\du}{0\du}}{\pgfpoint{0\du}{1\du}}
\pgfusepath{stroke}
\node at (14\du,-5\du){};
\pgfpathellipse{\pgfpoint{14\du}{5\du}}{\pgfpoint{1\du}{0\du}}{\pgfpoint{0\du}{1\du}}
\pgfusepath{stroke}
\node at (14\du,5\du){};
\pgfpathellipse{\pgfpoint{24\du}{-5\du}}{\pgfpoint{1\du}{0\du}}{\pgfpoint{0\du}{1\du}}
\pgfusepath{stroke}
\node at (24\du,-5\du){};
\pgfpathellipse{\pgfpoint{24\du}{5\du}}{\pgfpoint{1\du}{0\du}}{\pgfpoint{0\du}{1\du}}
\pgfusepath{stroke}
\node at (24\du,5\du){};
\pgfpathellipse{\pgfpoint{34\du}{5\du}}{\pgfpoint{1\du}{0\du}}{\pgfpoint{0\du}{1\du}}
\pgfusepath{stroke}
\node at (34\du,5\du){};
\pgfpathellipse{\pgfpoint{34\du}{-5\du}}{\pgfpoint{1\du}{0\du}}{\pgfpoint{0\du}{1\du}}
\pgfusepath{stroke}
\node at (34\du,-5\du){};
\pgfpathellipse{\pgfpoint{44\du}{0\du}}{\pgfpoint{1\du}{0\du}}{\pgfpoint{0\du}{1\du}}
\pgfusepath{stroke}
\node at (44\du,0\du){};
\pgfsetlinewidth{0.300000\du}
\pgfsetdash{}{0pt}
\pgfsetdash{}{0pt}
\pgfsetbuttcap
{\draw (5\du,5\du)--(13\du,5\du);}
{\draw (5\du,-5\du)--(13\du,-5\du);}
{\draw (25\du,5\du)--(33\du,5\du);}
{\draw (25\du,-5\du)--(33\du,-5\du);}
{\draw (34.85\du,4.9\du)--(43.35\du,0.6\du);}
{\draw (34.85\du,-4.9\du)--(43.35\du,-0.6\du);}
\pgfsetlinewidth{0.400000\du}
\pgfsetdash{{1.000000\du}{1.000000\du}}{0\du}
\pgfsetdash{{1.000000\du}{1.000000\du}}{0\du}
\pgfsetbuttcap
{\draw (15.5\du,-5\du)--(22.8\du,-5\du);}
{\draw (15.5\du,5\du)--(22.8\du,5\du);}
\node[anchor=west] at (46\du,0\du){${\rm A}_{2n-1}$};
\end{tikzpicture} 
&
\ifx\du\undefined
  \newlength{\du}
\fi
\setlength{\du}{2\unitlength}
\begin{tikzpicture}
\pgftransformxscale{1.000000}
\pgftransformyscale{1.000000}
\definecolor{dialinecolor}{rgb}{0.000000, 0.000000, 0.000000} 
\pgfsetstrokecolor{dialinecolor}
\definecolor{dialinecolor}{rgb}{0.000000, 0.000000, 0.000000} 
\pgfsetfillcolor{dialinecolor}
\pgfsetlinewidth{0.300000\du}
\pgfsetdash{}{0pt}
\pgfsetdash{}{0pt}
\pgfpathellipse{\pgfpoint{-6\du}{0\du}}{\pgfpoint{1\du}{0\du}}{\pgfpoint{0\du}{1\du}}
\pgfusepath{stroke}
\node at (-6\du,0\du){};
\pgfpathellipse{\pgfpoint{4\du}{0\du}}{\pgfpoint{1\du}{0\du}}{\pgfpoint{0\du}{1\du}}
\pgfusepath{stroke}
\node at (4\du,0\du){};
\pgfpathellipse{\pgfpoint{14\du}{0\du}}{\pgfpoint{1\du}{0\du}}{\pgfpoint{0\du}{1\du}}
\pgfusepath{stroke}
\node at (14\du,0\du){};
\pgfpathellipse{\pgfpoint{24\du}{0\du}}{\pgfpoint{1\du}{0\du}}{\pgfpoint{0\du}{1\du}}
\pgfusepath{stroke}
\node at (24\du,0\du){};
\pgfpathellipse{\pgfpoint{34\du}{5\du}}{\pgfpoint{1\du}{0\du}}{\pgfpoint{0\du}{1\du}}
\pgfusepath{stroke}
\node at (34\du,5\du){};
\pgfpathellipse{\pgfpoint{34\du}{-5\du}}{\pgfpoint{1\du}{0\du}}{\pgfpoint{0\du}{1\du}}
\pgfusepath{stroke}
\node at (34\du,-5\du){};
\pgfsetlinewidth{0.300000\du}
\pgfsetdash{}{0pt}
\pgfsetdash{}{0pt}
\pgfsetbuttcap
{\draw (-5\du,0\du)--(3\du,0\du);}
{\draw (15\du,0\du)--(23\du,0\du);}
{\draw (24.65\du,0.6\du)--(33.1\du,4.9\du);}
{\draw (24.65\du,-0.6\du)--(33.1\du,-4.9\du);}
\pgfsetlinewidth{0.400000\du}
\pgfsetdash{{1.000000\du}{1.000000\du}}{0\du}
\pgfsetdash{{1.000000\du}{1.000000\du}}{0\du}
\pgfsetbuttcap
{\draw (5.5\du,0\du)--(12.8\du,0\du);}
\node[anchor=west] at (38\du,0\du){${\rm D}_n$};
\end{tikzpicture} 
\\
\hspace{1.5cm}\downarrow&\hspace{1.5cm}\downarrow\\

\ifx\du\undefined
  \newlength{\du}
\fi
\setlength{\du}{2\unitlength}
\begin{tikzpicture}
\pgftransformxscale{1.000000}
\pgftransformyscale{1.000000}
\definecolor{dialinecolor}{rgb}{0.000000, 0.000000, 0.000000} 
\pgfsetstrokecolor{dialinecolor}
\definecolor{dialinecolor}{rgb}{0.000000, 0.000000, 0.000000} 
\pgfsetfillcolor{dialinecolor}
\pgfsetlinewidth{0.300000\du}
\pgfsetdash{}{0pt}
\pgfsetdash{}{0pt}
\pgfpathellipse{\pgfpoint{4\du}{0\du}}{\pgfpoint{1\du}{0\du}}{\pgfpoint{0\du}{1\du}}
\pgfusepath{stroke}
\node at (4\du,0\du){};
\pgfpathellipse{\pgfpoint{14\du}{0\du}}{\pgfpoint{1\du}{0\du}}{\pgfpoint{0\du}{1\du}}
\pgfusepath{stroke}
\node at (14\du,0\du){};
\pgfpathellipse{\pgfpoint{24\du}{0\du}}{\pgfpoint{1\du}{0\du}}{\pgfpoint{0\du}{1\du}}
\pgfusepath{stroke}
\node at (24\du,0\du){};
\pgfpathellipse{\pgfpoint{34\du}{0\du}}{\pgfpoint{1\du}{0\du}}{\pgfpoint{0\du}{1\du}}
\pgfusepath{stroke}
\node at (34\du,0\du){};
\pgfpathellipse{\pgfpoint{44\du}{0\du}}{\pgfpoint{1\du}{0\du}}{\pgfpoint{0\du}{1\du}}
\pgfusepath{stroke}
\node at (44\du,0\du){};
\pgfsetlinewidth{0.300000\du}
\pgfsetdash{}{0pt}
\pgfsetdash{}{0pt}
\pgfsetbuttcap
{\draw (5\du,0\du)--(13\du,0\du);}
{\draw (25\du,0\du)--(33\du,0\du);}
{\draw (34.85\du,0.6\du)--(43.35\du,0.6\du);}
{\draw (34.85\du,-0.6\du)--(43.35\du,-0.6\du);}
{\pgfsetcornersarced{\pgfpoint{0.300000\du}{0.300000\du}}\definecolor{dialinecolor}{rgb}{0.000000, 0.000000, 0.000000}
\pgfsetstrokecolor{dialinecolor}
\draw (40.8\du,1.2\du)--(37\du,0\du)--(40.8\du,-1.2\du);}
\pgfsetlinewidth{0.400000\du}
\pgfsetdash{{1.000000\du}{1.000000\du}}{0\du}
\pgfsetdash{{1.000000\du}{1.000000\du}}{0\du}
\pgfsetbuttcap
{\draw (15.5\du,0\du)--(22.8\du,0\du);}
\node[anchor=west] at (46\du,0\du){${\rm C}_{n}$};
\end{tikzpicture} 
&
\ifx\du\undefined
  \newlength{\du}
\fi
\setlength{\du}{2\unitlength}
\begin{tikzpicture}
\pgftransformxscale{1.000000}
\pgftransformyscale{1.000000}
\definecolor{dialinecolor}{rgb}{0.000000, 0.000000, 0.000000} 
\pgfsetstrokecolor{dialinecolor}
\definecolor{dialinecolor}{rgb}{0.000000, 0.000000, 0.000000} 
\pgfsetfillcolor{dialinecolor}
\pgfsetlinewidth{0.300000\du}
\pgfsetdash{}{0pt}
\pgfsetdash{}{0pt}
\pgfpathellipse{\pgfpoint{4\du}{0\du}}{\pgfpoint{1\du}{0\du}}{\pgfpoint{0\du}{1\du}}
\pgfusepath{stroke}
\node at (4\du,0\du){};
\pgfpathellipse{\pgfpoint{14\du}{0\du}}{\pgfpoint{1\du}{0\du}}{\pgfpoint{0\du}{1\du}}
\pgfusepath{stroke}
\node at (14\du,0\du){};
\pgfpathellipse{\pgfpoint{24\du}{0\du}}{\pgfpoint{1\du}{0\du}}{\pgfpoint{0\du}{1\du}}
\pgfusepath{stroke}
\node at (24\du,0\du){};
\pgfpathellipse{\pgfpoint{34\du}{0\du}}{\pgfpoint{1\du}{0\du}}{\pgfpoint{0\du}{1\du}}
\pgfusepath{stroke}
\node at (34\du,0\du){};
\pgfpathellipse{\pgfpoint{44\du}{0\du}}{\pgfpoint{1\du}{0\du}}{\pgfpoint{0\du}{1\du}}
\pgfusepath{stroke}
\node at (44\du,0\du){};
\pgfsetlinewidth{0.300000\du}
\pgfsetdash{}{0pt}
\pgfsetdash{}{0pt}
\pgfsetbuttcap
{\draw (5\du,0\du)--(13\du,0\du);}
{\draw (25\du,0\du)--(33\du,0\du);}
{\draw (34.85\du,0.6\du)--(43.35\du,0.6\du);}
{\draw (34.85\du,-0.6\du)--(43.35\du,-0.6\du);}
{\pgfsetcornersarced{\pgfpoint{0.300000\du}{0.300000\du}}\definecolor{dialinecolor}{rgb}{0.000000, 0.000000, 0.000000}
\pgfsetstrokecolor{dialinecolor}
\draw (37\du,1.2\du)--(40.8\du,0\du)--(37\du,-1.2\du);}
\pgfsetlinewidth{0.400000\du}
\pgfsetdash{{1.000000\du}{1.000000\du}}{0\du}
\pgfsetdash{{1.000000\du}{1.000000\du}}{0\du}
\pgfsetbuttcap
{\draw (15.5\du,0\du)--(22.8\du,0\du);}
\node[anchor=west] at (46\du,0\du){${\rm B}_{n-1}$};
\end{tikzpicture} 
\end{array}
$$
\renewcommand\arraycolsep{28pt}
$$
\begin{array}{lll}
\ifx\du\undefined
  \newlength{\du}
\fi
\setlength{\du}{2\unitlength}
\begin{tikzpicture}
\pgftransformxscale{1.000000}
\pgftransformyscale{1.000000}
\definecolor{dialinecolor}{rgb}{0.000000, 0.000000, 0.000000} 
\pgfsetstrokecolor{dialinecolor}
\definecolor{dialinecolor}{rgb}{0.000000, 0.000000, 0.000000} 
\pgfsetfillcolor{dialinecolor}
\pgfsetlinewidth{0.300000\du}
\pgfsetdash{}{0pt}
\pgfsetdash{}{0pt}
\pgfpathellipse{\pgfpoint{24\du}{-5\du}}{\pgfpoint{1\du}{0\du}}{\pgfpoint{0\du}{1\du}}
\pgfusepath{stroke}
\node at (24\du,-5\du){};
\pgfpathellipse{\pgfpoint{24\du}{5\du}}{\pgfpoint{1\du}{0\du}}{\pgfpoint{0\du}{1\du}}
\pgfusepath{stroke}
\node at (24\du,5\du){};
\pgfpathellipse{\pgfpoint{34\du}{5\du}}{\pgfpoint{1\du}{0\du}}{\pgfpoint{0\du}{1\du}}
\pgfusepath{stroke}
\node at (34\du,5\du){};
\pgfpathellipse{\pgfpoint{34\du}{-5\du}}{\pgfpoint{1\du}{0\du}}{\pgfpoint{0\du}{1\du}}
\pgfusepath{stroke}
\node at (34\du,-5\du){};
\pgfpathellipse{\pgfpoint{44\du}{0\du}}{\pgfpoint{1\du}{0\du}}{\pgfpoint{0\du}{1\du}}
\pgfusepath{stroke}
\node at (44\du,0\du){};
\pgfpathellipse{\pgfpoint{54\du}{0\du}}{\pgfpoint{1\du}{0\du}}{\pgfpoint{0\du}{1\du}}
\pgfusepath{stroke}
\node at (54\du,0\du){};
\pgfsetlinewidth{0.300000\du}
\pgfsetdash{}{0pt}
\pgfsetdash{}{0pt}
\pgfsetbuttcap
{\draw (25\du,5\du)--(33\du,5\du);}
{\draw (25\du,-5\du)--(33\du,-5\du);}
{\draw (34.85\du,4.9\du)--(43.35\du,0.6\du);}
{\draw (34.85\du,-4.9\du)--(43.35\du,-0.6\du);}
{\draw (45\du,0\du)--(53\du,0\du);}
\node[anchor=west] at (56\du,0\du){${\rm E}_{6}$};
\end{tikzpicture} 
&
\ifx\du\undefined
  \newlength{\du}
\fi
\setlength{\du}{2\unitlength}
\begin{tikzpicture}
\pgftransformxscale{1.000000}
\pgftransformyscale{1.000000}
\definecolor{dialinecolor}{rgb}{0.000000, 0.000000, 0.000000} 
\pgfsetstrokecolor{dialinecolor}
\definecolor{dialinecolor}{rgb}{0.000000, 0.000000, 0.000000} 
\pgfsetfillcolor{dialinecolor}
\pgfsetlinewidth{0.300000\du}
\pgfsetdash{}{0pt}
\pgfsetdash{}{0pt}
\pgfpathellipse{\pgfpoint{34\du}{5\du}}{\pgfpoint{1\du}{0\du}}{\pgfpoint{0\du}{1\du}}
\pgfusepath{stroke}
\node at (34\du,5\du){};
\pgfpathellipse{\pgfpoint{34\du}{-5\du}}{\pgfpoint{1\du}{0\du}}{\pgfpoint{0\du}{1\du}}
\pgfusepath{stroke}
\node at (34\du,-5\du){};
\pgfpathellipse{\pgfpoint{44\du}{0\du}}{\pgfpoint{1\du}{0\du}}{\pgfpoint{0\du}{1\du}}
\pgfusepath{stroke}
\node at (44\du,0\du){};
\pgfpathellipse{\pgfpoint{34\du}{0\du}}{\pgfpoint{1\du}{0\du}}{\pgfpoint{0\du}{1\du}}
\pgfusepath{stroke}
\node at (34\du,0\du){};
\pgfsetlinewidth{0.300000\du}
\pgfsetdash{}{0pt}
\pgfsetdash{}{0pt}
\pgfsetbuttcap
{\draw (35\du,0\du)--(43\du,0\du);}
{\draw (34.85\du,4.9\du)--(43.35\du,0.6\du);}
{\draw (34.85\du,-4.9\du)--(43.35\du,-0.6\du);}
\node[anchor=west] at (48\du,0\du){${\rm D}_{4}$};
\end{tikzpicture} 
&
\ifx\du\undefined
  \newlength{\du}
\fi
\setlength{\du}{2\unitlength}
\begin{tikzpicture}
\pgftransformxscale{1.000000}
\pgftransformyscale{1.000000}
\definecolor{dialinecolor}{rgb}{0.000000, 0.000000, 0.000000} 
\pgfsetstrokecolor{dialinecolor}
\definecolor{dialinecolor}{rgb}{0.000000, 0.000000, 0.000000} 
\pgfsetfillcolor{dialinecolor}
\pgfsetlinewidth{0.300000\du}
\pgfsetdash{}{0pt}
\pgfsetdash{}{0pt}
\pgfpathellipse{\pgfpoint{34\du}{5\du}}{\pgfpoint{1\du}{0\du}}{\pgfpoint{0\du}{1\du}}
\pgfusepath{stroke}
\node at (34\du,5\du){};
\pgfpathellipse{\pgfpoint{34\du}{-5\du}}{\pgfpoint{1\du}{0\du}}{\pgfpoint{0\du}{1\du}}
\pgfusepath{stroke}
\node at (34\du,-5\du){};
\pgfpathellipse{\pgfpoint{44\du}{0\du}}{\pgfpoint{1\du}{0\du}}{\pgfpoint{0\du}{1\du}}
\pgfusepath{stroke}
\node at (44\du,0\du){};
\pgfsetlinewidth{0.300000\du}
\pgfsetdash{}{0pt}
\pgfsetdash{}{0pt}
\pgfsetbuttcap
{\draw (34.95\du,5.3\du)--(43.65\du,0.95\du);}
{\draw (34.45\du,4.1\du)--(43.15\du,-0.25\du);}
{\draw (34.85\du,-4.9\du)--(43.35\du,-0.6\du);}
{\pgfsetcornersarced{\pgfpoint{0.300000\du}{0.300000\du}}\definecolor{dialinecolor}{rgb}{0.000000, 0.000000, 0.000000}
\pgfsetstrokecolor{dialinecolor}
\draw (40.4\du,0.5\du)--(37.3\du,3.4\du)--(41.6\du,2.7\du);}
\node[anchor=west] at (48\du,0\du){${\rm B}_{3}$};
\end{tikzpicture} 
\\
\hspace{1.2cm}\downarrow&\hspace{0.5cm}\downarrow&\hspace{0.5cm}\downarrow\\
\ifx\du\undefined
  \newlength{\du}
\fi
\setlength{\du}{2\unitlength}
\begin{tikzpicture}
\pgftransformxscale{1.000000}
\pgftransformyscale{1.000000}
\definecolor{dialinecolor}{rgb}{0.000000, 0.000000, 0.000000} 
\pgfsetstrokecolor{dialinecolor}
\definecolor{dialinecolor}{rgb}{0.000000, 0.000000, 0.000000} 
\pgfsetfillcolor{dialinecolor}
\pgfsetlinewidth{0.300000\du}
\pgfsetdash{}{0pt}
\pgfsetdash{}{0pt}
\pgfpathellipse{\pgfpoint{4\du}{0\du}}{\pgfpoint{1\du}{0\du}}{\pgfpoint{0\du}{1\du}}
\pgfusepath{stroke}
\node at (4\du,0\du){};
\pgfpathellipse{\pgfpoint{14\du}{0\du}}{\pgfpoint{1\du}{0\du}}{\pgfpoint{0\du}{1\du}}
\pgfusepath{stroke}
\node at (14\du,0\du){};
\pgfpathellipse{\pgfpoint{24\du}{0\du}}{\pgfpoint{1\du}{0\du}}{\pgfpoint{0\du}{1\du}}
\pgfusepath{stroke}
\node at (24\du,0\du){};
\pgfpathellipse{\pgfpoint{34\du}{0\du}}{\pgfpoint{1\du}{0\du}}{\pgfpoint{0\du}{1\du}}
\pgfusepath{stroke}
\node at (34\du,0\du){};
\pgfsetlinewidth{0.300000\du}
\pgfsetdash{}{0pt}
\pgfsetdash{}{0pt}
\pgfsetbuttcap
{\draw (5\du,0\du)--(13\du,0\du);}
{\draw (25\du,0\du)--(33\du,0\du);}
{\draw (14.85\du,0.6\du)--(23.35\du,0.6\du);}
{\draw (14.85\du,-0.6\du)--(23.35\du,-0.6\du);}
{\pgfsetcornersarced{\pgfpoint{0.300000\du}{0.300000\du}}\definecolor{dialinecolor}{rgb}{0.000000, 0.000000, 0.000000}
\pgfsetstrokecolor{dialinecolor}
\draw (17\du,1.2\du)--(20.8\du,0\du)--(17\du,-1.2\du);}
\node[anchor=west] at (36\du,0\du){${\rm F}_{4}$};
\end{tikzpicture} 
&
\ifx\du\undefined
  \newlength{\du}
\fi
\setlength{\du}{2\unitlength}
\begin{tikzpicture}
\pgftransformxscale{1.000000}
\pgftransformyscale{1.000000}
\definecolor{dialinecolor}{rgb}{0.000000, 0.000000, 0.000000} 
\pgfsetstrokecolor{dialinecolor}
\definecolor{dialinecolor}{rgb}{0.000000, 0.000000, 0.000000} 
\pgfsetfillcolor{dialinecolor}
\pgfsetlinewidth{0.300000\du}
\pgfsetdash{}{0pt}
\pgfsetdash{}{0pt}
\pgfpathellipse{\pgfpoint{-6\du}{0\du}}{\pgfpoint{1\du}{0\du}}{\pgfpoint{0\du}{1\du}}
\pgfusepath{stroke}
\node at (-6\du,0\du){};
\pgfpathellipse{\pgfpoint{4\du}{0\du}}{\pgfpoint{1\du}{0\du}}{\pgfpoint{0\du}{1\du}}
\pgfusepath{stroke}
\node at (4\du,0\du){};
\pgfsetlinewidth{0.300000\du}
\pgfsetdash{}{0pt}
\pgfsetdash{}{0pt}
\pgfsetbuttcap
{\draw (-5\du,0\du)--(3\du,0\du);}
{\draw (-5.35\du,0.7\du)--(3.35\du,0.7\du);}
{\draw (-5.35\du,-0.7\du)--(3.35\du,-0.7\du);}
{\pgfsetcornersarced{\pgfpoint{0.300000\du}{0.300000\du}}\definecolor{dialinecolor}{rgb}{0.000000, 0.000000, 0.000000}
\pgfsetstrokecolor{dialinecolor}
\draw (0.8\du,-1.2\du)--(-3\du,0\du)--(0.8\du,1.2\du);}
\node[anchor=west] at (8\du,0\du){${\rm G}_2$};
\end{tikzpicture} 
&
\ifx\du\undefined
  \newlength{\du}
\fi
\setlength{\du}{2\unitlength}
\begin{tikzpicture}
\pgftransformxscale{1.000000}
\pgftransformyscale{1.000000}
\definecolor{dialinecolor}{rgb}{0.000000, 0.000000, 0.000000} 
\pgfsetstrokecolor{dialinecolor}
\definecolor{dialinecolor}{rgb}{0.000000, 0.000000, 0.000000} 
\pgfsetfillcolor{dialinecolor}
\pgfsetlinewidth{0.300000\du}
\pgfsetdash{}{0pt}
\pgfsetdash{}{0pt}
\pgfpathellipse{\pgfpoint{-6\du}{0\du}}{\pgfpoint{1\du}{0\du}}{\pgfpoint{0\du}{1\du}}
\pgfusepath{stroke}
\node at (-6\du,0\du){};
\pgfpathellipse{\pgfpoint{4\du}{0\du}}{\pgfpoint{1\du}{0\du}}{\pgfpoint{0\du}{1\du}}
\pgfusepath{stroke}
\node at (4\du,0\du){};
\pgfsetlinewidth{0.300000\du}
\pgfsetdash{}{0pt}
\pgfsetdash{}{0pt}
\pgfsetbuttcap
{\draw (-5\du,0\du)--(3\du,0\du);}
{\draw (-5.35\du,0.7\du)--(3.35\du,0.7\du);}
{\draw (-5.35\du,-0.7\du)--(3.35\du,-0.7\du);}
{\pgfsetcornersarced{\pgfpoint{0.300000\du}{0.300000\du}}\definecolor{dialinecolor}{rgb}{0.000000, 0.000000, 0.000000}
\pgfsetstrokecolor{dialinecolor}
\draw (0.8\du,-1.2\du)--(-3\du,0\du)--(0.8\du,1.2\du);}
\node[anchor=west] at (8\du,0\du){${\rm G}_2$};
\end{tikzpicture} 
\end{array}
$$
\renewcommand\arraycolsep{1.5pt}
  
Given a parabolic subgroup $P\subset G$, its intersection with $G'$ is a parabolic subgroup $P'$, which can be seen graphically by means of the folding map $p$: if $P=P(D\setminus I)$ for some subset of nodes $D\setminus I$ of the Dynkin diagram $\cD$ of $G$, then $P'=P\cap G'$ is the parabolic subgroup  of $G'$ determined by the subset of nodes $D'\setminus p(I)$. In this way, we have injective  morphisms of varieties:
$$
\cD'(p(I))\hookrightarrow \cD(I), 
$$
In particular, whenever we have two disjoint sets $I,J$ projecting onto the same subset $p(I)=p(J)=p(I\cup J)$, we will have an embedding $\sigma:\cD'(p(I))\hookrightarrow \cD(I\cup J)$. 
In the case in which $G'P(D\setminus I)=G$, that is, in the case in which $G'$ acts transitively on $\cD(I)$, then we will have $\cD'(p(I))=\cD(I)$, and the map $\sigma$ will be a nesting of type $(\cD,I,J)$. 
As noted in Remark \ref{rem:special} this happens only in the cases $\cD(I)=\DA_{2n-1}(1),$ $\DD_n(n),$ $\DB_3(1)$. In other words, only the foldings of types (\ref{sub:gpa}), (\ref{sub:gpd}) and (\ref{sub:gpb3}) provide nestings of rational homogeneous varieties.
Summing up, we have shown the following.

\begin{proposition}\label{prop:foldnest}
The foldings of the Dynkin diagrams $\cD=\DA_{2n-1}$ ($n\geq 2$), $\DD_n$ ($n\geq 4$), $\DB_3$, induce nestings of types:
$$
(\DA_{2n-1},1,2n-1),\quad (\DD_{n},n-1,n),\quad (\DB_{3},1,3).
$$
\end{proposition}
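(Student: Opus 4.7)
The proof will be a direct application of the general construction developed in the paragraphs immediately preceding the proposition statement. For each of the three foldings $p:\cD\to\cD'$ associated to the inclusions $G'\subset G$ in (\ref{sub:gpa}), (\ref{sub:gpd}) and (\ref{sub:gpb3}), the plan is to (i) produce disjoint nonempty subsets $I, J\subset D$ satisfying $p(I)=p(J)=p(I\cup J)$, and (ii) verify that $G'$ acts transitively on $\cD(I)$. Once both are in place, the embedding $\sigma:\cD'(p(I))\hookrightarrow \cD(I\cup J)$ from the preceding discussion is defined on the whole of $\cD(I)=\cD'(p(I))$ and is, by construction, a section of $\pi_{I\cup J,I}$; this is exactly a nesting of type $(\cD,I,J)$.

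The subsets required in (i) are read off directly from the foldings displayed graphically before the proposition. For $\DA_{2n-1}\to\DC_n$, the outer involution identifies node $i$ with node $2n-i$, so $I=\{1\}$ and $J=\{2n-1\}$ are disjoint and both project to the extremal node of $\DC_n$. For $\DD_n\to\DB_{n-1}$, the fork automorphism swaps the two tail nodes, so $I=\{n-1\}$ and $J=\{n\}$ both project to the extremal short-root node of $\DB_{n-1}$. For $\DB_3\to\DG_2$, the folding identifies the two extremal nodes of $\DB_3$, so $I=\{1\}$ and $J=\{3\}$ both project to a single node of $\DG_2$. In every case the identity $p(I\cup J)=p(I)=p(J)$ follows at once from $p(I)=p(J)$.

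The transitivity required in (ii) is precisely the exceptional transitivity recorded in Remark~\ref{rem:special}: $\PSP(2n)$ acts transitively on $\P^{2n-1}=\DA_{2n-1}(1)$, $\PSO(2n-1)$ acts transitively on the spinor variety $\DD_n(n-1)=\DB_{n-1}(n-1)$, and $\DG_2$ acts transitively on $Q^5=\DB_3(1)=\DG_2(1)$. These three are in fact the only rational homogeneous varieties of connected classical type that admit a transitive action of a smaller simple group, which is why among the foldings (\ref{sub:gpa})--(\ref{sub:gpb3}) only these three produce nestings (the remaining ones in (\ref{sub:gpe}) and (\ref{sub:gpd4}) are ruled out at step (ii)). There is no substantive obstacle in the argument: the only verification that requires any care is the identification of which nodes of $\cD$ collapse under each folding, and this is standard bookkeeping, extractable from the explicit description of $G'\hookrightarrow G$ on Cartan subalgebras.
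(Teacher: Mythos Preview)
Your proposal is correct and follows essentially the same approach as the paper: the proposition is stated in the paper immediately after the phrase ``Summing up, we have shown the following,'' so its proof is precisely the preceding discussion, which you have accurately reconstructed---identifying the collapsing nodes under each folding and invoking Remark~\ref{rem:special} for the transitivity of the smaller group on $\cD(I)$.
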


\begin{remark}
With the exception of (\ref{sub:gpb3}), all the foldings described above are induced by {\em outer automorphisms} (cf. \cite[Appendix D.~40]{FH}) of the corresponding groups $G$, in the sense that in each case there exists an outer automorphism $\phi$ of $G$ such that the subgroup $G'$ of the pair $(G',G)$ can be written as 
$$G'=\{g\in G|\,\,\phi(g)=g\}.
$$
\end{remark}

\subsection{Nestings of rational homogeneous bundles}\label{ssec:famnest}

In this section we will assume that $\Nest(\cD,I,J)\neq\emptyset$, and introduce a relative notion of nesting: 
\begin{definition}
Let $X$ be a complex manifold, 
and let $\cE\to X$ be a  
principal $G$-bundle determined by a cocycle $\theta\in \HH^1(X,G)$. Let $Y(I):=\cE\times^G\cD(I)$, $Y(I\cup J):=\cE\times^G\cD(I\cup J)$ be the associated $\cD(I)$ and $\cD(I\cup J)$-bundles over $X$, and denote by $\pi_{I\cup J,I}:Y(I\cup J)\to Y(I)$ the natural projection. A section of $\pi_{I\cup J,I}$ is called a {\em nesting of type $(\cE,I, J)$}.
\end{definition}

In order to study nestings of type $(\cE,I, J)$, we consider the action of $G$ on $\Nest(\cD,I,J)$, which allows to construct a fiber bundle:
$$
\cN(\cE,I, J):=\cE\times^G \Nest(\cD,I,J)\to X,
$$
whose fibers are isomorphic to  $\Nest(\cD,I,J)$. We may state the following:
\begin{lemma}\label{lem:nestfam}
Nestings of type $(\cE,I,J)$ are parametrized by the set of sections of $\cN(\cE,I, J)\to X$.
\end{lemma}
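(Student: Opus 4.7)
The plan is to establish a natural bijection between sections of $\pi_{I\cup J,I}\colon Y(I\cup J)\to Y(I)$ and sections of $\cN(\cE,I,J)\to X$ by a single global construction: pushing the universal family of $\Nest(\cD,I,J)$ through the associated-bundle functor $\cE\times^G(-)$. The whole argument will rest on the behaviour of the evaluation map
$$
\operatorname{ev}\colon \Nest(\cD,I,J)\times\cD(I)\lra \cD(I\cup J),\qquad (\sigma,p)\longmapsto\sigma(p),
$$
under the $G$-action.

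The first step is to observe that $\operatorname{ev}$ is $G$-equivariant: if $G$ acts on $\cD(I)$ and $\cD(I\cup J)$ by left translation and on $\Nest(\cD,I,J)$ via the conjugation action of Remark \ref{rem:action}, then $(g\sigma g^{-1})(gp)=g\sigma(p)$. Applying $\cE\times^G(-)$ to this equivariant morphism produces a morphism of $X$-schemes
$$
\Phi\colon\cN(\cE,I,J)\times_X Y(I)\lra Y(I\cup J),
$$
and $\pi_{I\cup J,I}\circ\Phi$ equals the projection onto $Y(I)$, since this identity can be checked fiberwise on $X$, where it is the defining property of a nesting.

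With $\Phi$ in hand, the forward direction is automatic: any section $\tau\colon X\to \cN(\cE,I,J)$ yields a section of $\pi_{I\cup J,I}$ by composing
$Y(I)\xrightarrow{((\tau\circ\pi_I),\id)}\cN(\cE,I,J)\times_X Y(I)\xrightarrow{\Phi}Y(I\cup J)$.
For the reverse direction, the plan is to recover $\tau$ locally. After choosing an open cover $\{U_\alpha\}$ of $X$ over which $\cE$ trivializes, one has $Y(I)|_{U_\alpha}\cong U_\alpha\times\cD(I)$ and $Y(I\cup J)|_{U_\alpha}\cong U_\alpha\times\cD(I\cup J)$; the restriction of any section $s$ of $\pi_{I\cup J,I}$ becomes a morphism $U_\alpha\times\cD(I)\to U_\alpha\times\cD(I\cup J)$ over $U_\alpha$ that splits the second projection, which by the functor-of-points description of $\Nest(\cD,I,J)$ corresponds uniquely to a morphism $\tau_\alpha\colon U_\alpha\to\Nest(\cD,I,J)$.

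The main (and essentially only) point requiring verification is that the $\tau_\alpha$'s glue. This is exactly what the $G$-equivariance of $\operatorname{ev}$ provides: the trivializations of $\cE$ on an overlap $U_\alpha\cap U_\beta$ differ by a transition function $g_{\alpha\beta}\colon U_\alpha\cap U_\beta\to G$ whose action on $\cD(I)$ and $\cD(I\cup J)$ is by left translation, so compatibility of $s$ with these transitions translates, via $\operatorname{ev}$, into compatibility of the $\tau_\alpha$'s with the conjugation action on $\Nest(\cD,I,J)$; that is precisely the gluing condition for a section of $\cN(\cE,I,J)=\cE\times^G\Nest(\cD,I,J)$. The two constructions are inverse to each other by construction, which will complete the proof.
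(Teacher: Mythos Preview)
Your proof is correct and follows essentially the same approach as the paper: both arguments pass to a trivializing open cover, invoke the universal property of $\Nest(\cD,I,J)$ to convert the local restrictions of a nesting into maps $U_\alpha\to\Nest(\cD,I,J)$, and then observe that the gluing condition for these local maps (with respect to the conjugation action) is exactly the gluing condition for a section of $\cN(\cE,I,J)$, since all three bundles are built from the same principal $G$-bundle. Your additional global packaging via the $G$-equivariant evaluation map and the induced morphism $\Phi$ is a nice conceptual wrapper that makes the forward direction immediate, but the underlying mechanism is the same as in the paper's more compressed treatment.
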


\begin{proof}
The bundles $Y(I)$ and $Y(I\cup J)$ trivialize with respect to the same open covering $\{U_i\}$ of $X$. By the universal property of $\Nest(\cD,I,J)$, the restriction of a nesting $\sigma$ of type $(\cE,I,J)$ to $U_i\times \cD(I)$ is given by a map $\sigma_i:U_i\to \Nest(\cD,I,J)$, for every $U_i$ of the covering. Since the bundle $\cN(\cE,I, J)$ is constructed upon the same principal bundle as $Y(I)$ and $Y(I\cup J)$, saying that the maps $\sigma_{|U_i\times \cD(I)}$ glue together to form a section of $\pi_{I\cup J,I}$ is equivalent to saying that the maps $\sigma_i$ glue together to give a section of $\cN(\cE,I,J)$.
\end{proof}

\begin{remark}
As in the case of nestings of homogeneous varieties, the above result allows us to consider the set of nestings of type $(\cE,I,J)$ as the set of closed points of a scheme $\Nest(\cE,I,J)$.
\end{remark}

Let us consider now the isotropy subgroup $G_\sigma$ of an element $\sigma\in \Nest(\cD,I,J)$ in  $G$, introduced in Remark \ref{rem:action}.
Together with \cite[Theorem~2.3]{Hus}, Lemma \ref{lem:nestfam} provides the following:
\begin{corollary}\label{cor:red}
Let $\sigma$ be a nesting of type $(\cD,I,J)$, and assume that $G$ acts transitively on $\Nest(\cD,I,J)\neq\emptyset$. Let $\cE\to X$ be a principal $G$-bundle over a complex manifold $X$. Then a nesting of type $(\cE,I,J)$ exists if and only if $\cE$ reduces to a $G_\sigma$-principal bundle, that is if the cocycle $\theta \in \HH^1(X,G)$ defining $\cE$ belongs to the image of the natural map from $\HH^1(X,G_\sigma)$. 
\end{corollary}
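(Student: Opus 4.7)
The plan is to combine Lemma \ref{lem:nestfam} with the classical correspondence between reductions of structure group and sections of associated homogeneous-fiber bundles. By Lemma \ref{lem:nestfam}, a nesting of type $(\cE,I,J)$ is the same datum as a section of the fiber bundle $\cN(\cE,I,J)=\cE\times^G\Nest(\cD,I,J)\to X$, so the task reduces to characterizing, in terms of the cocycle $\theta$, when this bundle admits a section.

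First, I would use the transitivity hypothesis: fixing the base point $\sigma\in\Nest(\cD,I,J)$, the orbit map $G\to\Nest(\cD,I,J)$, $g\mapsto g^{-1}\sigma g$, is surjective with fiber $G_\sigma$, so it induces a $G$-equivariant isomorphism $\Nest(\cD,I,J)\cong G/G_\sigma$. Substituting this into the associated-bundle construction gives
\[
\cN(\cE,I,J)=\cE\times^G(G/G_\sigma)\cong \cE/G_\sigma,
\]
the standard quotient expressing the coset-space bundle associated with $\cE$ via the subgroup $G_\sigma\subset G$.

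Second, I would invoke \cite[Theorem 2.3]{Hus}, which states that sections of $\cE/G_\sigma\to X$ are in natural bijection with reductions of the principal $G$-bundle $\cE$ to the subgroup $G_\sigma$. A reduction is by definition a $G_\sigma$-principal bundle $\cE'\to X$ together with a $G_\sigma$-equivariant embedding $\cE'\hookrightarrow\cE$, or, equivalently, the datum of a cocycle $\theta'\in\HH^1(X,G_\sigma)$ whose image under the natural map $\HH^1(X,G_\sigma)\to\HH^1(X,G)$ is $\theta$. Chaining these equivalences yields the statement.

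The only point that needs a brief verification is that the $G$-equivariant isomorphism $\Nest(\cD,I,J)\cong G/G_\sigma$ holds as schemes, so that the identification $\cN(\cE,I,J)\cong\cE/G_\sigma$ is compatible with the analytic/scheme-theoretic setup in which $\HH^1(X,-)$ is taken. Since $\Nest(\cD,I,J)$ is constructed as a subscheme of $\Hom(\cD(I),\cD(I\cup J))$ on which $G$ acts algebraically (Remark \ref{rem:action}), transitivity of the $G$-action together with the smoothness of the orbit map $G\to G\cdot\sigma$ gives the required isomorphism of $G$-varieties, and no further subtleties arise. This is arguably the only nontrivial check; the rest of the argument is a direct application of Lemma \ref{lem:nestfam} and the reduction-of-structure-group principle.
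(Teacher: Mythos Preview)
Your proposal is correct and follows essentially the same approach as the paper: the paper's proof is nothing more than the sentence ``Together with \cite[Theorem~2.3]{Hus}, Lemma \ref{lem:nestfam} provides the following,'' and you have unpacked exactly that---using Lemma \ref{lem:nestfam} to identify nestings with sections of $\cN(\cE,I,J)$, the transitivity hypothesis to write the fiber as $G/G_\sigma$, and Husemoller's theorem for the equivalence with reductions of structure group.
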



\section{Nestings on special rational homogeneous varieties}\label{sec:special}

The goal of this section is to describe completely the scheme $\Nest(\cD,I,J)$ in the cases in which $(\cD,I,J)$ is equal to $(\DA_{m},1,m)$, $(\DB_3,1,3)$, and $(\DD_n,n-1,n)$. More concretely, we will study each case separately in the following sections, in order to show the following:

\begin{theorem}
Let  $(\cD,I,J)$ be equal to $(\DA_{m},1,m)$ ($m\geq 2$), $(\DB_3,1,3)$, or $(\DD_n,n-1,n)$ ($n\geq 4$). If $m$ is even, then the scheme $\Nest(\DA_{m},1,m)$ is empty. In the remaining cases $\Nest(\cD,I,J)$ is isomorphic to a nonempty Zariski open set of a projective representation of the adjoint group $G$ of $\cD$, and $G$ acts transitively on $\Nest(\cD,I,J)$. 
\end{theorem}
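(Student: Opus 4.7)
The plan is to treat each of the three cases separately, realizing $\Nest(\cD,I,J)$ as the $G$-orbit of the nesting $\sigma$ built from the diagram folding in Proposition \ref{prop:foldnest} and identifying that orbit with an open subset of a projective representation of $G$. In each case the isotropy subgroup $G_\sigma$ should coincide with the smaller group $G'\subset G$ involved in the corresponding folding, so that $G\cdot\sigma\iso G/G'$; the remaining work is then to embed $G/G'$ as an open subset of a projective $G$-representation and to prove this embedding exhausts $\Nest(\cD,I,J)$.

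For $(\DA_m,1,m)$ the argument is essentially the content of \cite{DCR}. If $m=2n$ is even, no nestings exist. If $m=2n-1$ is odd, every nesting has the form $[v]\mapsto([v],[v]^{\perp_\omega})$ for a nondegenerate alternating form $\omega\in \wedge^2 V^\vee$ on $V=\C^{2n}$, determined uniquely up to scalar, giving a $\PGL(V)$-equivariant bijection between $\Nest(\DA_{2n-1},1,2n-1)$ and the complement of the Pfaffian hypersurface in $\P(\wedge^2 V^\vee)$; Darboux's theorem ensures that this complement is a single $\PGL(V)$-orbit.

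For $(\DD_n,n-1,n)$ I would consider the map $[H]\mapsto\sigma_H$, where $\sigma_H(\P^{n-1}_a)=(\P^{n-1}_a\cap H\subset\P^{n-1}_a)$ for a smooth hyperplane section $H\subset Q^{2n-2}$. The parameter space of smooth sections is the complement $U\subset(\P^{2n-1})^\vee\iso\P^{2n-1}$ of the quadric polar to $Q^{2n-2}$; the stabilizer of a non-isotropic line under the $\PSO(2n)$-action is $\PSO(2n-1)$, so $U$ is a single $\PSO(2n)$-orbit, and the map $[H]\mapsto\sigma_H$ is a $G$-equivariant injection $U\hookrightarrow\Nest$. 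For $(\DB_3,1,3)$ the target representation is $\P(S)\iso\P^7$, where $S$ is the $8$-dimensional spin representation of $\Spin(7)$ (the center $\{\pm 1\}\subset\Spin(7)$ acts by scalars, so $\P(S)$ is a projective $\SO(7)$-representation). The open orbit $U\subset\P(S)$ consists of spinors whose $\SO(7)$-stabilizer is $\DG_2$, and $U\iso \SO(7)/\DG_2$; the octonionic construction of Section \ref{ssec:exB3} assigns to a generic spinor $[\phi]$ an octonion multiplication on $\C^8$, and thereby a canonical plane $\pi_p\subset Q^5$ through each $p\in Q^5$, producing a $G$-equivariant embedding $U\hookrightarrow \Nest(\DB_3,1,3)$.

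The main obstacle, shared by the latter two cases, is surjectivity: ruling out nestings that do not arise from a folding. The cleanest route I envisage is a normal bundle computation that bounds $\dim\Nest(\cD,I,J)$ above by $\dim G/G'$; combined with the equivariant embedding constructed above and the irreducibility of $U\iso G/G'$, this forces $\Nest(\cD,I,J)=G\cdot\sigma$, simultaneously delivering both the Zariski-open identification and the transitivity of the $G$-action claimed in the theorem.
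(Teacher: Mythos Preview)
Your outline is sound up to the point you flag yourself: surjectivity of $U\hookrightarrow\Nest(\cD,I,J)$. But the route you propose for this step does not close the gap.

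First, the normal bundle computation is circular. For each of the bundles $\pi:\cD(I\cup J)\to\cD(I)$ in question, a section $\sigma$ corresponds to a line bundle quotient of a fixed universal vector bundle (e.g.\ $T_{\P(V)}\twoheadrightarrow\cO(d)$ for $\DA_m$, $\cS^\vee\twoheadrightarrow\cO(\ell H)$ for $\DB_3$, $\cQ^\vee(H)\twoheadrightarrow\cO(kH)$ for $\DD_n$). The normal bundle $N_\sigma\cong\sigma^*T_\pi$ is then $\cK^\vee\otimes L$, where $\cK$ is the kernel and $L$ the quotient; its $h^0$ depends on the integer $d,\ell,k$. So to bound $\dim T_\sigma\Nest$ at an \emph{arbitrary} $\sigma$ you must already know which $d,\ell,k$ can occur --- and that is precisely the classification you are trying to avoid.

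Second, even granting a uniform bound $\dim\Nest\le\dim G/G'$, the conclusion $\Nest=G\cdot\sigma$ does not follow. You would obtain only that $U$ is open in the irreducible component containing it; nothing rules out other components, each $G$-stable since $G$ is connected. Irreducibility of $U$ buys you nothing about irreducibility of $\Nest$.

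The paper proceeds by direct classification instead. In each case one shows, via Chern class identities, that the quotient line bundle is forced: for $\DA_m$ one computes $c_m(\Omega_{\P(V)}(d))=0$ and solves $(1-d)^{m+1}=1$ to get $d=2$ and $m$ odd; for $\DB_3$ the Chern polynomial of $\cS^\vee$ forces $\ell=1$; for $\DD_n$ one rules out $k\neq1$ by restricting the resulting exact sequence to a maximal linear $\P^{n-1}\subset\DD_n(n-1)$ (where $\cQ$ splits as $T_{\P^{n-1}}(-1)\oplus\cO(1)$) and deriving a numerical contradiction. Once the quotient line bundle is pinned down, the space of such quotients is visibly $U$, and the equivariant bijection $U\to\Nest$ is established directly rather than via dimension counting.
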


\subsection{Nestings of type $\bm{(\DA_m,1,m)}$}\label{ssec:exAn}

Let us start with the case of the Dynkin diagram $\DA_{m}$, $m\geq 2$, and the contraction
$$\pi:=\pi_{1m,1}:\DA_{m}(1,m)\cong\P(T_{\P(V)})\lra \DA_{m}(1),$$
identifying $\DA_{m}(1)$ with the 
projectivization $\P(V)$ of a complex vector space $V$ of dimension $m+1$.

\begin{example}[Cf. {\cite[Example 2]{DCR}}]\label{ex:nestA}
Assume that $m$ is odd, let $\omega \in \bigwedge^2V^\vee$ be an antisymmetric form of maximal rank and define 
$$\sigma_\omega:\DA_m(1) \to \DA_m(1,m)$$
as the map associating to a point  $P$ in $\P(V)$, thought of as a codimension one subspace $V_m \subset V$, 
the flag  $(P\subset H(P))$, where $H(P)$ is the hyperplane corresponding to the one dimensional subspace $V_m^{\perp_\omega}$.
\end{example}

We will now prove that all the nestings of type $(\DA_m,1,m)$ are of this type, showing that:

\begin{proposition}\label{prop:nestAn}
The scheme $\Nest(\DA_m,1,m)$ is empty when $m$ is even, or isomorphic to the open set in $\P(\bigwedge^2V^\vee)$ of classes of antisymmetric forms of maximal rank, when $m$ is odd. In this case $\PGL(V)$ acts transitively on it.
\end{proposition}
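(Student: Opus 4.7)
The strategy is to translate nestings of type $(\DA_m,1,m)$ into quotient line bundles of the universal quotient $\cQ$ on $\P(V)$, and then constrain such quotients by a Chern class computation.

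First, I would use the identification $\DA_m(1,m)\cong\P(T_{\P(V)})\cong\P(\cQ)$, under which a nesting $\sigma$ corresponds to a surjection $\cQ\twoheadrightarrow\cL$ onto a line bundle $\cL$. Writing $\cL=\cO_{\P(V)}(d)$, global generation of $\cQ$ and the dual Euler sequence yield $d\geq 1$ (indeed, $H^0(\cQ^\vee(d))=0$ for $d\leq 0$). The key step is then to study the kernel $\cK$ of $\cQ\twoheadrightarrow\cO(d)$: it is a rank $m-1$ bundle on $\P^m$, so $c_m(\cK)=0$. The Euler sequence gives $c(\cQ)=\sum_{i=0}^m H^i$ in $\HH^\bullet(\P(V))=\Z[H]/(H^{m+1})$, and extracting the degree-$m$ coefficient of $c(\cK)=c(\cQ)/(1+dH)$ reduces to the identity
$$
\sum_{j=0}^{m}(-d)^j=\frac{1-(-d)^{m+1}}{1+d}=0,
$$
i.e. $(-d)^{m+1}=1$. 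Among integers $d\geq 1$, this equation has no solution when $m$ is even and forces $d=1$ when $m$ is odd.

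With $d=1$ and $m$ odd, I would then identify the space of candidate surjections. The dual Euler sequence twisted by $\cO(1)$ gives a short exact sequence $0\to\cQ^\vee(1)\to V^\vee\otimes V^\vee\otimes\cO\to\cO(2)\to 0$ and, taking global sections, identifies $H^0(\cQ^\vee(1))$ with the kernel of the multiplication $V^\vee\otimes V^\vee\to S^2V^\vee$, namely $\bigwedge^2V^\vee$. An element $\omega\in\bigwedge^2V^\vee$ defines a map $\cQ\to\cO(1)$ whose fiber at $[w]\in\P(V)$ sends $v+\langle w\rangle\mapsto\omega(v,w)$, and this map is pointwise surjective if and only if $w\notin\ker(\omega)$, i.e. if and only if $\omega$ has maximal rank $m+1$. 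This recovers the construction of Example \ref{ex:nestA} and identifies $\Nest(\DA_m,1,m)$ with the open subset of $\P(\bigwedge^2V^\vee)$ of projective classes of nondegenerate antisymmetric forms. Transitivity of $\PGL(V)$ on this set follows from the classification of antisymmetric bilinear forms up to $\GL(V)$-equivalence by their rank.

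The main obstacle is really step two: the Chern class calculation. It is conceptually simple once the right identification of sections with quotient line bundles is fixed, but requires careful bookkeeping of conventions (classical versus Grothendieck projectivization, and the shift $T_{\P(V)}=\cQ(1)$) so that the degree $d$ matches the degree of the induced morphism $\P(V)\to\P(V^\vee)$.
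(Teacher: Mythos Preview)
Your proposal is correct and follows essentially the same approach as the paper: translate the section into a quotient line bundle, use a Chern class vanishing (top Chern class of the rank $m-1$ kernel) to pin down the degree, and then identify the parameter space of surjections with nondegenerate skew forms. The only cosmetic difference is that you work with $\cQ$ while the paper works with $T_{\P(V)}=\cQ(1)$, so your $d=1$ is the paper's $d=2$; your own final remark about the shift $T_{\P(V)}=\cQ(1)$ already flags this.
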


\begin{proof}
The existence of a section $\sigma$ of $\pi$ is equivalent to the existence of a short exact sequence:
$$
0\ra \cN\lra T_{\P(V)}\stackrel{\theta}{\lra}\cO_{\P(V)}(d)\ra 0, 
$$
where $\cN$ is a vector subbundle of $T_{\P(V)}$ of rank $m-1$. 

Standard computations show that this is only possible if $m$ is odd and $d=2$, see \cite[Page~80]{OSS}. In fact, the existence of such exact sequence leads to the vanishing of the top Chern class $c_m(\Omega_{\P(V)}(d))$. On the other hand, we can compute this Chern class by using the Euler sequence, obtaining 
$$0=(-1)^m \sum_{i=0}^m {m+1 \choose i} (-d)^{m-i}.$$ 
Note that this tells us that $d\neq 0$. Moreover, 
multiplying by $-d$, we get $0=(1-d)^{m+1}-1$ and so we conclude that $d=2$ and $m$ is odd. 

Let us then write $m=2n-1$, $n\geq 2$. The above construction tells us that $\theta$ may be identified with an element in 
$$\HH^0(\P(V),\Omega_{\P(V)}(2))\cong \bigwedge^2V\subset \Hom(V^\vee,V),$$
that we denote also by $\theta$. The surjectivity of $\,\theta:T_{\P(V)}\to \cO_{\P(V)}(2)$ is then equivalent to the maximality of the rank of $\theta$ as an antisymmetric linear map $\theta:V^\vee\to V$. 


Denoting by $U\subset \P(\bigwedge^2V^\vee)$ the set of classes of maximal rank antisymmetric forms modulo homotheties, the above construction defines a family of nestings $U\times \P(V)\to U\times\P(T_{\P(V)})$, and so we have a morphism $\psi:U\to\Nest(\DA_{2n-1},1,{2n-1})$, which is surjective by our arguments above. Moreover, from our description above,  two antisymmetric linear maps provide the same nesting if and only if they are proportional,  i.e., $\psi$ is bijective. 

The variety $U$ is the unique open orbit of the standard action of  $\PGL(V)$ on $\P(\bigwedge^2V^\vee)$ and,  considering on $\Nest(\DA_{2n-1},1,{2n-1})$ the action described in Section \ref{sec:nesting}, one may easily check that the map $\psi:U\to \Nest(\DA_{2n-1},1,{2n-1})$ is equivariant. But then the action on $ \Nest(\DA_{2n-1},1,{2n-1})$ is transitive, and it follows that $U$ and $\Nest(\DA_{2n-1},1,{2n-1})$ are isomorphic. 
\end{proof}

 
\subsection{Nestings of type $\bm{(\DB_3,1,3)}$}\label{ssec:exB3}

In this section we will consider the following $\DB_3$-varieties:
$$
\xymatrix{&\DB_3(1,3)\ar[dl]_{\pi}\ar[dr]^p&\\\DB_3(1)&&\DB_3(3)}
$$
The variety $\DB_3(1)$ is a smooth $5$-dimensional quadric, and 
 $\DB_3(1,3)$ is a $\P^3$-bundle, which is the 
projectivization of the dual $\cS^\vee$ of the {\em spinor bundle} $\cS$ on $\DB_3(1)$ (see \cite{Ott} for details). The bundle $\cS$ is isomorphic to $\cS^\vee(-H)$  (cf. \cite[Thm.~2.8.ii]{Ott}), where $H$ denotes the ample generator of $\Pic(\DB_3(1))$, which is the class of a hyperplane section of the natural embedding of $\DB_3(1)$ as a quadric in $\P^6$. On the other hand, the variety $\DB_3(3)$ is a smooth $6$-dimensional quadric, appearing as the closed orbit of the action of the group $\Spin(7)$ on the 
projectivization of the dual of the ($8$-dimensional) spin representation, which is isomorphic to  $V_S:=\HH^0(\DB_3(1),\cS^\vee)=\HH^0(\DB_3(1),\cS(H))$. The projection onto $\DB_3(3)$ is a $\P^2$-bundle; more precisely, it is the universal family of planes contained in $\DB_3(1)$. 

In view of Proposition \ref{prop:foldnest}, we may construct a nesting of type $(\DB_3,1,3)$ upon a folding of $\DB_3$, that is upon the action of a proper subgroup $\DG_2\subset\PSO(7)$ on the $\DB_3$-varieties involved; we will show now that, essentially, all the nestings of  type $(\DB_3,1,3)$ are constructed in this way. 

Let us start by constructing a family of nestings of type $(\DB_3,1,3)$ by varying the $\DG_2$-structure of $\DB_3(1)$, that we will present written in language of octonions, for which we follow  \cite{O} (see also the references therein).

\begin{example}
Let $\O$ be the algebra of {\em complexified octonions}, defined as a $\C$-algebra with  generators $1,e_1,\dots,e_7$, and relations:
$$
\begin{array}{l}
e_i^2=-1,\quad e_ie_j=-e_je_i,\quad\mbox{for all }i,j,\\ e_1e_2=e_3,\,\, e_1e_4=e_5,\,\, e_1e_6=e_7,\,\, e_2e_4=-e_6,\,\, e_2e_5=e_7,\,\, e_3e_4=e_7,\,\, e_3e_5=e_6.
\end{array}
$$
The algebraic group $\DG_2$ can then be defined as the group of automorphisms of $\O$.

The conjugate of an element $x=x_0+\sum_{i=1}^7x_ie_i$ is defined as $x^*=x_0-\sum_{i=1}^7x_ie_i$, so that we have:
$$
N(x):=x^*x=\sum_{i=0}^7x_i^2\in \C.
$$ 
The product of octonions preserves $N$, so an element $a\in \O$ is invertible if and only if $N(a)\neq 0$. We consider now the following subvarieties of the projectivization $\P(\O^\vee)\cong\P^7$, which are invariant by the action of $\DG_2$:
$$
Q^6:=\{[x]\in\P(\O^\vee)|\,\, x^*x=0\}\subset \P(\O^\vee),\quad Q^5:=\{[x]\in\P(\O^\vee)|\,\, x^2=0\}\subset \P(\O^\vee).
$$
The first variety is a $6$-dimensional smooth quadric $Q^6\subset \P^7$, of equation $\sum_{i=0}^7x_i^2=0$, and one may easily check that the second is the intersection of $Q^6$ with the hyperplane $x_0=0$. The group $\DG_2$ acts transitively on $Q^5$ and $Q^6\setminus Q^5$ (cf. \cite[p.~89--91]{O}). 

The quadric $Q^5$ can be seen as a parameter space of a family of planes in itself; in fact, for every $P=[x]\in Q^5$, the set $s_1(P):=\{[y]\in Q^5|,\,\ xy=0\}$ is a plane in $Q^5$ containing $P$. The map $s_1$, seen as a section of $\pi:\DB_3(1,3)\to \DB_3(1)\cong Q^5$, is the nesting defined by the $\DG_2$-structure of $Q^5$. 

More generally, given any $A=[a]\in\P(\O^\vee)\setminus Q^6$ (so that $a\in\O$ is invertible), we may construct other sections, by setting:
$$
P=[x]\mapsto s_A(P):=\{[y]\in Q^5|,\,\ x(ya)=0\}. 
$$
Note that, by the properties of the octonian multiplication, $x(xa)=(xx)a=0a=0$, so the plane $s_A(P)$ passes through $P$.
This construction is of course equivalent to modifying the structure of $\DG_2$-variety on $\Q^5$ by pulling it back via the right-multiplication with $a$.
\end{example} 

Note that, in the above description, we have a natural linear isomorphism $\O^\vee\cong V_S$ \cite[Section~1.4]{O}.  The following statement tells us that every nesting of type $(\DB_3,1,3)$ can be constructed as above. 

\begin{proposition}\label{prop:nestB3} The scheme $\Nest(\DB_3,1,3)$ is isomorphic to the open set in $\P(V_S^\vee)$ parametrizing smooth hyperplane sections of $\DB_3(3)\subset\P(V_S)$, and $\PSO(8)$ acts transitively on it.
\end{proposition}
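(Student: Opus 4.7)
Following the strategy in Proposition \ref{prop:nestAn}, the construction in the Example defines a morphism
$$\psi: U \lra \Nest(\DB_3,1,3),\qquad [a]\longmapsto s_{[a]},$$
where $U\subset \P(V_S^\vee)$ is the open subset of smooth hyperplane sections of $\DB_3(3)\subset \P(V_S)$. Via the identification $V_S\cong \O^\vee$ and the polarity isomorphism $\P(V_S^\vee)\cong \P(V_S)$ induced by the nondegenerate quadratic form $N$, the variety $U$ corresponds to the complement $\P(V_S)\setminus \DB_3(3)$, parametrizing classes of non-pure spinors. The plan is to show that $\psi$ is an isomorphism of schemes, and that $\PSO(8)$ acts transitively on its target.

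The core step is to identify $\Nest(\DB_3,1,3)$ directly with $\P(V_S)\setminus \DB_3(3)$. Any nesting $s$ of $\pi=\P(\cS^\vee)\to \DB_3(1)$ corresponds to a rank-one quotient $\cS^\vee\twoheadrightarrow L$, or dually to a line subbundle $L^{-1}\hookrightarrow \cS$, with $L\simeq \cO_{\DB_3(1)}(aH)$ for some $a\in\Z$. The existence of the inclusion as subbundles is equivalent to the existence of a nowhere-vanishing global section of $\cS\otimes L = \cS^\vee((a-1)H)$ (using $\cS^\vee = \cS(H)$). The vanishing $\HH^0(\DB_3(1),\cS^\vee(bH))=0$ for $b\leq -1$ forces $a\geq 1$, and for $a\geq 2$ a Chern class computation shows that the top Chern class $c_4(\cS^\vee((a-1)H))$ is strictly positive (since $\cS^\vee$ is globally generated, its Chern classes are non-negative, and the resulting polynomial in $a$ has no positive integer roots $\geq 2$), ruling out nowhere-vanishing sections. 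Hence $a=1$, and the nesting is determined up to scalar by an element $[v]\in \P(V_S)=\P(\HH^0(\cS^\vee))$ whose associated spinor is nowhere-vanishing on $\DB_3(1)$. A spinor $v$ vanishes somewhere on $\DB_3(1)$ precisely when it is pure, i.e., $[v]\in \DB_3(3)$; hence $\Nest(\DB_3,1,3)\cong \P(V_S)\setminus \DB_3(3)$, and a direct verification in the octonionic description confirms that this bijection agrees with $\psi$ under the polarity.

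Finally, $\PSO(8)=\Aut(\DB_3(3))$ acts on $\P(V_S)$ preserving the quadric, and since $N$ takes a unique nonzero value on $V_S$ up to $\C^*$-scaling, this action is transitive on $\P(V_S)\setminus \DB_3(3)$. Transferring via $\psi$ yields the desired transitive $\PSO(8)$-action on $\Nest(\DB_3,1,3)$. The main obstacle in this plan is the Chern class computation ruling out $a\geq 2$, which requires explicit knowledge of the Chern classes of the spinor bundle on $\DB_3(1)$, together with the classical identification of $\DB_3(3)\subset \P(V_S)$ as the locus of pure spinors.
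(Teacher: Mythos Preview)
Your approach is essentially the paper's: interpret a nesting as a surjection $\cS^\vee\twoheadrightarrow\cO(\ell H)$, pin down $\ell=1$ by a Chern–class computation, and then identify the nesting with a point of $U$. The paper determines $\ell$ by writing $c_t(\cF^\vee)(1+\ell Ht)=c_t(\cS^\vee)=1+2Ht+2H^2t^2+H^3t^3$ and solving the resulting equation $\ell(\ell-1)(\ell^2-\ell+1)=0$, excluding $\ell=0$ via $\HH^0(\cS)=0$; your computation of $c_4(\cS^\vee((\ell-1)H))$ and its positivity for $\ell\geq 2$ is the same calculation in a different guise. For the identification with $U$, the paper argues geometrically that the composite $\DB_3(1)\to\DB_3(3)\subset\P(V_S)$ is given by a complete linear subsystem of $|H|$ and hence has image a smooth hyperplane section; your shortcut ``nowhere-vanishing section of $\cS^\vee$ $\Leftrightarrow$ non-pure spinor'' is correct but deserves a line of justification (the fiber $\P^3\subset Q^6$ over $p\in\DB_3(1)$ is a maximal linear space, and a smooth hyperplane section of $Q^6$ contains no $\P^3$).

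One genuine divergence is worth flagging. The paper proves that $\PSO(7)$ --- the adjoint group of $\DB_3$ --- already acts transitively on $U$, and uses the $\PSO(7)$-equivariance of $\psi$ to upgrade the bijection to a scheme isomorphism. This stronger statement is what is actually needed later (Corollary~\ref{cor:red} and Proposition~\ref{prop:nestP1} require transitivity of the adjoint group $G$, here $\PSO(7)$, in order to speak of the isotropy subgroup $G_\sigma$). Your ``transfer the $\PSO(8)$-action via $\psi$'' is slightly circular: $\PSO(8)$ does not act on $\DB_3(1)$ or $\DB_3(1,3)$, hence has no a priori action on $\Nest(\DB_3,1,3)$, so you cannot use it to prove that $\psi$ is an isomorphism. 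Replacing $\PSO(8)$ by $\PSO(7)$ (via the spin action on $V_S$) fixes both issues at once.
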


\begin{proof}
A nesting can be interpreted as a short exact sequence $$\shse{\cF^\vee}{\cS^\vee}{\cO_{\DB_3(1)}(\ell H)}$$
where $\ell\in\Z$ is an integer. We will prove that $\ell=1$. Note first that 
$$\HH^i(\DB_3(1), \Z)= \begin{cases} \Z\left\langle H^i \right\rangle &i \le 2,\\ 
 \Z\left\langle \dfrac{H^i}{2} \right\rangle & i\ge3, \end{cases}$$
hence we may write $$c_t(\cF^\vee)=1+d_1Ht+d_2H^2t^2+d_3 H^3t^3, \quad\mbox{with }d_1,d_2,2d_3\in\Z.$$ 
Recalling (see \cite[Remark~2.9]{Ott}) 
that the Chern polynomial of $\cS^\vee$ is:  
$$c_t(\cS^\vee)=1+2Ht+2H^2t^2+H^3 t^3,
$$ 
and using the equality $c_t(\cF^\vee)(1+\ell H t)=c_t(\cS^\vee)$, we immediately get 
$$d_3\ell=0,\quad d_2 \ell+d_3=1,\quad d_1 \ell +d_2=2,\quad d_1+\ell=2.$$
It follows that $\ell(\ell^3-2\ell^2+2\ell-1)=0$, and the only possible integral solutions of this equations are $\ell=0,1$. In the first case, we get the contradiction $\HH^0(\DB_3(1),\cS) \ne 0$ (see \cite[Thm.~2.3]{Ott}), so we conclude that $\ell$ is equal to $1$.

This implies that the composition of the section of $\DB_3(1,3)\to \DB_3(1)$ with the morphism onto $\DB_3(3)$ (given by the evaluation of global section of $\cS^\vee$) provides a morphism from the $5$-dimensional quadric $\DB_3(1)$ to the $6$-dimensional quadric $\DB_3(3)\subset\P(V_S)$, given by a base point free linear subsystem of $|\cO_{\DB_3(1)}(H)|$; this is only possible if this linear system is complete, and so the image of $\DB_3(1)$ is a smooth hyperplane section of $\DB_3(3)\subset\P(V_S)$. 

Let us denote by $U\subset\P(V_S^\vee)=\P(\HH^0(\DB_3(1),\cS(H))^\vee)$ the set of smooth hyperplane sections of $\DB_3(3)$. It provides a family of sections $U\times \DB_3(1)\to\DB_3(1,3)$, and hence we get a morphism $\psi:U\to \Nest(\DB_3,1,3)$. By our previous arguments, this map is surjective and, since one may easily check that two different elements $\P(\HH^0(\DB_3(1),\cS(1)))$ provide different nestings, injective. At this point, the proof follows as in the case $\DA_n$ (see the last paragraph of the proof of Proposition \ref{prop:nestAn}), from the fact that $\PSO(7)$ acts transitively on $U$. This can be proved as follows: note that $\PSO(7)$ acts on $\P(V_S^\vee)$ with an orbit isomorphic to the quadric $\DB_3(3)^\vee$ dual to $\DB_3(3)\subset\P(V_S)$; it has no fixed points, otherwise, their polar hyperplanes would be invariant, and so would be the corresponding sections of $\DB_3(3)^\vee$, a contradiction. From this it follows that the only closed orbit of the action is $\DB_3(3)^\vee$, hence, given a point $x\in U=\P(V_S^\vee)\setminus \DB_3(3)^\vee$, the closure of its orbit must contain $\DB_3(3)^\vee$, and so it must be of maximal dimension, and this may only happen if $U$ is an orbit of the action of $\PSO(7)$. 
\end{proof}


\subsection{Nestings of type $\bm{(\DD_n,n-1,n)}$}\label{ssec:exDn}

We consider now the smooth quadric $\DD_n(1)$ of dimension $(2n-2)$, which is a quotient of the group $\PSO(2n)$ of type $\DD_n$, appearing as the closed orbit of the action of this group on the 
projectivization of its natural ($2n$-dimensional) representation $V$. We denote by $q$ the quadratic form defining the quadric $\DD_n(1)\subset \P(V)$.  It is well known that we have two families of $n$-dimensional $q$-isotropic vector subspaces of $V$, parametrized by the varieties $\DD_n(n-1),\DD_n(n)$.

The variety $\DD_n(n-1,n)$ parametrizes partial flags $V_{n-1}\subsetneq V_{n}
\subsetneq V_{n+1}$ ($\dim V_i=i$) satisfying $V_{n-1}^\perp=V_{n+1}$, $V_{n}^\perp=V_{n}\in\DD_n(n-1)$, so a nesting associates to an isotropic space $V_n\subset V$ an isotropic partial flag $V_{n-1}\subsetneq V_{n}
\subsetneq V_{n+1}$.

\begin{example}\label{ex:nestD}
Let $v\in V$ be a non isotropic vector, and define
 $$\sigma_v:\DD_n(n-1)\to\DD_n(n-1,n),$$
as the map associating to $V_n\in\DD_n(n-1)$ the flag $(V_{n}+v)^\perp\subsetneq V_{n}
\subsetneq V_{n}+v$. 
\end{example}

\begin{remark}
A  non isotropic vector $v$ corresponds to a  hyperplane $H_v \subset \P(V)$ non tangent to the quadric; considering $\DD_n(n-1,n)$ as the variety parametrizing linear spaces of dimension $n-2$ contained in $\DD_n(1)$ we can describe $\sigma_v$ as the map which associates to a linear space of dimension $n-1$, parametrized by an element in $\DD_n(n-1)$, its intersection with $H_v$.
\end{remark}

We will now show that the maps $\sigma_v$ defined in Example \ref{ex:nestD} are algebraic.
Denoting by $\cQ$ the universal quotient bundle on $\DD_n(n-1)$, whose projectivization is $\DD_n(1,n-1)$, and whose space of global sections is $V$, the variety $\DD_n(n-1,n)$ is isomorphic to the projectivization of $\cQ^\vee$. 
The morphism $\pi_{n-1\,n,n}:\DD_n(n-1,n)\to \DD_n(n)$ is given by the evaluation of global sections of the first globally generated twist of $\cQ^\vee$, which is $\cQ^\vee(H)$, where  $H$ is  the ample generator of the Picard group of $\DD_n(n-1)$.

\begin{lemma}\label{lem:GstQ}
Let $v\in V$ be a non isotropic vector. Then the map $\sigma_v$ is algebraic, hence it defines a nesting of type $(\DD_n,n-1,n)$.
\end{lemma}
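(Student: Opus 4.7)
The plan is to realize $\sigma_v$ as the section of $\pi_{n-1\,n,n-1}:\DD_n(n-1,n)\to\DD_n(n-1)$ associated, via the identification $\DD_n(n-1,n)\cong\P(\cQ^\vee)$, with a naturally defined line subbundle of the universal quotient bundle $\cQ$ on $\DD_n(n-1)$. Under this identification (Grothendieck convention, as in Section \ref{ssec:lemmata}), sections of $\pi_{n-1\,n,n-1}$ correspond bijectively to surjections $\cQ^\vee\twoheadrightarrow\cL$ onto line bundles, equivalently to line subbundles $\cL^\vee\hookrightarrow\cQ$. So it suffices to produce such a subbundle associated with $v$ and verify that the resulting section agrees with $\sigma_v$ pointwise.

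The construction is direct. The constant section $\cO_{\DD_n(n-1)}\to\cO_{\DD_n(n-1)}\otimes V$ determined by $v$, composed with the universal surjection $\cO_{\DD_n(n-1)}\otimes V\twoheadrightarrow\cQ$, yields a morphism $\tilde v:\cO_{\DD_n(n-1)}\to\cQ$ whose fibre at $V_n\in\DD_n(n-1)$ sends $1$ to $v+V_n\in V/V_n$. Injectivity of $\tilde v$ on every fibre is equivalent to $v\notin V_n$ for every $V_n\in\DD_n(n-1)$; since each such $V_n$ is $q$-isotropic while $q(v)\ne 0$, this holds identically. Hence $\tilde v$ is a line subbundle of $\cQ$, and dualizing produces a section $s_v$ of $\pi_{n-1\,n,n-1}$.

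Finally I would check that $s_v$ coincides pointwise with $\sigma_v$. By construction $s_v(V_n)$ is the point of the fibre $\P((V/V_n)^\vee)$ corresponding to the line $\langle v+V_n\rangle\subset V/V_n$, which encodes the $(n+1)$-dimensional subspace $V_{n+1}:=V_n+v$. Setting $V_{n-1}:=V_{n+1}^\perp=V_n\cap v^\perp$ (whose dimension is $n-1$ because $v\notin V_n^\perp=V_n$), the resulting flag $V_{n-1}\subsetneq V_n\subsetneq V_{n+1}$ matches exactly the definition of $\sigma_v(V_n)$. The only nontrivial step in the argument is verifying that $\tilde v$ is a subbundle rather than merely injective as a sheaf map; this is precisely where the non-isotropy hypothesis $q(v)\neq 0$ enters, and it is the key geometric input of the proof.
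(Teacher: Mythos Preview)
Your proof is correct and follows essentially the same approach as the paper: both identify the global section $\cO_{\DD_n(n-1)}\to\cQ$ given by $v$, observe that non-isotropy of $v$ is precisely what makes it a line subbundle, and then dualize to obtain the required section of $\P(\cQ^\vee)\cong\DD_n(n-1,n)$. Your version spells out the pointwise verification that the resulting section agrees with $\sigma_v$ more explicitly than the paper does, but the argument is the same.
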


\begin{proof}
Saying that $v\in V=\HH^0(\DD_n(1),\cQ)$ is non isotropic is equivalent to saying that the corresponding global section  $s_v:\cO_{\DD_n(n-1)}\to \cQ$ is everywhere injective. Dualizing and tensoring with $\cO_{\DD_n(n-1)}(H)$,  we obtain a surjective morphism
$$
s_v^t:{\cQ^\vee(H)}\to{\cO_{\DD_n(n-1)}(H)},
$$
whose projectivization is the section  $\sigma_v:\DD_n(n-1)\to\DD_n(n-1,n)$.
\end{proof}

We will now prove that all the nestings of type $(\DD_n,n-1,n)$ are constructed as above. Along the proof we will need to consider the restriction of the universal quotient bundle $\cQ$ on $\DD_n(n-1)$ to the linear spaces of dimension $n-1$ contained in $\DD_n(n-1)$, which is described in the following: 

\begin{lemma}\label{lem:restrictionQ}
Let $\Lambda \subset \DD_n(n-1)$ be a linear space of dimension $n-1$. Then
 $$\cQ_{|\Lambda}\cong T_{\Lambda}(-1)\oplus \cO_{\Lambda}(1).$$
\end{lemma}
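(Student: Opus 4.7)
The plan is to exhibit $\cQ|_\Lambda$ as an extension of $T_\Lambda(-1)$ by $\cO_\Lambda(1)$ and then split that extension by a cohomology vanishing. To do this cleanly, I would first identify $\Lambda$ concretely inside $\DD_n(n-1)$.

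Every $(n-1)$-dimensional linear subspace of the spinor variety $\DD_n(n-1)$ is (up to the $\PSO(2n)$-action, which is transitive on such subspaces) a fiber of the projection $\pi_2:\DD_n(n-1,n)\to\DD_n(n)$. Fixing $V_n^+\in\DD_n(n)$, the corresponding fiber is parametrized by choosing an $(n-1)$-dimensional subspace $V_{n-1}\subset V_n^+$, which is automatically isotropic and extends uniquely to a maximal isotropic $V_n^-$ in the family $\DD_n(n-1)$; so $\Lambda\cong G(n-1,V_n^+)\cong\P((V_n^+)^\vee)$. Under this identification, the tautological subbundle $\mathcal U\subset V_n^+\otimes\cO_\Lambda$ with fiber $V_{n-1}$ is $\Omega_\Lambda(1)$, and the tautological rank-one quotient with fiber $V_n^+/V_{n-1}$ is $\cO_\Lambda(1)$.

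Next, I would use the inclusion $\cQ^\vee\hookrightarrow V\otimes\cO_{\DD_n(n-1)}$ dual to the evaluation morphism coming from $V=\HH^0(\cQ)$, which on fibers is the inclusion $V_n^-\hookrightarrow V$. Composing with the quotient $V\otimes\cO_\Lambda\twoheadrightarrow (V/V_n^+)\otimes\cO_\Lambda$ yields a morphism whose fibrewise image is the one-dimensional subspace $V_n^-/V_{n-1}\subset V/V_n^+$. The quadratic form identifies $V/V_n^+\cong (V_n^+)^\vee$, and since $V_n^-$ is isotropic and $V_{n-1}\subset V_n^+\cap V_n^-$, this image lies in the annihilator of $V_{n-1}$, which by dimension count equals $(V_n^+/V_{n-1})^\vee$. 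Globalising, this annihilator is exactly the tautological line subbundle $\cO_\Lambda(-1)\subset (V_n^+)^\vee\otimes\cO_\Lambda$, giving a surjection $\cQ^\vee|_\Lambda\twoheadrightarrow\cO_\Lambda(-1)$ whose kernel (with fiber $V_{n-1}$) coincides with $\mathcal U=\Omega_\Lambda(1)$. Dualizing yields the desired exact sequence
$$0\to\cO_\Lambda(1)\to\cQ|_\Lambda\to T_\Lambda(-1)\to 0.$$

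Finally, I would split this extension by computing
$$\Ext^1(T_\Lambda(-1),\cO_\Lambda(1))=\HH^1(\Lambda,\Omega_\Lambda(2)),$$
which vanishes on $\P^{n-1}$ ($n\geq 2$) by Bott's theorem, or immediately from the twisted Euler sequence. The chief point of care in this approach is the identification $V_n^-/V_{n-1}\cong (V_n^+/V_{n-1})^\vee$ via the quadratic form and its globalisation to the correct twist of $\cO_\Lambda$; once this matching of twists is verified, the rest is formal.
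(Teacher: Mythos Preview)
Your approach is quite different from the paper's, which simply cites an external result on uniform bundles together with the observation that $c_1(\cQ)=2H$. Your argument is direct and self-contained: you realise $\Lambda$ as a fibre of $\pi_2$, build the short exact sequence by hand from the tautological data, and split it via $\HH^1(\Omega_\Lambda(2))=0$. That construction and the splitting are correct.

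The gap is your assertion that $\PSO(2n)$ acts transitively on all $(n-1)$-dimensional linear subspaces of $\DD_n(n-1)$. You do not justify this, and in fact it fails for $n=4$: the spinor variety $\DD_4(3)\cong Q^6$ carries two $\PSO(8)$-orbits of $\P^3$'s (the two rulings of the quadric), only one of which arises as fibres of $\DD_4(3,4)\to\DD_4(4)$. For the other family --- namely $\Lambda'=\{V_4\in\DD_4(3):V_1\subset V_4\}$ for a fixed isotropic line $V_1$ --- one finds (via the trivial subbundle $V_1\otimes\cO_\Lambda\subset\cQ^\vee|_{\Lambda'}$ and the identification of the quotient with $T_{\Lambda'}(-2)$) that $c_3(\cQ|_{\Lambda'})=0$, whereas $c_3(T_{\Lambda'}(-1)\oplus\cO_{\Lambda'}(1))=2h^3\neq 0$. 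So the conclusion of the lemma actually fails for that family, and the transitivity claim cannot be salvaged. For $n\geq 5$ the transitivity does hold (the $\P^{n-1}$'s form a single family, parametrised by $\DD_n(n)$), but this still requires a reference or argument.

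None of this affects the application in Proposition~\ref{prop:nestDn}, which only needs \emph{some} $\Lambda\cong\P^{n-1}$ with $\cQ|_\Lambda\cong T_\Lambda(-1)\oplus\cO_\Lambda(1)$; your fibres of $\pi_2$ supply exactly that. The cleanest fix is to drop the transitivity sentence and instead begin with ``Choose $\Lambda$ to be the image in $\DD_n(n-1)$ of a fibre of $\pi_2:\DD_n(n-1,n)\to\DD_n(n)$''; then your proof goes through unchanged and proves precisely what is needed downstream.
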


\begin{proof} The statement follows from \cite[proof of Proposition 4.5]{MOS2}), recalling that $\cQ$ is a nef and uniform bundle on $\DD_n(n-1)$, with $c_1(\cQ)=2$, since it is the restriction of the universal bundle $\cQ$ on $\G(n-1,\P(V))$ via an embedding given by $\cO_{\DD_{n}(n-1)}(2)$ (see \cite[page 17]{LVZ}).
\end{proof}

\begin{proposition}\label{prop:nestDn}
The scheme $\Nest(\DD_n,n-1,n)$ is isomorphic to the open set in $\P(V^\vee)$ parametrizing smooth hyperplane sections of the $(2n-2)$-dimensional quadric $\DD_n(1)\subset\P(V)$, and the group $\PSO(2n)$ acts transitively on it.
\end{proposition}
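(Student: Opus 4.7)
The plan is to reduce the classification of nestings to that of line subbundles of the universal quotient bundle $\cQ$ on $\DD_n(n-1)$. Since $\DD_n(n-1,n)=\P(\cQ^\vee)$, a nesting is equivalent to a line subbundle $\cL^\vee\hookrightarrow\cQ$; because $\Pic(\DD_n(n-1))=\Z\langle H\rangle$, I may write $\cL^\vee=\cO(-\ell H)$ for some $\ell\in\Z$, and the key claim is that $\ell=0$, after which Example~\ref{ex:nestD} and Lemma~\ref{lem:GstQ} identify every nesting with some $\sigma_v$.

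For the inequality $\ell\geq 0$, I would invoke that $\cQ$ is an irreducible homogeneous bundle on a Picard-rank-one rational homogeneous variety, and hence $H$-stable; the slope inequality $\mu(\cL^\vee)<\mu(\cQ)$ reads $-\ell<2/n$, which for $n\geq 4$ forces $\ell\geq 0$. To obtain the opposite bound I would analyze the morphism $g:=p\circ\sigma:\DD_n(n-1)\to\DD_n(n)$. Decomposing $p^*H'$ in the basis $\{\pi^*H,\cO_{\P(\cQ^\vee)}(1)\}$ of $\Pic(\DD_n(n-1,n))$, by restriction to the $\P^{n-1}$-fibers of the two projections (each mapping isomorphically onto a linear $\P^{n-1}$ of the corresponding spinor variety, where $H$ or $H'$ restricts to the hyperplane class, and where the dual tautological bundle picks up a sign), yields $p^*H'=\cO_{\P(\cQ^\vee)}(1)+\pi^*H$, so $g^*H'=\cO((\ell+1)H)$. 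Since a section of $\pi$ has image of dimension $\binom{n}{2}>n-1$, the map $g$ cannot factor through a fiber of $p$ and is non-constant, hence finite by the Picard-rank-one hypothesis. The ramification formula gives $R=(2n-2)\ell H$: for $\ell=0$ the cover is \'etale and therefore an isomorphism by the simple-connectedness of $\DD_n(n)$, consistent with the nestings $\sigma_v$. The main obstacle is to rule out $\ell\geq 1$, in which case $g$ would be a ramified finite cover of degree $(\ell+1)^{\binom{n}{2}}$; I would combine Lemma~\ref{lem:restrictionQ}, applied both on a linear $\P^{n-1}\subset\DD_n(n-1)$ and on a minimal rational curve on which $\cQ$ splits as $\cO(1)^{\oplus 2}\oplus\cO^{\oplus(n-2)}$, with either a rigidity result on morphisms between spinor varieties or a direct cohomological vanishing argument, to produce a contradiction.

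Once $\ell=0$, the inclusion $\cO\hookrightarrow\cQ$ is a nowhere vanishing global section, hence a vector $v\in\HH^0(\cQ)=V$ whose image in $\cQ|_{V_n}=V/V_n$ is nonzero at every $V_n\in\DD_n(n-1)$: this is equivalent to $v$ not lying in any isotropic $n$-subspace of family $a$, i.e.\ to $v$ being non-isotropic. Since rescaling $v$ produces the same line subbundle, $\Nest(\DD_n,n-1,n)$ is identified with the open complement of $\DD_n(1)$ in $\P(V)$, which via the polarity induced by the nondegenerate form on $V$ is canonically identified with the open subset of $\P(V^\vee)$ of hyperplanes cutting $\DD_n(1)$ smoothly. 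Transitivity of $\PSO(2n)$ on this open set follows from Witt's theorem together with scalar rescaling.
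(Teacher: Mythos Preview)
Your proposal has a genuine gap at precisely the crucial step: you explicitly say ``the main obstacle is to rule out $\ell\geq 1$'' and then only gesture at how you would do it (``I would combine Lemma~\ref{lem:restrictionQ}\dots\ with either a rigidity result\dots\ or a direct cohomological vanishing argument''). This is the heart of the proposition, and leaving it as a vague intention means the proof is incomplete.

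Your rigidity instinct can in fact be made to work, but you must name the result. Since $\DD_n(n-1)\cong\DD_n(n)$, for $\ell\geq 1$ your map $g$ is a surjective endomorphism of a rational homogeneous variety of Picard number one of degree $(\ell+1)^{\binom{n}{2}}>1$; by the theorem of Paranjape--Srinivas (for Hermitian symmetric spaces) or Hwang--Mok (in general) any such endomorphism is an automorphism, giving the contradiction. Without a precise citation, though, the argument is not there.

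The paper takes a different, entirely self-contained route to the same conclusion. It observes that $c_n(\cQ)=0$ (there is a nowhere-vanishing section $s_v$), so from $0\to\cO(-\ell H)\to\cQ\to\cF\to 0$ one gets $\ell\, c_{n-1}(\cF)H=0$, hence $c_{n-1}(\cF)=0$ when $\ell\neq 0$. Restricting to a linear $\Lambda\cong\P^{n-1}$ via Lemma~\ref{lem:restrictionQ} and writing $c_i(\cF_{|\Lambda})=p_iH_{|\Lambda}^i$, the identity of Chern polynomials yields a recursive system $p_1+x=2$, $p_{i+1}+p_ix=p_i+p_{i-1}x$, $p_{n-2}x=p_{n-2}+xp_{n-3}$ (with $x=-\ell$), which telescopes to $p_{n-2}x=2$; nefness and $x\leq 1$ then force $x=1$, $p_i=1$ for all $i$, contradicting $p_{n-2}=2$. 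This buys independence from deep rigidity theorems at the price of an explicit computation.

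Two smaller points. First, your stability argument for $\ell\geq 0$ is correct but you should justify that $\cQ$ is an irreducible homogeneous bundle on $\DD_n(n-1)$: the Levi of $P(D\setminus\{n-1\})$ has semisimple part of type $\DA_{n-1}$, and $\cQ$ corresponds to its standard $n$-dimensional representation. The paper bypasses stability altogether, using only that $\cQ^\vee(H)$ is nef to get $\ell\geq -1$. Second, your identification of $\Nest(\DD_n,n-1,n)$ with the open set in $\P(V^\vee)$ is only set-theoretic as written; to upgrade it to a scheme isomorphism one argues, as in the paper, that the natural map $U\to\Nest(\DD_n,n-1,n)$ is $\PSO(2n)$-equivariant and bijective, and that the action on $U$ is transitive.
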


\begin{proof}
First of all, we will show that there exist no surjective morphisms
$$
\cQ^\vee(H)\to \cO_{\DD_n(n-1)}(kH),
$$
unless $k= 1$. Let $\cF^\vee(H)$ denote the kernel of one such surjection, and set $x:=1-k$.  Since $\cQ^\vee(H)$ is nef, we get 
$x\leq 1$; let us assume $x\neq 0$ and show how to get to a contradiction by means of cohomological computations. 

Denote by $c_i(\cF)\in \HH^{2i}(\DD_n(n-1),\Z)$, $i=0,\dots,n-1$, the Chern classes of $\cF$. Since $\cF$ is a quotient of $\cQ$, it is globally generated, and so its Chern classes are non negative (\cite[Theorem~2.5]{DPS}). Moreover, given a non isotropic vector $v \in V$, the injectivity at every point  of the corresponding global section $s_v:\cO_{\DD_n(n-1)}\to \cQ$ implies the vanishing of the top Chern class of $\cQ$:
\begin{equation}c_n(\cQ)=0.\label{eq:cnQ}\end{equation}
Hence we may write $xc_{n-1}(\cF) 
H=0$, and the effectivity of $c_{n-1}(\cF)$ together with the assumption $x\neq 0$ provides $c_{n-1}(\cF)=0$.

We now consider the restriction of the bundles $\cQ$ and $\cF$ to a projective subspace $\Lambda\simeq \P^{n-1}$; By Lemma \ref{lem:restrictionQ} we obtain the following  exact sequence:
$$
\shse{\cO_\Lambda(x)}{\cQ_{|\Lambda}=T_\Lambda(-1)\oplus\cO_{\Lambda}(1)}{\cF_{|\Lambda}}
$$
Let us write $c_i(\cF_{|\Lambda})=p_iH_{|\Lambda}^i$, where $p_0=1$, and $p_i\in \Z_{\geq 0}$ for $i=0,\dots n-1$. The Chern polynomial of  $T_\Lambda(-1)\oplus\cO_{\Lambda}(1)$ can be computed by means of the Euler sequence, and then the above exact sequence implies that:
$$
\dfrac{1-t^n}{1-t}(1+t)=(1+xt)\sum_{i=0}^{n-1}p_it^i \quad\mbox{mod }t^n,
$$
that is 
$$
(1+t)=(1+xt)(1-t)\sum_{i=0}^{n-1}p_it^i\quad\mbox{mod }t^n.
$$
From $c_{n-1}(\cF)=0$ we get $p_{n-1}=0$, and so this equation can be translated into the following equality of polynomials with integral coefficients:
$$
(1+t)=(1+(x-1)t-xt^2)\sum_{i=0}^{n-2}p_it^i+p_{n-2}xt^{n}.
$$
From this a straightforward computation provides:
$$
\left\{\begin{array}{l}
p_1+x=2\\
p_{i+1}+p_ix=p_i+p_{i-1}x,\qquad i=1,\dots,n-3\\
p_{n-2}x=p_{n-2}+xp_{n-3}
\end{array}
\right.
$$
that is:
$$
p_{n-2}x=p_{n-2}+xp_{n-3}=\ldots=p_2+p_1x=p_1+x=2.
$$
Since $p_{n-2}\geq 0$ and $x\leq 1$, $p_{n-2}x=2$ leaves us with only one possibility: $(p_{n-2},x)=(2,1)$.
But reading the above equations from right to left, $x=1$ implies $p_1=p_2=\dots=p_{n-2}=1$, a contradiction. 

We may now conclude the proof as in the case $\DA_n$ and $\DB_3$. 
Denoting by $U\subset\P(V^\vee)$ the open set parametrizing smooth hyperplane sections $H_v$ of $\DD_n(1)$, which correspond to non isotropic one dimensional subspaces $\langle v \rangle \subset V$, there exists a surjective morphism $\psi:U\to\Nest(\DD_n,n-1,n)$. On the other hand, by construction, $\psi$ is injective, and equivariant with respect to the action of $\PSO(2n)$. Hence, in order to show that it is an isomorphism it is enough to note that $\PSO(2n)$ acts transitively on $U$, which is equal to the complement in $\P(V^\vee)$ of the quadric dual to $\DD_n(1)$. 
\end{proof}


\subsection{Nestings of rational homogeneous bundles over $\P^1$}

In the three examples above we have seen that a nesting of type $(\cD,i,j)=(\DA_{n},1,n)$, $(\DB_3,1,3)$ or $(\DD_n,1,n)$ corresponds to the choice of an action on $\cD(i)$ of a subgroup $G_\sigma\subset G$ of type $\DC_{(n+1)/2}$, $\DG_2$ or $\DB_{n-1}$, respectively. As usual we denote by $G$ the adjoint group of the Dynkin diagram $\cD$.

Let us consider here the case of a complex manifold $X$ and a principal $G$-bundle $\cE\to X$, defined by a cocycle $\theta\in \HH^1(X,G)$, with $G$ the adjoint group of type $\cD=\DA_{n}$, $\DB_3$, or $\DD_n$. Given two indices $(i,j)$ so that $(\cD,i,j)$ is one of the triples described above, we may construct the bundles $\cE\times^G\cD(i)$, and $\cE\times^G\cD(i,j)$, and ask ourselves whether the natural map $\pi:\cE\times^G\cD(i,j)\to\cE\times^G\cD(i)$ admits a section, that is, in the language of Section \ref{ssec:famnest}, if we have a nesting of type $(\cE,i,j)$. 

The important point to note here is that in each one of the considered cases, the adjoint group $G$ acts transitively on $\Nest(\cD,i,j)$. Thus, by Corollary \ref{cor:red}, the existence of a nesting of type $(\cE,i,j)$ is equivalent to saying that $\cE$ reduces to a principal $G_\sigma$-bundle. 

Later on we will apply this to the case in which $X=\P^1$, in which we may use the characterization of principal bundles presented in Section \ref{ssec:groth}. As in Section \ref{ssec:folds} up to the choice of an appropriate conjugation we may assume that $H_\sigma:=H\cap G_\sigma$ is a maximal torus in $G_\sigma$, and that the induced map $\Mo(H)\to \Mo(H_\sigma)$ is given by the corresponding folding map. This allows us to write explicitly the inclusion of lattices $\Na(H_\sigma)\subset \Na(H)$,  establishing conditions on the tag $(d_1,\dots,d_n)$ of the bundle for the existence of a reduction to $G_\sigma$. 
   
A case by case straightforward analysis of the inclusion $\Na(H_\sigma)\subset \Na(H)$ provides the following:

\begin{proposition}\label{prop:nestP1}
Let $G$ be a semisimple group with Dynkin diagram $\cD$, and $i,j$ be two nodes of $\cD$, so that $(\cD,i,j)$ is, either $(\DA_n,1,n)$ with $n\geq 3$ odd, or $(\DB_n,1,n)$ with $n=3$, or $(\DD_n,n-1,n)$. Let $\cE$ be a $G$-principal bundle over $\P^1$, with tag $(d_1,\dots,d_n)$. There exists a nesting of type $(\cE,i,j)$ if and only if: 
\begin{itemize}
\item $d_i=d_{n-i}$, for every $i\in\{1,\dots,n\}$, in the case $\DA_n$,
\item $d_1=d_3$ in the case $\DB_3$, and
\item $d_{n-1}=d_n$ in the case $\DD_n$.
\end{itemize}
\end{proposition}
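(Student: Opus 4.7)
The plan is to combine Corollary \ref{cor:red} with the description of principal bundles on $\P^1$ recalled in Section \ref{ssec:groth}. In each of the three cases $(\DA_n,1,n)$, $(\DB_3,1,3)$, $(\DD_n,n-1,n)$, Propositions \ref{prop:nestAn}, \ref{prop:nestB3}, and \ref{prop:nestDn} show that the adjoint group $G$ acts transitively on the nonempty variety $\Nest(\cD,i,j)$. Therefore, Corollary \ref{cor:red} applies, and the existence of a nesting of type $(\cE,i,j)$ is equivalent to asking that the cocycle $\theta\in \HH^1(\P^1,G)$ defining $\cE$ lie in the image of $\HH^1(\P^1,G_\sigma)\to\HH^1(\P^1,G)$, where $G_\sigma$ is the isotropy in $G$ of a fixed $\sigma\in\Nest(\cD,i,j)$. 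By the constructions carried out in Sections \ref{ssec:folds} and \ref{sec:special}, $G_\sigma$ is, respectively, of type $\DC_{(n+1)/2}$, $\DG_2$, or $\DB_{n-1}$, arising from the foldings $\DA_n\to\DC_{(n+1)/2}$, $\DB_3\to\DG_2$, and $\DD_n\to\DB_{n-1}$.

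Next, I would choose a maximal torus $H\subset G$ such that $H_\sigma:=H\cap G_\sigma$ is a maximal torus of $G_\sigma$, and Borel subgroups $B\supset H$, $B_\sigma:=B\cap G_\sigma$, compatible so that the induced map on characters coincides with the folding $p:\Mo(H)\to \Mo(H_\sigma)$ of Section \ref{ssec:folds}. By Grothendieck's theorem (see Section \ref{ssec:groth}), both $\HH^1(\P^1,G)$ and $\HH^1(\P^1,G_\sigma)$ are parametrized by co-characters of $H$ and $H_\sigma$, and the map $\HH^1(\P^1,G_\sigma)\to \HH^1(\P^1,G)$ is induced by the dual inclusion $\Na(H_\sigma)\hookrightarrow \Na(H)$. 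Hence the existence of a nesting of type $(\cE,i,j)$ is equivalent to asking that the co-character $\theta\in \Na(H)$ associated to $\cE$ lie in the sub-lattice $\Na(H_\sigma)$. Since $d_i=-\theta(\alpha_i)$ by Proposition \ref{prop:tag}, this translates into equalities among the $d_i$'s prescribed by the folding: whenever two simple roots $\alpha_i,\alpha_j$ of $\cD$ are identified under $p$, the corresponding tag entries must agree.

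The remaining task is a short case-by-case reading of the three foldings displayed in Section \ref{ssec:folds}. For $\DA_n\to\DC_{(n+1)/2}$ the symmetry identifies the $i$-th simple root of $\DA_n$ with its mirror image, producing the symmetry of the tag stated in the proposition. For $\DD_n\to\DB_{n-1}$ only the nodes $n-1$ and $n$ are identified, giving $d_{n-1}=d_n$. For $\DB_3\to\DG_2$, diagram (\ref{sub:gpb3}) shows that nodes $1$ and $3$ fold together, yielding $d_1=d_3$. The point requiring most care is verifying that $\Na(H_\sigma)$ is cut out inside $\Na(H)$ \emph{precisely} by these symmetry equalities, without further congruence or saturation restrictions. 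For the foldings of $\DA_n$ and $\DD_n$, which are induced by outer involutions of $G$, $\Na(H_\sigma)$ is exactly the fixed sub-lattice of the involution, and the claim is immediate. The more delicate case is $\DB_3\to\DG_2$, where the embedding is not induced by an outer automorphism of $G$; here one can use the explicit description of $\DG_2\subset\PSO(7)$ given in Section \ref{ssec:exB3} (via the action of $\DG_2$ on the complexified octonions) to verify directly that the sub-lattice in $\Na(H)$ is saturated and cut out by the single equation $d_1=d_3$.
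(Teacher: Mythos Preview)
Your proposal is correct and follows essentially the same route as the paper: the paragraphs immediately preceding Proposition \ref{prop:nestP1} set up precisely the reduction you describe (Corollary \ref{cor:red} plus Grothendieck's theorem to rewrite the reduction-to-$G_\sigma$ condition as a lattice condition $\Na(H_\sigma)\subset\Na(H)$ coming from the folding map), and the paper then simply states that ``a case by case straightforward analysis'' yields the listed symmetry conditions. If anything, you are slightly more explicit than the paper, in particular in flagging and addressing the saturation issue for the $\DB_3\to\DG_2$ case.
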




\section{Reductions}\label{sec:reduc}

The main goal of this section is to reduce the proof of Theorem \ref{thm:main} to the case in which $(I,J)=(\{i\},\{j\})$, and $i$ is an extremal node of $\cD$. We will divide the reduction in several steps (Propositions \ref{prop:red1}, \ref{prop:red2}, and \ref{prop:red3}), showing first how to reduce us to the case in which $J=\{j\}$. Let us start by discarding two possible nestings for the diagram $\DD_4$, that will appear in the subsequent steps of the reduction.

\begin{lemma}\label{lem:triality}
There exist no nestings of types $(\DD_4,3,\{1,4\})$, $(\DD_4,\{3,4\},1)$.
\end{lemma}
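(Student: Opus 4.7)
The plan is to first establish the non-existence of $(\DD_4, 3, \{1,4\})$ nestings by a compatibility argument using Proposition \ref{prop:nestDn}, and then to deduce the non-existence of $(\DD_4, \{3,4\}, 1)$ nestings from this via a short composition argument.

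For the first case, I will suppose that $\sigma: \DD_4(3) \to \DD_4(1,3,4)$ is a section of the natural projection and consider the two induced sections $\sigma_{34} := \pi_{\{1,3,4\},\{3,4\}}\circ\sigma$ and $\sigma_{13} := \pi_{\{1,3,4\},\{1,3\}}\circ\sigma$. The former is a nesting of type $(\DD_4, 3, 4)$, so by Proposition \ref{prop:nestDn} it is of the form $\sigma_v$ for some non-isotropic $v\in V$, satisfying $\sigma_{34}(\Lambda_3)=(\Lambda_3,\Lambda_4)$ with $\Lambda_3\cap\Lambda_4=\Lambda_3\cap v^\perp$. Composing $\sigma_{13}$ with the projection $\DD_4(1,3)\to\DD_4(1)$ yields a morphism $\phi:\DD_4(3)\to\DD_4(1)$ sending $\Lambda_3$ to the point $P(\Lambda_3)\in\Lambda_3$ that $\sigma$ selects. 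The compatibility imposed by $\sigma$ is that $P(\Lambda_3)$ must lie in $\Lambda_3\cap\Lambda_4=\Lambda_3\cap v^\perp$, so the image of $\phi$ is contained in the smooth hyperplane section $v^\perp\cap\DD_4(1)\simeq Q^5$, properly contained in $\DD_4(1)=Q^6$.

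The crux of the argument, and the step I expect to require the most care, is to show that $\phi$ is surjective, which contradicts the above containment. Since both $\DD_4(3)$ and $\DD_4(1)$ have Picard rank one, I can write $\phi^*H_1=aH_3$ for some integer $a\geq 0$, where $H_i$ denotes the ample generator of $\Pic(\DD_4(i))$. I would rule out $a=0$ by observing that $\phi$ constant would force a fixed point $P_0$ to satisfy $P_0\subset\Lambda_3$ for every $\Lambda_3\in\DD_4(3)$; but the set of $\Lambda_3$'s through a given point is a $\P^3$-fiber of $\DD_4(1,3)\to\DD_4(1)$, hence a proper three-dimensional subvariety of the six-dimensional $\DD_4(3)$. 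Once $a\geq 1$ is established, $\phi^*H_1$ is ample, so $\phi$ is finite; being a finite morphism between irreducible projective varieties of equal dimension, it is surjective.

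For the second case, I would observe that any hypothetical section $\sigma_2$ of the projection $\DD_4(1,3,4)\to\DD_4(3,4)$ may be composed with any nesting $\tau:\DD_4(3)\to\DD_4(3,4)$ of type $(\DD_4,3,4)$---such $\tau$ existing by Example \ref{ex:nestD} and Lemma \ref{lem:GstQ}. The composition $\sigma_2\circ\tau$ then satisfies $\pi_{\{1,3,4\},\{3\}}\circ\sigma_2\circ\tau = \pi_{\{3,4\},\{3\}}\circ\pi_{\{1,3,4\},\{3,4\}}\circ\sigma_2\circ\tau = \pi_{\{3,4\},\{3\}}\circ\tau = \mathrm{id}_{\DD_4(3)}$, so it is a nesting of type $(\DD_4,3,\{1,4\})$. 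Since we will have ruled such nestings out in the first part, this furnishes the required contradiction.
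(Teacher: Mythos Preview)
Your proposal is correct and follows essentially the same approach as the paper. Both arguments use Proposition~\ref{prop:nestDn} to identify the induced nesting $\DD_4(3)\to\DD_4(3,4)$ with a $\sigma_v$, observe that the resulting map $\phi:\DD_4(3)\to\DD_4(1)$ has image in the hyperplane section $Q^5$, and reach a contradiction; the reduction of the second case to the first via composition with a known nesting $\DD_4(3)\to\DD_4(3,4)$ is also identical. The only cosmetic difference is in how the contradiction is phrased: the paper notes that a morphism $Q^6\to Q^5$ must be constant and then argues that constancy is impossible, whereas you first exclude constancy and then deduce surjectivity onto $Q^6$, contradicting the containment in $Q^5$---these are the same Picard-number-one argument run in opposite directions.
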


\begin{proof}
Note that the second part reduces to the first, since if we had a section $\DD_4(3,4)\to \DD_4(1,3,4)$, composing it with any section $\DD_4(3)\to \DD_4(3,4)$ given by Proposition \ref{prop:nestDn}, we would also have a section $\DD_4(3)\to \DD_4(1,3,4)$.

As usual, we consider $\DD_4(3)$  and $\DD_4(4)$ as the two parameter spaces of $\P^3$'s --say of type $a$ and $b$, respectively-- on the $6$-dimensional quadric $\DD_4(1)$. 

Assume that there exists a section $\sigma:\DD_4(3)\to\DD_4(1,3,4)$, and denote by $\sigma':\DD_4(3)\to\DD_4(3,4)$ its composition with the natural projection to $\DD_4(3,4)$. By Proposition \ref{prop:nestDn}, $\sigma'$ is determined by the choice of a $5$-dimensional smooth quadric $Q^5\subset\DD_4(1)$: given $\P^3_a\in \DD_4(3)$, its image $\sigma'(\P^3_a)$ consists of the pair $(\P^3_a,\P^3_b)$, where $\P^3_b$ is the only $\P^3_b\in\DD_4(4)$ such that $\P^3_a\cap \P^3_b=\P^3_a\cap Q^5$. 

Hence the section $\sigma$ maps $\P^3_a$ to a triple $(P,\P^3_a,\P^3_b)$, where $(\P^3_a,\P^3_b)=\sigma'(\P^3_a)$ and $P\in\P^3_a\cap\P^3_b$. Since $\P^3_a\cap\P^3_b\subset Q^5$, we may claim that the composition of $\sigma$ with the natural projection to the $6$-dimensional quadric $\DD_4(1)$ sends $\DD_4(3)$ into the quadric $Q^5$. But $\DD_4(3)$ is isomorphic to a $6$-dimensional smooth quadric, therefore this map is necessarily constant. This means that there is a point $P\in\DD_4(1)$ contained in all the $\P^3_a$'s of the family $\DD_4(3)$, a contradiction. 
\end{proof}

\begin{lemma}\label{lem:red1}
Let $G$ be a semisimple algebraic group, with associated Dynkin diagram $\cD$, and let $I, J$ be two disjoint nonempty sets of nodes of $\cD$ such that $(\cD,I,J)$ admits a nesting. Then, for every $j\in J$, $(\cD,I,j)$ admits a nesting. 
\end{lemma}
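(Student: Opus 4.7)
The plan is to construct the desired nesting of type $(\cD,I,j)$ simply as the composition of the given nesting $\sigma:\cD(I)\to\cD(I\cup J)$ with a natural projection. Concretely, since $I\cup\{j\}\subset I\cup J$, the inclusion of node sets provides a projection
$$\pi_{I\cup J,\,I\cup\{j\}}:\cD(I\cup J)\lra \cD(I\cup\{j\})$$
obtained by unmarking the nodes of $J\setminus\{j\}$. I would then define
$$\sigma_j := \pi_{I\cup J,\,I\cup\{j\}}\circ \sigma : \cD(I)\lra \cD(I\cup\{j\}).$$

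The key observation is that unmarking in stages is the same as unmarking all at once: since $I\subset I\cup\{j\}\subset I\cup J$, one has the equality of morphisms
$$\pi_{I\cup\{j\},\,I}\circ \pi_{I\cup J,\,I\cup\{j\}}=\pi_{I\cup J,\,I}.$$
Using that $\sigma$ is a section of $\pi_{I\cup J,I}$, it follows immediately that
$$\pi_{I\cup\{j\},\,I}\circ \sigma_j = \pi_{I\cup\{j\},\,I}\circ \pi_{I\cup J,\,I\cup\{j\}}\circ \sigma = \pi_{I\cup J,\,I}\circ \sigma =\id_{\cD(I)},$$
so $\sigma_j$ is a nesting of type $(\cD,I,j)$, as required.

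There is essentially no obstacle: the statement is a formal consequence of the functoriality of the unmarking contractions under inclusions of subsets of nodes, which is built into the construction of the projections $\pi_{K,K'}$ in Section \ref{ssec:prelim1}. In particular, this step does not use any of the auxiliary material on foldings or on Lemma \ref{lem:triality}; those ingredients are needed for the subsequent, more delicate reductions (Propositions \ref{prop:red2}, \ref{prop:red3}), where one has to reduce to the case $I=\{i\}$ and to the case in which $i$ is extremal in $\cD$.
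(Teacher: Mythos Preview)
Your argument is correct and is exactly the paper's own proof: compose the given section $\sigma:\cD(I)\to\cD(I\cup J)$ with the projection $\pi_{I\cup J,\,I\cup\{j\}}$ and use the compatibility $\pi_{I\cup\{j\},\,I}\circ\pi_{I\cup J,\,I\cup\{j\}}=\pi_{I\cup J,\,I}$ to conclude.
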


\begin{proof}
It is enough to note that the composition of a section $\cD(I)\to\cD(I\cup J)$ with the natural projection $\cD(I\cup J)\to\cD(I\cup\{j\})$ is a section of the natural projection $\cD(I\cup\{j\})\to\cD(I)$.
\end{proof}

\begin{proposition}\label{prop:red1}
It is enough to prove Theorem \ref{thm:main} in the case in which $J$ consists of only one element.
\end{proposition}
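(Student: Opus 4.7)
My plan is to assume Theorem~\ref{thm:main} has been proved when $|J|=1$ and to deduce it for arbitrary $|J|$ by contradiction. Given $(\cD,I,J)$ admitting a nesting, I will first apply Lemma~\ref{lem:red1}: for every $j\in J$ the triple $(\cD,I,\{j\})$ admits a nesting, so by the singleton case of the theorem it is isomorphic to one of $(\DA_{2n-1},1,2n-1)$, $(\DB_3,1,3)$, or $(\DD_n,n-1,n)$. Since each of these triples satisfies $|I|=1$, I immediately get $I=\{i\}$ for a single node $i$, and $\cD$ must be of one of the three families above.

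The task then reduces to showing $|J|=1$. Assuming two distinct $j_0,j_1\in J$ existed, $(\cD,i,j_0)$ and $(\cD,i,j_1)$ would be isomorphic to the \emph{same} listed triple (their diagrams already agree), producing an automorphism $\phi\in\Aut(\cD)$ with $\phi(i)=i$ and $\phi(j_0)=j_1$. I would then examine the stabilizer of $i$ in $\Aut(\cD)$ case by case. For $\cD=\DA_{2n-1}$, the only admissible $i$ lies in $\{1,2n-1\}$, and each such node has trivial stabilizer under the order-two involution $k\mapsto 2n-k$; for $\cD=\DB_3$ the automorphism group is itself trivial; and for $\cD=\DD_n$ with $n\geq 5$, the admissible $i\in\{n-1,n\}$ again has trivial stabilizer in the order-two automorphism group swapping the fork nodes. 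In all three situations $j$ is forced to be uniquely determined by $i$, contradicting $|J|\geq 2$.

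The hard part will be the residual case $\cD=\DD_4$, where triality gives $\Aut(\DD_4)=S_3$ acting on the three outer nodes $\{1,3,4\}$: the stabilizer of $i\in\{1,3,4\}$ contains the transposition of the other two nodes, so two distinct values of $j$ are genuinely possible. This compels $|J|=2$ with $I\cup J=\{1,3,4\}$, a configuration not detectable by automorphism counting alone. To dispose of it, I will invoke Lemma~\ref{lem:triality}: every such triple is, up to triality, isomorphic to $(\DD_4,3,\{1,4\})$, and the lemma asserts precisely that this triple admits no nesting. This final contradiction completes the reduction to $|J|=1$.
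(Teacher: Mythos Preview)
Your proposal is correct and follows essentially the same route as the paper: apply Lemma~\ref{lem:red1} to reduce to the singleton case for each $j\in J$, use the classification to force $|I|=1$ and pin down the admissible $j$'s via the automorphism group of $\cD$, and then eliminate the residual $\DD_4$ configuration with Lemma~\ref{lem:triality}. The paper compresses your automorphism-stabilizer case analysis into the phrase ``by inspection of the possible cases,'' but the logic is identical.
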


\begin{proof}
Assume that $J$ contains at least two elements, $j_1,j_2$; the previous lemma tells us that both $(\cD,I,j_1)$, $(\cD,I,j_2)$ admit a nesting. If  Theorem \ref{thm:main} holds in the case in which $J=\{j\}$, then, by inspection of the possible cases, $I$ must consist of only one extremal node of $\cD$, and both $j_1$ and $j_2$ are extremal; in particular, $\cD$ is of type $\DD_n$. But since the two triples $(\cD,i,j_1)$, $(\cD,i,j_2)$ must be of type $(\DD_n,n-1,n)$, the only possibility is that $\cD=\DD_4$, and that $\{i,j_1,j_2\}=\{1,3,4\}$.  
Up to an automorphism of $\DD_4$, we may assume $(i,j_1,j_2)=(3,1,4)$, and conclude by Lemma \ref{lem:triality}.
\end{proof}

Next step will be to reduce us to the case in which also $I$ consists of a unique element. We start with the following observation: 

\begin{remark}\label{rem:red2}
Let $G$ be a semisimple algebraic group, whose associated Dynkin diagram $\cD$ is connected. Given $I\subset D$, $j\in D$, let $i_1\in I$ be an element such that there exists a connected subdiagram of $\cD$ containing $i_1$ and $j$, and disjoint of $I\setminus \{i_1\}$; in other words, we want $i_1\in I$ to be a node neighboring the connected component containing $j$ of the Dynkin subdiagram of $\cD$ supported on the nodes $D\setminus I$. The fibers of the contractions:
$$
\pi_1:=\pi_{I,I\setminus\{i_1\}}:\cD(I)\to \cD(I\setminus\{i_1\}),\quad\pi_2:=\pi_{I\cup\{j\},I\setminus\{i_1\}}:\cD(I\cup\{j\})\to \cD(I\setminus\{i_1\}),
$$
 are rational homogeneous spaces of the form $\cD'(i_1)$ and $\cD(i_1,j)$, respectively, where $\cD'$ denotes the connected component containing $j$ of the Dynkin subdiagram of $\cD$ supported on the nodes of $D\setminus (I\setminus \{i_1\})$. If $(\cD,I,j)$ admits a nesting, then restricting it to the fibers of the above contractions we get nestings of type $(\cD',i_1,j)$.
\end{remark}

\begin{proposition}\label{prop:red2}
It is enough to prove Theorem \ref{thm:main} in the case in which $I$ and $J$ consist of only one element.
\end{proposition}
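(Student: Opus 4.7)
The plan is to induct on $|I|$ by using Remark \ref{rem:red2} as the basic reduction, combined with Proposition \ref{prop:nestP1} to rule out the remaining configurations. Assume $(\cD, I, j)$ admits a nesting with $|I| \geq 2$ (we may take $J=\{j\}$ by Proposition \ref{prop:red1}). Since $\cD$ is connected, we may fix $i_1 \in I$ adjacent in $\cD$ to the connected component of $\cD_{D \setminus I}$ containing $j$. By Remark \ref{rem:red2}, the nesting restricts on each fiber of $\pi_1=\pi_{I,I \setminus \{i_1\}}$ to a nesting of type $(\cD', i_1, j)$, where $\cD'$ is the connected component of $\cD_{D \setminus (I \setminus \{i_1\})}$ containing $j$. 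Assuming Theorem \ref{thm:main} in the single-element case, $(\cD', i_1, j)$ must be one of $(\DA_{2m-1}, 1, 2m-1)$, $(\DB_3, 1, 3)$, or $(\DD_m, m-1, m)$; in particular, both $i_1$ and $j$ are leaves of $\cD'$, all intermediate nodes of $\cD'$ lie in $D \setminus I$, and $\cD'$ is a specific subdiagram of $\cD$.

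To eliminate $|I|\ge 2$, I would extract a rational homogeneous bundle over $\P^1$ and invoke Proposition \ref{prop:nestP1}. Pick $i_2\in I \setminus\{i_1\}$, and choose a minimal rational curve $C\simeq \P^1 \subset \cD(I \setminus \{i_1\})$ realized as a fiber of the elementary contraction associated with $i_2$ (or, if such a fiber is not a $\P^1$, an appropriate line in it). Pulling back the contractions $\pi_1,\pi_2$ to $C$ yields a principal $G'$-bundle on $\P^1$ (with $G'$ the adjoint group of $\cD'$) whose associated $\cD'(i_1)$- and $\cD'(i_1,j)$-bundles admit a nesting. Proposition \ref{prop:nestP1} then imposes a symmetry condition on the tag $(d_1,\dots,d_{\rk G'})$ of this bundle, namely $d_{i_1}=d_j$ in the $\DA_{2m-1}$ case, $d_1=d_3$ in the $\DB_3$ case, and $d_{m-1}=d_m$ in the $\DD_m$ case. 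The entries of the tag can be computed by Proposition \ref{prop:tag} as intersection numbers of $C$ with the relative canonical divisors, and hence are read off from the adjacencies between $i_2$ and the nodes of $\cD'$ in the full diagram $\cD$.

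The main obstacle is then a case-by-case verification that, for every admissible $\cD$, $I$, $i_1$, $j$ coming from the three families above, there exists at least one choice of $i_2\in I\setminus\{i_1\}$ and of rational curve $C$ whose associated tag fails the required symmetry: this is what forces a contradiction and hence $|I|=1$. The analysis is uniform in the infinite families; the only genuinely exceptional point is the $\DD_4$ triality, where the symmetry between nodes $1,3,4$ could a priori allow nestings of type $(\DD_4,3,\{1,4\})$ or $(\DD_4,\{3,4\},1)$ to slip through, and this is precisely the content of Lemma \ref{lem:triality}, which is invoked to close the argument. Combining these steps with Proposition \ref{prop:red1} completes the reduction.
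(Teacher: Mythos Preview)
Your outline follows the paper's strategy: Remark~\ref{rem:red2} yields fiberwise nestings of type $(\cD',i_1,j)$, the single-element theorem pins $(\cD',i_1,j)$ down to one of the three families, restriction to a $\Gamma_{i_2}$-curve plus Proposition~\ref{prop:nestP1} imposes a symmetry constraint on the tag, and Lemma~\ref{lem:triality} handles $\DD_4$.

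The gap is in the case analysis you defer. You claim that for each admissible $i_1$ some $i_2\in I\setminus\{i_1\}$ produces a tag violating the symmetry, with only $\DD_4$ exceptional. This fails for general $\DD_n$. For example, take $\cD=\DD_n$, $i_1=n$, $j=n-1$, $I\setminus\{i_1\}=\{n-r\}$ with $r\ge 3$: then $\cD'$ is the $\DD_r$-tail on $\{n-r+1,\dots,n\}$, the only available $i_2=n-r$ meets $\cD'$ at its node~$1$, the tag is nonzero only there, and hence $d_{r-1}=d_r=0$, so Proposition~\ref{prop:nestP1} gives nothing. Similarly, when $\cD'\cong\DA_3$ on $\{n-2,n-1,n\}$ and $i_2$ enters at the central node $n-2$, the tag is symmetric. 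The paper closes these surviving cases by a move you omit: it \emph{varies $i_1$ as well}, reapplying Remark~\ref{rem:red2} with the roles of $i_1$ and $i_2$ interchanged (and in one branch bringing in a third node $i_3\in I$). The new $\cD'$ is then typically of a type not on the admissible list, giving a direct contradiction with the single-element case; iterating, every surviving configuration is reduced to one of type $(\DD_4,\{i_1,i_2\},j)$, and only at that point does Lemma~\ref{lem:triality} apply. So the passage to $\DD_4$ is not a single tag check but requires repeated use of Remark~\ref{rem:red2} with different choices of $i_1$.
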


\begin{proof}
By Corollary \ref{prop:red1}, in order to prove Theorem \ref{thm:main} we may assume, without loss of generality, that $J=\{j\}$. 
Let us then assume that we have a nesting of type $(\cD,I,j)$, that is a section $\sigma:\cD(I)\to\cD(I\cup j)$. As in the previous Remark, we choose an element $i_1\in I$ such that there exists a connected subdiagram of $\cD$ containing $i_1$ and $j$, and think of $\cD(I)$, $\cD(I\cup \{j\})$ as bundles over $X:=\cD(I\setminus \{i_1\})$, with fibers $\cD'(i_1)$, $\cD'(i_1,j)$; the existence of a nesting of type $(\cD,I,J)$ implies that we have a nesting of $(\cD',i_1,j)$, over every point of $X$.

Assuming that the main theorem holds in the case in which the two defining sets of nodes consist of one element,  we get that $(\cD',i_1,j)$ is of type $(\DA_{2n-1},1,2n-1)$, $(\DB_3,1,3)$, or $(\DD_n,n-1,n)$. 

If $I$ contains at least two elements, then $\cD'$ is properly contained in $\cD$, and we may find a node $i_2\in I\setminus\{i_1\}$ neighboring $\cD'$.  

We consider now the inverse images in $\cD(I)$ and $\cD(I\cup\{j\})$ of a rational curve $C\subset X$ of type $\Gamma_{i_2}$, which are two bundles over $C$ determined by the same principal bundle, namely the one determined by the Dynkin diagram $\cD'$ tagged with the integers $d_r:=\Gamma_{i_2}\cdot K_r$ at the $r$-th node of $\cD'$ (by Proposition \ref{prop:tag}). The existence of a section $\sigma_{|C}:\cD(I)_{|C}\to\cD(I\cup\{j\})_{|C}$ tells us, by Proposition \ref{prop:nestP1}, that the integers $d_r$ satisfy a certain symmetry condition. On the other hand, we know that for the $r$-th node of $\cD'$, $d_r$ is equal to $0$ if $i_2$ is not linked to $r$, hence 
the tag contains a unique element different from zero, and 
the conditions of Proposition \ref{prop:nestP1} may be fulfilled only if, up to automorphism of $\cD$, we have:
$$\cD=\DD_n,\quad\mbox{and} \quad(i_1,j,i_2)=
\left\{\begin{array}{l}
(n-3,n-1,n)\,\,\,(\mbox{with }n=4,\mbox{ or }i_3:=n-4\in I),\\
(n,n-1,n-r)\,\,\,(\mbox{with }r\geq 3), \\
(n-1,n-3,n)\,\,\,(\mbox{with }n=4,\mbox{ or }i_3:=n-4\in I).
\end{array}\right.$$ 
We represent here the three possible cases:
\begin{figure}[h!]
\ifx\du\undefined
  \newlength{\du}
\fi
\setlength{\du}{2\unitlength}
\begin{tikzpicture}
\pgftransformxscale{1.000000}
\pgftransformyscale{1.000000}
\definecolor{dialinecolor}{rgb}{0.000000, 0.000000, 0.000000} 
\pgfsetstrokecolor{dialinecolor}
\definecolor{dialinecolor}{rgb}{0.000000, 0.000000, 0.000000} 
\pgfsetfillcolor{dialinecolor}
\pgfsetlinewidth{0.300000\du}
\pgfsetdash{}{0pt}
\pgfsetdash{}{0pt}
\pgfpathellipse{\pgfpoint{-16\du}{0\du}}{\pgfpoint{1\du}{0\du}}{\pgfpoint{0\du}{1\du}}
\pgfusepath{stroke}
\node at (-16\du,0\du){};
\pgfpathellipse{\pgfpoint{-6\du}{0\du}}{\pgfpoint{1\du}{0\du}}{\pgfpoint{0\du}{1\du}}
\pgfusepath{stroke}
\node at (-6\du,0\du){};
\pgfpathellipse{\pgfpoint{4\du}{0\du}}{\pgfpoint{1\du}{0\du}}{\pgfpoint{0\du}{1\du}}
\pgfusepath{stroke}
\node at (4\du,0\du){};
\pgfpathellipse{\pgfpoint{14\du}{0\du}}{\pgfpoint{1\du}{0\du}}{\pgfpoint{0\du}{1\du}}
\pgfusepath{stroke}
\node at (14\du,0\du){};
\pgfpathellipse{\pgfpoint{24\du}{0\du}}{\pgfpoint{1\du}{0\du}}{\pgfpoint{0\du}{1\du}}
\pgfusepath{stroke}
\node at (24\du,0\du){};
\pgfpathellipse{\pgfpoint{34\du}{5\du}}{\pgfpoint{1\du}{0\du}}{\pgfpoint{0\du}{1\du}}
\pgfusepath{stroke}
\node at (34\du,5\du){};
\pgfpathellipse{\pgfpoint{34\du}{-5\du}}{\pgfpoint{1\du}{0\du}}{\pgfpoint{0\du}{1\du}}
\pgfusepath{stroke}
\node at (34\du,-5\du){};
\pgfsetlinewidth{0.300000\du}
\pgfsetdash{}{0pt}
\pgfsetdash{}{0pt}
\pgfsetbuttcap
{\draw (-15\du,0\du)--(-7\du,0\du);}
{\draw (5\du,0\du)--(13\du,0\du);}
{\draw (15\du,0\du)--(23\du,0\du);}
{\draw (24.65\du,0.6\du)--(33.1\du,4.9\du);}
{\draw (24.65\du,-0.6\du)--(33.1\du,-4.9\du);}
\pgfsetlinewidth{0.400000\du}
\pgfsetdash{{1.000000\du}{1.000000\du}}{0\du}
\pgfsetdash{{1.000000\du}{1.000000\du}}{0\du}
\pgfsetbuttcap
{\draw (-4.5\du,-1\du)--(2.8\du,-1\du);}
\node[anchor=south] at (4\du,1.1\du){$\scriptstyle i_3$};
\node[anchor=south] at (14\du,1.1\du){$\scriptstyle i_1$};
\node[anchor=south] at (34\du,6.1\du){$\scriptstyle j$};
\node[anchor=south] at (34\du,-4.1\du){$\scriptstyle i_2$};
\end{tikzpicture} 
\,
\ifx\du\undefined
  \newlength{\du}
\fi
\setlength{\du}{2\unitlength}
\begin{tikzpicture}
\pgftransformxscale{1.000000}
\pgftransformyscale{1.000000}
\definecolor{dialinecolor}{rgb}{0.000000, 0.000000, 0.000000} 
\pgfsetstrokecolor{dialinecolor}
\definecolor{dialinecolor}{rgb}{0.000000, 0.000000, 0.000000} 
\pgfsetfillcolor{dialinecolor}
\pgfsetlinewidth{0.300000\du}
\pgfsetdash{}{0pt}
\pgfsetdash{}{0pt}
\pgfpathellipse{\pgfpoint{-16\du}{0\du}}{\pgfpoint{1\du}{0\du}}{\pgfpoint{0\du}{1\du}}
\pgfusepath{stroke}
\node at (-16\du,0\du){};
\pgfpathellipse{\pgfpoint{-6\du}{0\du}}{\pgfpoint{1\du}{0\du}}{\pgfpoint{0\du}{1\du}}
\pgfusepath{stroke}
\node at (-6\du,0\du){};
\pgfpathellipse{\pgfpoint{4\du}{0\du}}{\pgfpoint{1\du}{0\du}}{\pgfpoint{0\du}{1\du}}
\pgfusepath{stroke}
\node at (4\du,0\du){};
\pgfpathellipse{\pgfpoint{14\du}{0\du}}{\pgfpoint{1\du}{0\du}}{\pgfpoint{0\du}{1\du}}
\pgfusepath{stroke}
\node at (14\du,0\du){};
\pgfpathellipse{\pgfpoint{24\du}{0\du}}{\pgfpoint{1\du}{0\du}}{\pgfpoint{0\du}{1\du}}
\pgfusepath{stroke}
\node at (24\du,0\du){};
\pgfpathellipse{\pgfpoint{34\du}{5\du}}{\pgfpoint{1\du}{0\du}}{\pgfpoint{0\du}{1\du}}
\pgfusepath{stroke}
\node at (34\du,5\du){};
\pgfpathellipse{\pgfpoint{34\du}{-5\du}}{\pgfpoint{1\du}{0\du}}{\pgfpoint{0\du}{1\du}}
\pgfusepath{stroke}
\node at (34\du,-5\du){};
\pgfsetlinewidth{0.300000\du}
\pgfsetdash{}{0pt}
\pgfsetdash{}{0pt}
\pgfsetbuttcap
{\draw (-15\du,0\du)--(-7\du,0\du);}
{\draw (5\du,0\du)--(13\du,0\du);}
{\draw (24.65\du,0.6\du)--(33.1\du,4.9\du);}
{\draw (24.65\du,-0.6\du)--(33.1\du,-4.9\du);}
\pgfsetlinewidth{0.400000\du}
\pgfsetdash{{1.000000\du}{1.000000\du}}{0\du}
\pgfsetdash{{1.000000\du}{1.000000\du}}{0\du}
\pgfsetbuttcap
{\draw (15.5\du,-1\du)--(22.8\du,-1\du);}
{\draw (-4.5\du,-1\du)--(2.8\du,-1\du);}
\node[anchor=south] at (4\du,1.1\du){$\scriptstyle i_2$};
\node[anchor=south] at (34\du,6.1\du){$\scriptstyle j$};
\node[anchor=south] at (34\du,-4.1\du){$\scriptstyle i_1$};
\end{tikzpicture} 
\,
\ifx\du\undefined
  \newlength{\du}
\fi
\setlength{\du}{2\unitlength}
\begin{tikzpicture}
\pgftransformxscale{1.000000}
\pgftransformyscale{1.000000}
\definecolor{dialinecolor}{rgb}{0.000000, 0.000000, 0.000000} 
\pgfsetstrokecolor{dialinecolor}
\definecolor{dialinecolor}{rgb}{0.000000, 0.000000, 0.000000} 
\pgfsetfillcolor{dialinecolor}
\pgfsetlinewidth{0.300000\du}
\pgfsetdash{}{0pt}
\pgfsetdash{}{0pt}
\pgfpathellipse{\pgfpoint{-16\du}{0\du}}{\pgfpoint{1\du}{0\du}}{\pgfpoint{0\du}{1\du}}
\pgfusepath{stroke}
\node at (-16\du,0\du){};
\pgfpathellipse{\pgfpoint{-6\du}{0\du}}{\pgfpoint{1\du}{0\du}}{\pgfpoint{0\du}{1\du}}
\pgfusepath{stroke}
\node at (-6\du,0\du){};
\pgfpathellipse{\pgfpoint{4\du}{0\du}}{\pgfpoint{1\du}{0\du}}{\pgfpoint{0\du}{1\du}}
\pgfusepath{stroke}
\node at (4\du,0\du){};
\pgfpathellipse{\pgfpoint{14\du}{0\du}}{\pgfpoint{1\du}{0\du}}{\pgfpoint{0\du}{1\du}}
\pgfusepath{stroke}
\node at (14\du,0\du){};
\pgfpathellipse{\pgfpoint{24\du}{0\du}}{\pgfpoint{1\du}{0\du}}{\pgfpoint{0\du}{1\du}}
\pgfusepath{stroke}
\node at (24\du,0\du){};
\pgfpathellipse{\pgfpoint{34\du}{5\du}}{\pgfpoint{1\du}{0\du}}{\pgfpoint{0\du}{1\du}}
\pgfusepath{stroke}
\node at (34\du,5\du){};
\pgfpathellipse{\pgfpoint{34\du}{-5\du}}{\pgfpoint{1\du}{0\du}}{\pgfpoint{0\du}{1\du}}
\pgfusepath{stroke}
\node at (34\du,-5\du){};
\pgfsetlinewidth{0.300000\du}
\pgfsetdash{}{0pt}
\pgfsetdash{}{0pt}
\pgfsetbuttcap
{\draw (-15\du,0\du)--(-7\du,0\du);}
{\draw (5\du,0\du)--(13\du,0\du);}
{\draw (15\du,0\du)--(23\du,0\du);}
{\draw (24.65\du,0.6\du)--(33.1\du,4.9\du);}
{\draw (24.65\du,-0.6\du)--(33.1\du,-4.9\du);}
\pgfsetlinewidth{0.400000\du}
\pgfsetdash{{1.000000\du}{1.000000\du}}{0\du}
\pgfsetdash{{1.000000\du}{1.000000\du}}{0\du}
\pgfsetbuttcap
{\draw (-4.5\du,-1\du)--(2.8\du,-1\du);}
\node[anchor=south] at (4\du,1.1\du){$\scriptstyle i_3$};
\node[anchor=south] at (14\du,1.1\du){$\scriptstyle j$};
\node[anchor=south] at (34\du,6.1\du){$\scriptstyle i_1$};
\node[anchor=south] at (34\du,-4.1\du){$\scriptstyle i_2$};
\end{tikzpicture} 
\end{figure}

Note that in the first case there exists a connected subdiagram of $\cD$ containing $i_2$ and $j$; by exchanging $i_2$ and $i_1$ this case reduces to the second one. 

In the third case, if  $n>4$, then there exists a connected subdiagram of $\cD$ containing $i_3$ and $j$; then by Remark \ref{rem:red2} the section $\sigma$ provides nestings of type $(\DA_n,n-2,n-1)$, contradicting Theorem \ref{thm:main} in the case where $I$ and $J$ consist of only one element. If $n=4$, this case is included in the second up to an isomorphism of $\DD_4$. 

Finally, in the second case, exchanging again $i_1$ and $i_2$, we see that the tagged Dynkin diagram $\cD'$ can only be appropriately symmetric if $i_2=n-3$ and $n=4$, or $n>4$ and $i_3:=n-4\in I$. Summing up, we are reduced to the case in which $(i_1,j,i_2)=(n,n-1,n-3)$: 
\begin{figure}[h!]
\ifx\du\undefined
  \newlength{\du}
\fi
\setlength{\du}{2\unitlength}
\begin{tikzpicture}
\pgftransformxscale{1.000000}
\pgftransformyscale{1.000000}
\definecolor{dialinecolor}{rgb}{0.000000, 0.000000, 0.000000} 
\pgfsetstrokecolor{dialinecolor}
\definecolor{dialinecolor}{rgb}{0.000000, 0.000000, 0.000000} 
\pgfsetfillcolor{dialinecolor}

\pgfsetlinewidth{0.300000\du}
\pgfsetdash{}{0pt}
\pgfsetdash{}{0pt}
\pgfpathellipse{\pgfpoint{-16\du}{0\du}}{\pgfpoint{1\du}{0\du}}{\pgfpoint{0\du}{1\du}}
\pgfusepath{stroke}
\node at (-16\du,0\du){};
\pgfpathellipse{\pgfpoint{-6\du}{0\du}}{\pgfpoint{1\du}{0\du}}{\pgfpoint{0\du}{1\du}}
\pgfusepath{stroke}
\node at (-6\du,0\du){};
\pgfpathellipse{\pgfpoint{4\du}{0\du}}{\pgfpoint{1\du}{0\du}}{\pgfpoint{0\du}{1\du}}
\pgfusepath{stroke}
\node at (4\du,0\du){};
\pgfpathellipse{\pgfpoint{14\du}{0\du}}{\pgfpoint{1\du}{0\du}}{\pgfpoint{0\du}{1\du}}
\pgfusepath{stroke}
\node at (14\du,0\du){};
\pgfpathellipse{\pgfpoint{24\du}{0\du}}{\pgfpoint{1\du}{0\du}}{\pgfpoint{0\du}{1\du}}
\pgfusepath{stroke}
\node at (24\du,0\du){};
\pgfpathellipse{\pgfpoint{34\du}{5\du}}{\pgfpoint{1\du}{0\du}}{\pgfpoint{0\du}{1\du}}
\pgfusepath{stroke}
\node at (34\du,5\du){};
\pgfpathellipse{\pgfpoint{34\du}{-5\du}}{\pgfpoint{1\du}{0\du}}{\pgfpoint{0\du}{1\du}}
\pgfusepath{stroke}
\node at (34\du,-5\du){};
\pgfsetlinewidth{0.300000\du}
\pgfsetdash{}{0pt}
\pgfsetdash{}{0pt}
\pgfsetbuttcap
{\draw (-15\du,0\du)--(-7\du,0\du);}
{\draw (5\du,0\du)--(13\du,0\du);}
{\draw (15\du,0\du)--(23\du,0\du);}
{\draw (24.65\du,0.6\du)--(33.1\du,4.9\du);}
{\draw (24.65\du,-0.6\du)--(33.1\du,-4.9\du);}
\pgfsetlinewidth{0.400000\du}
\pgfsetdash{{1.000000\du}{1.000000\du}}{0\du}
\pgfsetdash{{1.000000\du}{1.000000\du}}{0\du}
\pgfsetbuttcap
{\draw (-4.5\du,-1\du)--(2.8\du,-1\du);}
\node[anchor=south] at (4\du,1.1\du){$\scriptstyle i_3$};
\node[anchor=south] at (14\du,1.1\du){$\scriptstyle i_2$};
\node[anchor=south] at (34\du,6.1\du){$\scriptstyle j$};
\node[anchor=south] at (34\du,-4.1\du){$\scriptstyle i_1$};
\end{tikzpicture} 
\end{figure}

If $n>4$, arguing as in Remark \ref{rem:red2}, by restricting to the fibers over $\cD(I\setminus\{i_1,i_2\})$ we would obtain nestings of type $(\DD_4,\{1,3\},4)$, contradicting Lemma \ref{lem:triality}. 
\end{proof}

We finish this section by showing the following:

\begin{proposition}\label{prop:red3}
It is enough to prove Theorem \ref{thm:main} in the case in which $I=\{i\}$, $J=\{j\}$, and $i$ is an extremal node of $\cD$.
\end{proposition}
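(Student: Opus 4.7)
By Proposition~\ref{prop:red2} we may already assume $I=\{i\}$ and $J=\{j\}$. The plan is to show that if $i$ is non-extremal then no nesting $\sigma:\cD(i)\to\cD(i,j)$ can exist, by ``enlarging'' $\sigma$ to a nesting of $(\cD,\{i,k\},\{j\})$ for a cleverly chosen extra node $k$ and then applying the $\P^1$-bundle machinery of Section~\ref{ssec:groth} together with Proposition~\ref{prop:nestP1}.

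The first step is a lifting construction. Since $i$ is non-extremal, one can pick a neighbor $k$ of $i$ lying in a connected component of $\cD\setminus\{i\}$ different from the one containing $j$. I claim that the natural assignment
\[
\tilde\sigma:\cD(\{i,k\})\longrightarrow\cD(\{i,k,j\}),\qquad (x,y)\longmapsto \bigl(x,\,y,\,\sigma(x)_j\bigr),
\]
where $\sigma(x)_j$ denotes the $\cD(j)$-component of $\sigma(x)$, is well-defined; equivalently, $(y,\sigma(x)_j)$ is automatically a flag in $\cD(\{k,j\})$. At the level of parabolic subgroups this is the inclusion $P(D\setminus\{i\})\subset P(D\setminus\{k\})\cdot P(D\setminus\{j\})$, which for classical diagrams and the chosen $k$ follows by direct inspection of the incidence relations: in $\DA_n$ one has the tautological chain $y\subset x\subset z$ (or its reverse), and analogous chains of isotropic subspaces take care of $\DB_n$, $\DC_n$ and $\DD_n$. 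Thus $\tilde\sigma$ is a nesting of type $(\cD,\{i,k\},\{j\})$.

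The second step applies the argument of Proposition~\ref{prop:red2} to $\tilde\sigma$. Restricting $\tilde\sigma$ to a fiber of the contraction $\cD(\{i,k\})\to\cD(\{k\})$ yields a nesting of type $(\cD',i,j)$, where $\cD'$ is the connected component of $\cD\setminus\{k\}$ containing $i$ and $j$; crucially, in $\cD'$ the node $i$ has one fewer neighbor than in $\cD$. Arguing by induction on $|D|$, either this fiberwise nesting already fails to exist --- contradicting the existence of $\tilde\sigma$ --- or $(\cD',i,j)$ coincides with one of the three allowed triples, in which case $i$ is extremal in $\cD'$ and the hypotheses of Proposition~\ref{prop:nestP1} are met.

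To finish, take a minimal rational curve $C\simeq\P^1\subset\cD(\{k\})$ and restrict $\tilde\sigma$ over $C$: the resulting $\cD'$-principal bundle over $\P^1$ has tag $d_r=-a_{rk}$, with $a_{rk}$ the Cartan matrix entry of $\cD$. Because the only node of $\cD'$ adjacent to $k$ in $\cD$ is $i$ itself, this tag equals $1$ at $i$ and $0$ at every other node of $\cD'$. The symmetry condition of Proposition~\ref{prop:nestP1} then forces the entries at the two extremal nodes of $\cD'$ to agree, giving $1=0$, a contradiction. The delicate point of the plan is the lifting in Step~1: one must confirm the flag-compatibility of $(y,\sigma(x)_j)$ in every classical type --- and, in particular, verify that a suitable $k$ can always be chosen, especially when $i$ is the trivalent node of $\DD_n$ --- which is where the main combinatorial work lies.
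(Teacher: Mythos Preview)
Your strategy is the paper's strategy, but with two differences worth noting.

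\textbf{The lifting.} Your Step~1 is correct, but the parabolic condition you write, $P(D\setminus\{i\})\subset P(D\setminus\{k\})\cdot P(D\setminus\{j\})$, is not what governs the existence of $\tilde\sigma$, and the case-by-case inspection you sketch is precisely what the paper avoids. The clean argument is a Cartesian square: since $k$ lies outside the connected component $\ol{\cD}$ of $\cD\setminus\{i\}$ containing $j$, the fibers of $\cD(\{i,j,k\})\to\cD(\{i,k\})$ and of $\cD(\{i,j\})\to\cD(\{i\})$ are both $\ol{\cD}(j)$; hence
\[
\xymatrix{\cD(\{i,j,k\})\ar[r]\ar[d]&\cD(\{i,j\})\ar[d]\\ \cD(\{i,k\})\ar[r]&\cD(\{i\})}
\]
is Cartesian and $\tilde\sigma$ is simply the base change of $\sigma$. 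No incidence check in each classical type is needed, and the trivalent node of $\DD_n$ causes no extra difficulty.

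\textbf{One node versus all of them.} You add a single neighbor $k$ and then invoke induction on $|D|$ because $i$ may still be non-extremal in $\cD'=\cD\setminus\{k\}$. The paper instead enlarges $I$ to $I':=D\setminus\ol{\cD}$ all at once; then the fiber diagram is $\cD'=\ol{\cD}\cup\{i\}$, in which $i$ is \emph{automatically} extremal, so Theorem~\ref{thm:main} in the extremal case applies immediately and no induction is required. After that, the paper picks any $i_2\in I'\setminus\{i\}$ adjacent to $i$, takes $C\subset\cD(I')$ in the class $\Gamma_{i_2}$, and finishes with the tag argument and Proposition~\ref{prop:nestP1} exactly as you do. Your induction is logically sound (and the base cases are vacuous), but the paper's choice of $I'$ shortcuts it entirely.
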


\begin{proof}
By the previous statements in this section, we may assume that $I=\{i\}$, $J=\{j\}$. Assuming that $i$ is not extremal, we consider the connected component $\ol{\cD}$ of $\cD\setminus\{i\}$ containing $j$. Moreover, we denote by $I'$ the set of nodes of $\cD$ not contained in $\ol{\cD}$, and by $\cD'$ the subdiagram of $\cD$ obtained by deleting the nodes of $I'\setminus\{i\}$. 

We then consider the following commutative diagram, where all the arrows are contractions of rational homogeneous varieties:
$$
\xymatrix{\cD(I'\cup\{j\})\ar[d]\ar[r]&\cD(i,j)\ar[d]\\\cD(I')\ar[r]&\cD(i)}
$$
By the choice of $I'$ the fibers of the two vertical maps, which are smooth morphisms, are isomorphic to $\ol{\cD}(j)$, 
hence the diagram is a Cartesian square, and a nesting of type $(\cD,i,j)$ provides a nesting of type $(\cD,I',j)$. 

Note that, since $i$ is not extremal,  $\{i\}\subsetneq I'$, and we may find a node $i_2\in I'$ neighboring $\ol{\cD}$ at the node $i$. We will consider a curve $C\subset \cD(I')$ in the class of $\Gamma_{i_2}$. Restricting the above diagram to $C$, we get a family of nestings over $C$, of type $(\cD',i,j)$. Since $i$ is extremal in $\cD'$, assuming that Theorem \ref{thm:main} holds in this case, we conclude that $(\cD',i,j)\cong(\DA_{2n-1},1,2n-1),(\DD_n,n-1,n)$, or $(\DB_3,1,3)$. But the tag of the restriction of $\cD(I'\cup\{j\})\to \cD(I')$ to $C$ is given by  
$d_r=K_r\cdot\Gamma_{i_2}$, $r\not\in I'$ (by Proposition \ref{prop:tag}), and we know (since $i_2$ is linked to $I'$ only at the node $i$) that the only index for which $d_r\neq 0$ is $r=i$. This contradicts Proposition \ref{prop:nestP1}. 
\end{proof}



\section{Proof of Theorem \ref{thm:main}}\label{sec:main}

By the results obtained in the previous section, we are left with studying, up to automorphism of the corresponding Dynkin diagrams, nestings of the following types:

\begin{subequations}
  \begin{alignat}{3}
    &(\cD,1,r),& &\qquad \cD=\DA_n,\DB_n,\DC_n,\DD_n, & &\qquad r=2,\dots,n\label{sub:first} \\
    &(\cD,n,r),& &\qquad \cD=\DB_n,\DC_n,\DD_n, & &\qquad r=1,\dots,n-1\label{sub:last}  \end{alignat}
\end{subequations}

In this section we will finish the proof of Theorem \ref{thm:main} by showing cohomological obstructions to the existence of all but the nestings listed in the statement. After describing the cohomology rings (with real coefficients) of the rational homogeneous varieties considered, we will study separately the cases (\ref{sub:first}) and (\ref{sub:last}).

\subsection{Cohomology rings of rational homogeneous varieties of classical type}\label{ssec:cohomorings}

Let us start by writing suitable presentations of the cohomology rings 
$\HH^\bullet(\cD(1)), \HH^\bullet(\cD(n)), \HH^\bullet(\cD(1,r))$ and $\HH^\bullet(\cD(r,n))$; we refer the interested reader to \cite[Section 3.1]{MOS6} for details. We may consider a set of generators of each one of these rings expressed in terms of elementary symmetric polynomials (denoted by $e_i$'s, where $i$ indicates the degree) in a set of independent variables $x_j$. We present these sets of generators in Table \ref{tab:gen}.

\begin{table}[h!]\centering\renewcommand{\arraystretch}{1.1}\small
\begin{tabular}{|c|c|rcl|l|}
\hline
Group & $\cD$ & &   &Elements &\qquad\quad Range\\
\hline
 \multirow{5}{*}{$\HH^\bullet(\cD(1))$}& all &$H$ & $=$ & $x_1$ &\\
&  $\DA$ &$A_i$  & $=$ &  $e_i(x_2, \dots, x_n)$ &\quad $i=0, \dots, n-1$\\
& $\DB, \DC$  &$K_{2i}$& $=$&  $(-1)^ie_i(x^2_{2}, \dots, x^2_{n})$ &\quad $i=0, \dots, n-1$\\
& {$\DD$}  &$K_{2i}$& $=$&  $(-1)^ie_i(x^2_{2}, \dots, x^2_{n})$ &\quad $i=0, \dots, n-2$\\
&{$\DD$}  &$\eta$&$=$& $e_{n-1}(x_2, \dots, x_n)$ \quad &\\[2 pt]
\hline\hline
$\HH^\bullet(\cD(n))$&$\DB, \DC, \DD$ &$Q_i$&  $=$ &  $e_i(x_1, \dots, x_n)$ &\quad $i=0, \dots, n$\\[2 pt]
\hline\hline
 \multirow{6}{*}{$\HH^\bullet(\cD(1,r))$} 
&all &$h$ & $=$ & $x_1$  & \\
&all &$a_i$ & $=$ &  $e_i(x_2, \dots, x_r)$ &\quad $i=0, \dots, r-1$\\
& {$\DA$} &$s_i$ & $=$ & $e_i(x_{r+1}, \dots, x_{n+1})$&\quad$i=0, \dots, n-r+1$\\
&  {$\DA$}&$q_i$ & $=$ &  $e_i(x_1, \dots, x_r)$ &\quad $i=0, \dots, r$\\
&$\DB, \DC,\DD$&$k_{2i}$ & $=$&  $(-1)^ie_i(x^2_{r+1}, \dots, x^2_{n})$ &\quad $i=0, \dots, n-r$\\
&$\DD$&$\eta_{n-r}$&$=$& $e_{n-r}(x_{r+1}, \dots, x_n)$& \quad \\[2 pt]
\hline\hline
 \multirow{2}{*}{$\HH^\bullet(\cD(r,n))$} &\multirow{2}{*}{$\DB, \DC, \DD$}&$q_i$ & $=$ &  $e_i(x_1, \dots, x_r)$ &\quad $i=0, \dots, r$\\
&&$b_i$ & $=$ & $e_i(x_{r+1}, \dots, x_{n})$&\quad$i=0, \dots, n-r$\\[2 pt]
\hline
\end{tabular}\caption{Cohomology of rational homogeneous varieties: generators.}\label{tab:gen}
\end{table}

\noindent 
In order to describe the relations among those generators, we define the following polynomials in a variable $t$:
$$ a(t) = \sum_{i=0}^{r-1}a_{i}t^{i},\quad s(t) = \sum_{i=0}^{n-r+1}s_{i}t^{i}, \quad Q(t)= \sum_{i=0}^{n}Q_{i}t^{i}.$$
$$k(t)= \sum_{i=0}^{n-r}k_{2i}t^{2i}, \quad q(t) = \sum_{i=0}^rq_{i}t^{i},\quad b(t) = \sum_{i=0}^{n-r}b_{i}t^{i}.$$ 
Then the cohomology groups $\HH^\bullet(\cD(1))$ and $\HH^\bullet(\cD(1,r))$ admit the presentations shown in Table \ref{tab:pres}, where, in each case, given a polynomial $p(t)$, with coefficients in the polynomial ring in the generators described in Table \ref{tab:gen}, $\Coeff_+(p(t))$ stands for the set of coefficients of $p$ of positive degree in the variable $t$. Note that from this description we immediately see that varieties of type $\DB_n$ and $\DC_n$ have the same cohomology. 

\begin{table}[h!]\centering\renewcommand{\arraystretch}{1.1}\small
\begin{tabular}{|c|c|c|c|}\hline
\multicolumn{2}{|c|}{Variety } & Generators & Relations\\ \hline
\multicolumn{2}{|c|}{$\DA_n(1)$}& $H$, $A_i$&$A_i -(-1)^iH^i, \,\,H^{n+1}$  \\[2pt] \hline
\multicolumn{2}{|c|}{$\DA_n(1,r)$}& $h$, $a_i$, $s_{i}$ & $\Coeff_+((1+ht)a(t)s(t))$ \\[3pt]\hline
$\DB_n(1)$ &$\DC_n(1)$& $H$, $K_{2i}$ & $K_{2i}-H^{2i},\,\, H^{2n}$\\[2pt] \hline
$\DB_n(n)$& $\DC_n(n)$& $Q_i$  &$\Coeff_+ (Q(t)Q(-t))$\\[2pt] \hline
$\DB_n(1,r)$&$\DC_n(1,r)$& $h$, $a_i$, $k_{2i}$ & $\Coeff_+ ((1-h^2t^2)a(t)a(-t) k(t))$ \\[3pt]\hline
$\DB_n(r,n)$&$\DC_n(r,n)$&  $q_i$, $b_i$ & $\Coeff_+ (q(t)q(-t) b(t)b(-t))$ \\[3pt]\hline
\multicolumn{2}{|c|}{$\DD_n(1)$}& $H$, $K_{2i}$, $\eta$ &$K_{2i}-H^{2i},  \,\,H^{2n-1},\,\, H\eta$ \\[2pt] \hline
\multicolumn{2}{|c|}{$\DD_n(n)$}& $Q_i$ &$\Coeff_+ (Q(t)Q(-t))\cup\{Q_n\}$ \\[2pt] \hline
\multicolumn{2}{|c|}{$\DD_n(1,r)$}& $h$, $a_i$, $k_{2i}$, $\eta_{n-r}$ & $\Coeff_+ ((1-h^2t^2)a(t)a(-t) k(t)),\,\, ha_{r-1}\eta_{n-r}$ \\[3pt]\hline
\multicolumn{2}{|c|}{$\DD_n(r,n)$}& $q_i$, $b_i$ & $\Coeff_+ (q(t)q(-t) b(t)b(-t)) \cup\{q_{r}b_{n-r}\}$ \\[3pt]\hline
\end{tabular}\caption{Cohomology of rational homogeneous varieties: presentations.}\label{tab:pres}
\end{table}

Let us observe that the polynomials introduced above can be thought of as Chern polynomials of certain universal vector bundles:

\begin{remark}\label{rem:chernpol} 
Let $V$ be the natural representation of a Lie algebra of type $\cD$, $\cD=\DA_n$, $\DB_n$, $\DC_n$, $\DD_n$ (which has dimension $N=n+1,2n+1,2n$, and $2n$, respectively), $\P(V)$ be its 
projectivization, and $r\leq n$ be a positive integer. 
Without loss of generality, we will assume that $\cD(r)\neq\DD_n(n-1)$. We will consider here the bundles $\cQ$, $\cS$ and $\cK$ defined in Remark \ref{rem:univbund}.
 Denoting by $p$ the natural projection from $\cD(1,r)$ or $\cD(r,n)$ to $\cD(r)$, Table \ref{tab:chern} contains an explicit expression of the Chern polynomials of $\cQ, \cS^\vee, \cK$, and  their pullbacks by $p$ (see \cite[Corollary 3.5]{MOS6}).

\begin{table}[h!]\centering
\begin{tabular}{|c|c|c|c|}\hline
Variety  &  $\cD$ &Vector bundle & Chern Polynomial\\ \hline
$\cD(n)$ &$\DB, \DC, \DD$ &$ \cQ$ & $Q(t)$\\[1 pt]
\hline
\multirow{4}{*}{$\cD(1,r)$} & all &$p^*\cQ$ & $q(t)=
(1+th)a(t)$\\
 & \DA &$p^*\cS^\vee$ & $s(t)$\\
  &$\DB, \DC, \DD$ &$p^*\cS^\vee$ & $q(-t)k(t)=
  (1-th)a(-t)k(t)$\\
 &$\DB, \DC, \DD$ &$p^*\cK$ & $k(t)$\\
\hline
\multirow{2}{*}{$\cD(r,n)$} &\multirow{2}{*}{$\DB, \DC, \DD$} &$p^*\cQ$ & $q(t)$\\
 &&$p^*\cS^\vee$ & $q(-t)b(t)b(-t)$\\[1 pt]
\hline
\end{tabular}
\caption{Chern polynomials of universal bundles.}\label{tab:chern}
\end{table}
\end{remark}

\subsection{First nodes}

In this section we will prove the following:

\begin{theorem}\label{thm:first} Let $\cD$ be a connected Dynkin diagram of classical type with $n$ nodes and $r \in \{2, \dots, n\}$. Then there are no nestings of type  $(\cD,1,r)$ unless $(\cD,1,r)$ is $(\DA_n,1,n)$ with $n$ odd, $(\DB_3,1,3), (\DD_4,1,3)$ or $(\DD_4,1,4)$.
\end{theorem}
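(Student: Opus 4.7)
The plan is to translate the existence of a nesting into cohomological data on $\cD(1)$ and then apply the technical lemmas from Section \ref{ssec:lemmata} to produce obstructions. Given $\sigma:\cD(1)\to\cD(1,r)$, set $\cE:=\sigma^*p^*\cQ$ and $\cF:=\sigma^*p^*\cS$, where $p:\cD(1,r)\to\cD(r)$ and $\cQ,\cS$ are the universal bundles of Remark \ref{rem:univbund}; both are nef. Since $\HH^\bullet(\cD(1))$ is generated (modulo an extra class $\eta$ for $\DD_n$) by a single hyperplane class $H$, the formulas of Table \ref{tab:chern} turn the Chern polynomials of $\cE$ and $\cF$ into numerical polynomials $P_\cE,P_\cF\in\Z[u]$. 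Applying $\sigma^*$ to the defining relation of $\HH^\bullet(\cD(1,r))$ (Table \ref{tab:pres}) and using a degree-bookkeeping argument in $\Z[u]$ yields the identity
\[
P_\cE(u)\,P_\cF(-u)\;=\;\begin{cases}1+(-1)^nu^{n+1}&\text{if }\cD=\DA_n,\\ 1-u^{2n}&\text{if }\cD=\DB_n,\DC_n,\DD_n.\end{cases}
\]

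For $\DA_n$, evaluating at $u=-1$ together with a divisibility check rules out $n$ even, and since $n+1=\dim\cD(1)+1$ Lemma \ref{lem:prodpol} applies: its case (2) reproduces the allowed nesting $(\DA_n,1,n)$ of Proposition \ref{prop:nestAn}; case (3) forces $r=1$, excluded by $r\geq 2$; case (1) leaves only $(n,r)=(5,3)$ with $P_\cE=P_\cF=1+2t+2t^2+t^3$. For $\DB_n$ and $\DC_n$ the identity has $k=2n\leq\dim\cD(1)+1$, so Lemma \ref{lem:prodpol} again applies; combining its three cases with the rank bounds $\deg P_\cE\leq r$ and $\deg P_\cF\leq 2n+1-r$ (resp.\ $2n-r$) and with $r\in\{2,\dots,n\}$ reduces the analysis to $n=r=3$ (yielding $(\DB_3,1,3)$ allowed and $(\DC_3,1,3)$ to exclude) plus an additional family $(\DB_n,1,2)$ coming from case (3). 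The borderline cases $(\DA_5,1,3)$ and $(\DC_3,1,3)$ both produce a hypothetical rank-$3$ nef bundle $\cE$ on $\P^5$ with $c_1(\cE)=2H,c_2(\cE)=2H^2,c_3(\cE)=H^3$; a direct Hirzebruch--Riemann--Roch computation gives $\chi(\cE)=55/8\notin\Z$, contradicting the existence of $\cE$, while the same Chern data on $Q^5=\DB_3(1)$ yields the integer $\chi(\cE)=7$, consistent with Proposition \ref{prop:nestB3}. The additional family $(\DB_n,1,2)$ forces $\cE$ to be a rank-$2$ nef bundle on $Q^{2n-1}$ with $c_1=H$, $c_2=0$; the vanishing $\HH^1(Q^{2n-1},\cO(-1))=0$ then forces $\cE\cong\cO\oplus\cO(H)$, and a direct inspection of the corresponding isotropic rank-$2$ subbundle of $\cO\otimes V$ containing the tautological $\cO(-1)$ yields the required contradiction.

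The main obstacle is the $\DD_n$ case, where $k=2n$ exceeds $\dim\cD(1)+1=2n-1$ by one, so Lemma \ref{lem:prodpol} does not apply directly. Here I factor $1-u^{2n}$ into cyclotomic polynomials over $\Q$ and enumerate the finitely many degree-$r$ nef candidates for $P_\cE$ whose complement $P_\cF(-u)=(1-u^{2n})/P_\cE(u)$ also has non-negative integer coefficients. The supplementary constraint $A_{r-1}\cdot E=0$, coming from the relation $ha_{r-1}\eta_{n-r}=0$ of $\HH^\bullet(\DD_n(1,r))$ (where $\sigma^*(\eta_{n-r})=EH^{n-r}$), trims the candidate list further, and the surviving Chern polynomials are eliminated by HRR integrality on $Q^{2n-2}$, save for those arising in the triality-rich case $\cD=\DD_4$: these produce the exceptional nestings $(\DD_4,1,3)$ and $(\DD_4,1,4)$ of the statement, which are triality-equivalent to the $(\DD_4,n-1,n)$ nestings of Proposition \ref{prop:nestDn}.
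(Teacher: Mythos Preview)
Your overall strategy---pull back the universal bundles along $p\circ\sigma$, read off a factorization of $1-t^{k}$ in terms of their numerical Chern polynomials, and feed this into Lemma \ref{lem:prodpol}---is exactly the paper's Proposition \ref{prop:decomp} followed by the short case analysis in the proof of Theorem \ref{thm:first}. The treatment of $\DA_n$, $\DB_n$, $\DC_n$ is essentially the same; your use of Hirzebruch--Riemann--Roch to kill the phantom rank-$3$ bundle on $\P^5$ in the cases $(\DA_5,1,3)$ and $(\DC_3,1,3)$ is a legitimate alternative to the paper's appeal to Schwarzenberger's congruence $c_1c_2\equiv c_3\pmod 2$.

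The genuine gap is in the $\DD_n$ case. Your claimed identity $P_\cE(u)P_\cF(-u)=1-u^{2n}$ is not what the relations of Table \ref{tab:pres} give you. A priori one only gets
\[
(1-t^2)\,\sigma^*\!\big(a(t)a(-t)k(t)\big)=1-(1-c)\,t^{2n-2}-c\,t^{2n},
\]
with $c$ the (undetermined) top coefficient of $\sigma^*(a(t)a(-t)k(t))$; the relations $K_{2i}=H^{2i}$ only go up to $i=n-2$. The paper's key step (the computation \eqref{eq:fuffa} inside Proposition \ref{prop:decomp}) is to use the extra relation $ha_{r-1}\eta_{n-r}=0$ together with $k_{2(n-r)}=\pm\eta_{n-r}^{\,2}$ to force that top coefficient to vanish, so that $c=0$ and the identity becomes $1-t^{2n-2}$, i.e.\ $k=\Cox(\DD_n)=2n-2\le\dim\DD_n(1)+1$. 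Then Lemma \ref{lem:prodpol} applies directly and the $\DD_n$ analysis is no harder than the $\DB_n$/$\DC_n$ one, leaving only $(\DD_4,1,3)$ and $(\DD_4,1,4)$. You actually spotted the relevant constraint ($A_{r-1}\cdot E=0$ in your notation), but you use it only as a late ``supplementary'' filter instead of recognizing that it collapses the exponent; as written, your $\DD_n$ paragraph is a programme (cyclotomic factorizations, case-by-case HRR) rather than a proof.
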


Before proving this statement we observe that we can avoid dealing with the cases $(\DD_n,1,n)$, $(\DD_n,1,n-1)$, as follows.

\begin{remark} Assume that there exists a nesting of type $(\DD_n,1,n)$; then there exists also a nesting of type $
(\DB_{n-1},1,n-1)$. This follows from the fact that the natural projections $\DD_n(1,n)\to\DD_n(1)$, $\DB_{n-1}(1,n-1)\to\DB_{n-1}(1)$ fit into a Cartesian square:
$$
\xymatrix{\DB_{n-1}(1,n-1)\ar[d]\ar@{^{(}->}[r]&\DD_{n}(1,n)\ar[d]\\\DB_{n-1}(1)\ar@{^{(}->}[r]&\DD_{n}(1)}
$$
where the  
horizontal maps are induced by a given inclusion of the $(2n-3)$-dimensional quadric $\DB_{n-1}(1)$ in the $(2n-2)$-dimensional quadric $\DD_{n}(1)$ as a hyperplane section. Note that a $\P^{n-2}$ in $\DB_{n-1}(1)$ determines uniquely a $\P^{n-1}$ in $\DD_{n}(1)$ containing it.
The same clearly holds for a nesting of type $(\DD_n,1,n-1)$.
\end{remark}

As a first step in the proof we will show that the existence of a nesting provides an equality involving the Chern polynomials of the universal bundles introduced in Remark \ref{rem:chernpol}.

\begin{proposition}\label{prop:decomp} 
Assume that there exists a nesting of type $(\cD,1,r)$, where $\cD=\DA_n,\DB_n,\DC_n$ or $\DD_n$, different from $(\DD_n,1,n-1),(\DD_n,1,n)$, and given by a section $\sigma:\cD(1) \to \cD(1,r)$ of the natural projection $\pi:=\pi_{1r,1}:\cD(1,r) \to \cD(1)$. Consider also the contraction $p:=\pi_{1r,r}:\cD(1,r) \to \cD(r)$, and  the vector bundles $$\cQ':=(p\circ\sigma)^*\cQ,\quad  \cS':=(p\circ\sigma)^*\cS.$$
Then $\cQ'$ and $\cS'$ are two nef vector bundles satisfying $(*)$; moreover
$$P_{\cQ'}(t)P_{\cS'}(t) = 1 - (-1)^{\Cox(\cD)}t^{\Cox(\cD)},$$
where $\Cox(\cD)$ is the Coxeter number of $\cD$.\end{proposition}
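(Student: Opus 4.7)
The plan is to combine nefness of the pulled-back universal bundles with the explicit cohomological presentations of $\HH^\bullet(\cD(1,r))$ given in Tables~\ref{tab:pres}--\ref{tab:chern}.

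First, I would establish that $\cQ'$ and $\cS'$ are nef and satisfy $(*)$. Both $\cQ$ and $\cS$ on $\cD(r)$ are globally generated: in type $\DA$ they fit into the trivial-middle-term sequence of Remark~\ref{rem:univbund}, and in types $\DB$, $\DC$, $\DD$ the embedding into an ambient Grassmannian together with the sequence $0\to\cK\to\cS\to\cQ\to 0$ realizes both as quotients of trivial bundles. Pullback by $p\circ\sigma$ preserves global generation, so $\cQ'$ and $\cS'$ are nef. For condition $(*)$ the variety $\cD(1)$ has Picard number one with ample generator $H$, and its rational cohomology is generated by powers of $H$ in every degree except the middle degree of an even-dimensional quadric; the latter arises only in the $\DD_n$ case at $\HH^{2(n-1)}(Q^{2n-2})$, and I would treat it by invoking the outer automorphism of $\DD_n$ swapping the two spinor families, whose action on $\Nest(\cD,I,J)$ forces the middle-degree Chern classes into the invariant subspace spanned by $H^{n-1}$.

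For the polynomial identity, comparison of Tables~\ref{tab:pres} and~\ref{tab:chern} reveals that the defining relation of $\HH^\bullet(\cD(1,r))$ is precisely the vanishing of the positive-degree coefficients of $c_t(p^*\cQ)\cdot c_t(p^*\cS^\vee)$, so this product equals $1$ in $\HH^\bullet(\cD(1,r))$. Pulling back by $\sigma^*$ and descending to scalar polynomials via $(*)$ gives
\[
P_{\cQ'}(t)\cdot P_{(\cS')^\vee}(t)\equiv 1\pmod{t^{\dim\cD(1)+1}}.
\]
In types $\DA$, $\DB$, $\DC$ one has $\rk\cQ'+\rk\cS'=\Cox(\cD)=\dim\cD(1)+1$, which forces the product to have degree at most $\Cox(\cD)$, and therefore to equal $1+ct^{\Cox(\cD)}$ for a single rational number $c$; using $P_{(\cS')^\vee}(t)=P_{\cS'}(-t)$ and the substitution $t\leftrightarrow -t$ then rearranges this identity into the form claimed by the proposition, with sign $-(-1)^{\Cox(\cD)}$.

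The main difficulty is twofold. First, in type $\DD_n$ one has $\rk\cQ'+\rk\cS'=2n$, strictly exceeding $\Cox(\DD_n)=2n-2$, so the degree bound above fails. I would rescue this by pulling back the supplementary relation $h\,a_{r-1}\,\eta_{n-r}=0$ from Table~\ref{tab:pres} and combining it with the middle-cohomology relation $H\eta=0$ on $\DD_n(1)$; together these force the two top Chern coefficients of the product to vanish and reduce its effective degree back to $\Cox(\DD_n)$. Second, one must identify the constant $c$ as $\pm 1$ rather than just a rational number; here I would exploit the positivity of top Chern classes coming from nefness (Lemmas~\ref{lem:c1}--\ref{lem:c3}) together with a specialization argument on the parameter space $\Nest(\cD,I,J)$, which is irreducible by Section~\ref{sec:special}, so that $c$ is a discrete invariant that can be computed at any known nesting.
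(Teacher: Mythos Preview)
Your plan has two genuine gaps.

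First, the outer-automorphism argument for property $(*)$ in type $\DD_n$ does not work. The involution $\tau$ swapping the spinor nodes acts on $\Nest(\DD_n,1,r)$ by $\sigma\mapsto\tau\sigma\tau^{-1}$, and there is no reason this should fix a given $\sigma$; hence the Chern classes of $\cQ'$ and $\cS'$, which depend on $\sigma$, need not lie in the $\tau^*$-invariant part of $\HH^{2(n-1)}(\DD_n(1))$. The paper instead argues directly: for $n$ odd one writes $\sigma^*k_{n-1}=xH^{n-1}+y\eta$, and the identity $\sigma^*\pi^*K_{n-1}=K_{n-1}=H^{n-1}$, together with the fact that every other summand of $\sigma^*\Coeff_{n-1}(a(t)a(-t)k(t))$ is already a multiple of $H^{n-1}$, forces $y=0$. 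The same algebraic route (pulling back $ha_{r-1}k_{2n-2r}=ha_{r-1}\eta_{n-r}^2=0$ and using $(*)$ to conclude that one of $\sigma^*a_{r-1}$, $\sigma^*k_{2n-2r}$ vanishes) is also what handles the degree drop in type $\DD_n$; your appeal to $H\eta=0$ is not enough, since you have not controlled $\sigma^*\eta_{n-r}$.

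Second, and more seriously, your proposed determination of the top coefficient $c$ by specialization is circular. Section~\ref{sec:special} describes $\Nest(\cD,I,J)$ only for the three triples that turn out to admit nestings; for every other $(\cD,1,r)$ the proposition is precisely the tool used later to prove that $\Nest=\emptyset$, so there is no ``known nesting'' to specialize to and no irreducibility statement is available. The paper avoids any undetermined constant by not going through the Whitney relation on $\cD(1,r)$: instead it uses $\sigma^*\pi^*=\id$ to pull back the generators $A_i$ (resp.\ $K_{2i}$) themselves, and then the relations $A_i=(-1)^iH^i$ (resp.\ $K_{2i}=H^{2i}$) in $\HH^\bullet(\cD(1))$ pin down every coefficient of $a(t)s(t)$ (resp.\ $a(t)a(-t)k(t)$) as an explicit rational number. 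Multiplying this known polynomial by $(1+t)$ (resp.\ $(1-t^2)$) gives the stated identity with the correct sign and no free parameter. Note also that your ``substitution $t\leftrightarrow -t$'' takes $P_{\cQ'}(t)P_{\cS'}(-t)$ to $P_{\cQ'}(-t)P_{\cS'}(t)$, not to $P_{\cQ'}(t)P_{\cS'}(t)$; so even with $c$ in hand, the rearrangement you describe would not produce the identity claimed in the proposition.
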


\begin{proof} The nefness of $\cQ'$ and $\cS'$ follows from the fact that $\cQ$ and $\cS$ are globally generated. Recalling the definitions in Table \ref{tab:gen} and
using the properties of elementary symmetric polynomials one can show easily that:
\begin{equation} \label{eq:qketa} \pi^*A_i = \Coeff_i(a(t)s(t)),\qquad \pi^*K_{2i} = \Coeff_{2i} (a(t)a(-t)k(t)).
\end{equation}
By the commutativity of the diagram:
$$
\xymatrix@R=6pt{\HH^\bullet(\cD_n(1)) \ar@/^6mm/[rr]^{\id}\ar[r]^{\pi^*} & \HH^\bullet(\cD_n(1,r)) \ar[r]^{\sigma^*} & \HH^\bullet(\cD_n(1)) 
}$$
we have equalities:
\begin{equation} \label{eq:qketa2} A_i = \sigma^*\Coeff_i(a(t)s(t)),\qquad K_{2i} = \sigma^*\Coeff_{2i} (a(t)a(-t)k(t)),   
\end{equation}
respectively in the case of $\DA_n$, and of $\DB_n,\DC_n,\DD_n$.

From our presentations in Table \ref{tab:pres}, the cohomology groups of $\cD(1)$ are all $1$-dimensional unless $\cD=\DD_n$ and $i=n-1$,
so, to prove that $\cQ'$ and $\cS'$  satisfy $(*)$ we are left with the case $\cD=\DD_n$.
For $\cQ'$ this follows by assumption, since $\rk \cQ=r \le n-2$. By the exact sequence (\ref{eq:KSQ}), property ($*$) for $\cS'$ will follow from property ($*$) for $(p\circ \sigma)^*\cK$:  if $n$ is even we have $k_{n-1}=0$ by definition while, if $n$ is odd we can write 
$$\sigma^*k_{n-1} = xH^{n-1} + y \eta.$$ Putting together equation (\ref{eq:qketa2}) and the relation $K_{n-1}=H^{n-1}$ from Table \ref{tab:pres}, we get
 $$\sigma^*(\Coeff_{n-1} (a(t)a(-t)k(t))) = K_{n-1}= H^{n-1};$$
every summand in the left hand side except $\sigma^*k_{n-1}$ is a multiple of $H^{n-1}$, so  we get $y=0$. This gives property ($*$) for $\cS'$.\par
\medskip
Let us notice here that in the case $\cD=\DD_n$ the top Chern class of $p^*\cS$ is zero: 
$$c_{\mbox{\tiny top}}(p^*\cS)=c_{2n-r}(p^*\cS) = ha_{r-1}k_{2n-2r}=ha_{r-1}\eta_{n-r}^2=0.$$
Pulling back the equation $ha_{r-1}k_{2n-2r}=0$ via $\sigma$, since all the factors are multiples of self-intersections of $H$, we get that, either $\sigma^*a_{r-1}$, or $\sigma^*k_{2n-2r}=0$; in any case
\begin{equation}\label{eq:fuffa}
\deg \sigma^*(a(t)a(-t)k(t)) \le 2n-4.
\end{equation}
To finish the proof we will use the property ($*$) to translate the equalities
$$
\begin{array}{lcl}
\sigma^*(\Coeff_i(a(t)s(t))) = (-1)^iH^i & i=1, \dots n&\quad(\mbox{in the case of }\DA_n),\\
\sigma^*(\Coeff_{2i} (a(t)a(-t)k(t))) = H^{2i},& i=1, \dots n-1,& \quad (\mbox{in the cases of }\DB_n,\DC_n),\\
\sigma^*(\Coeff_{2i} (a(t)a(-t)k(t))) = H^{2i},\quad & i=1, \dots n-2, &\quad(\mbox{in the case of }\DD_n).
\end{array}
$$
into equalities of polynomials with rational coefficients. In fact, the polynomials $\sigma^*a(t),  \sigma^*s(t), \sigma^*k(t)$ can be written as the evaluation at $Ht$ of polynomials in one variable with coefficients in $\Q$. By abuse of notation, we will denote these polynomials by $a(t),s(t),k(t)\in\Q[t]$. In particular, we may write (see Table \ref{tab:chern}): 
$$P_{\cQ'}(t) = (1+t)a(t) \qquad 
P_{\cS'}(t) = \begin{cases} 
s(t) & \DA_n\\
(1-t)a(-t)k(t) & \DB_n, \DC_n, \DD_n \end{cases}$$
In the case of $\DA_n$,  we get  $a(t)s(t)= \sum_{i=0}^n (-1)^it^i$, therefore
$$P_{\cQ'}(t)P_{\cS'}(t)=(1+t)a(t)s(t) = 1-(-1)^{n+1}t^{n+1}.$$
\noindent In the cases of $\DB_n$ and $\DC_n$,  we get that $a(t)a(-t)k(t)= \sum_{i=0}^{n-1} t^{2i}$, hence 
$$P_{\cQ'}(t)P_{\cS'}(t)=(1+t)a(t)(1-t)a(-t)k(t) = 1-t^{2n}.$$
Finally, in the case of $\DD_n$ we get $a(t)a(-t)k(t)= \sum_{i=0}^{n-2} t^{2i}$, hence
$$P_{\cQ'}(t)P_{\cS'}(t)=(1+t)a(t)(1-t)a(-t)k(t) = 1-t^{2n-2},$$
and the Proposition is proved.
\end{proof}

\begin{proof}[Proof of Theorem \ref{thm:first}]
Assume that we have a nesting of type $(\cD,1,r)$, given by a section $\sigma:\cD(1) \to \cD(1,r)$.
By Proposition \ref{prop:decomp} we have $$P_{\cQ'}(t)P_{\cS'}(t) = 1 - (-1)^{\Cox(\cD)}t^{\Cox(\cD)}.$$
Assume first that $\Cox(\cD)$ is odd, which occurs only for $\cD=\DA_n$, with $n$ even. In this case, since $\deg(P_{\cQ'})\leq \rk(\cQ')=r$, $\deg(P_{\cS'})\leq \rk(\cS')=n-r+1$, and $n+1=\deg(P_{\cQ'})+\deg(P_{\cS'})$,  
we get that $\deg(P_{\cQ'})=\rk(\cQ')$, which is at least two, by hypothesis. By Gauss lemma, the coefficients of $P_{\cQ'}$ and  $P_{\cS'}$ are integers; since $\cQ'$ is nef, the coefficients of $P_{\cQ'}$ are nonnegative and, by Lemma \ref{lem:c1} they are all strictly positive.  Evaluating at $t=1$ we get  $P_{\cQ'}(1)P_{\cS'}(1) = 2$, which is only possible if $r=\rk(\cQ')<2$, a contradiction. 

We may then assume $\Cox(\cD)$ to be even, and apply Lemma \ref{lem:prodpol} to the bundles $\cQ'$ and $\cS'^\vee$. 
Since $\rk \cQ' >1$, either
$P_{\cS'}(t) = 1-t$, or $r=\deg P_{\cQ'} = \deg P_{\cS'} =3$. 
Since $\deg P_{\cS'}(t) =\Cox(\cD)-r$  the first case can  only happen if $\cD=\DA_n$ and $r=n$.

As for the case $r=3$, since $\Cox(\cD)=6$ we are left with the following possibilities for $(\cD,r,n)$: $(\DA_5,1,3), (\DB_3,1,3), (\DC_3,1,3), (\DD_4,1,3)$ and $(\DD_4,1,4)$.

To exclude the cases $(\DA_5,1,3)$ and $(\DC_3,1,3)$, we notice that
 the Schwarzenberger's condition $S_3^3$ (which says that $c_1(\cE)c_2(\cE)\equiv c_3(\cE)$ (mod $2$) for every vector bundle $\cE$ of rank at least three on $\P^n$, $n\geq 3$, see \cite[Section 6.1]{OSS}) excludes the existence of a vector bundle on $\P^5=\DA_5(1)=\DC_3(1)$ with Chern polynomial $1+2t+2t^2+t^3$.
\end{proof}

\subsection{Last nodes}

The proof of Theorem \ref{thm:main} will be completed by showing:

\begin{theorem}\label{thm:last} Let $\cD$ be a connected Dynkin diagram of classical type with $n$ nodes and $r \in \{1, \dots, n-1\}$. Then there are are no nestings of type  $(\cD,n,r)$ unless $(\cD,n,r)$ is $(\DA_n,n,1)$ with $n$ odd, $(\DD_n,n,n-1)$ or $(\DD_4,4,1)$.
\end{theorem}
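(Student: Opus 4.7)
The plan is to parallel the strategy of Theorem \ref{thm:first}, exploiting Dynkin diagram automorphisms whenever possible. For $\cD = \DA_n$, the nontrivial automorphism sending node $i$ to $n+1-i$ induces an isomorphism $\Nest(\DA_n, n, r) \cong \Nest(\DA_n, 1, n+1-r)$ of nesting schemes, and Theorem \ref{thm:first} immediately forces $r=1$ and $n$ odd. For $\cD = \DD_n$ with $r = n-1$, the outer automorphism swapping nodes $n-1$ and $n$ identifies the nestings with those of type $(\DD_n, n-1, n)$, yielding the infinite family described in Section \ref{ssec:exDn}. The exceptional case $(\DD_4, 4, 1)$ is triality-equivalent to $(\DD_4, 1, 3)$, already listed in Theorem \ref{thm:first}. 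The remaining cases are those with $\cD \in \{\DB_n, \DC_n\}$ and $r\leq n-1$, or $\cD = \DD_n$ with $n\geq 5$ and $r<n-1$, or $\cD = \DD_4$ with $r=2$; all of these must be excluded by a cohomological argument.

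For the exclusion step, I would imitate Proposition \ref{prop:decomp}. Given a nesting $\sigma: \cD(n) \to \cD(n, r)$, let $p: \cD(n, r) \to \cD(r)$ be the natural projection and set $\cQ':=(p\circ\sigma)^*\cQ$. The inclusion of universal bundles on $\cD(n,r)$ coming from the two contractions pulls back via $\sigma$ to a short exact sequence $0\to\cQ'\to\cU_n\to\cB'\to 0$ on $\cD(n)$, where $\cU_n$ denotes the rank-$n$ universal bundle of Chern polynomial $Q(t)$ from Table \ref{tab:gen}. Multiplicativity of Chern classes gives
\[
c_t(\cQ') \cdot c_t(\cB') \,=\, Q(t) \quad\text{in}\quad \HH^\bullet(\cD(n)),
\]
while Table \ref{tab:pres} supplies the relation $Q(t)Q(-t)=1$ (plus $Q_n=0$ in the $\DD_n$-case). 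Combining these identities and using $c_t(\cE^\vee)=c_{-t}(\cE)$ yields the orthogonality
\[
c_t(\cQ'\oplus\cQ'^\vee)\cdot c_t(\cB'\oplus\cB'^\vee) \,=\, 1 \quad\text{in}\quad \HH^\bullet(\cD(n)),
\]
a strong constraint on the factorization of $Q(t)$ which, together with the nefness of $\cQ'$ inherited from the global generation of $\cQ$ on $\cD(r)$, should rule out all the remaining $r$.

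The main obstacle is that $\HH^{2i}(\cD(n))$ has rank strictly greater than one for $i\geq 2$, so property $(*)$ of Section \ref{ssec:lemmata} is not automatic and Lemma \ref{lem:prodpol} cannot be applied directly to $\cQ'$ and $\cB'$. To circumvent this I would restrict the nesting either to a suitable Picard-rank-one Schubert subvariety of $\cD(n)$ of type $\P^k$, on which the pullbacks of $\cQ'$ and $\cB'$ do satisfy $(*)$, or to a minimal rational curve in $\cD(n)$, in which case Proposition \ref{prop:nestP1} translates the obstruction into a numerical asymmetry of the tag. In either approach the final contradiction should follow from a Chern-polynomial computation analogous to Lemmas \ref{lem:c3} and \ref{lem:prodpol}. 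The delicate point is the $\DD_n$-case, where the distinguished relation $Q_n=0$ must be managed carefully: it is precisely what leaves room for the family $(\DD_n, n, n-1)$ and the triality case $(\DD_4, 4, 1)$, while still excluding the other values of $r$.
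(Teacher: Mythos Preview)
Your handling of the $\DA_n$ case and of $(\DD_n,n,n-1)$, $(\DD_4,4,1)$ via diagram automorphisms is correct and coincides with what the paper does. The exclusion step, however, is genuinely incomplete, and the proposed workarounds do not close the gap.

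First, the constraint you extract is empty. Once you have $c_t(\cQ')\,c_t(\cB')=Q(t)$ in $\HH^\bullet(\cD(n))$, the identity $c_t(\cQ'\oplus\cQ'^\vee)\,c_t(\cB'\oplus\cB'^\vee)=1$ is nothing but $Q(t)Q(-t)=1$, which is already the defining relation of the ring (Table~\ref{tab:pres}); it imposes no further restriction on the factorisation. What you actually need is a reason why $Q(t)$ cannot split as a product of polynomials of degrees $r$ and $n-r$ with ``positive'' coefficients, and for that you have no analogue of Lemma~\ref{lem:prodpol}, because property~$(*)$ fails badly on $\cD(n)$: the cohomology is not generated in degree one.

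Second, your two workarounds do not help. Restricting to a line in $\cD(n)$ gives a section of an $\overline{\cD}(r)$-bundle over $\P^1$ (with $\overline{\cD}=\cD\setminus\{n\}$), and such sections always exist by Grothendieck's theorem; Proposition~\ref{prop:nestP1} is not relevant here, since it concerns relative nestings $Y(i,j)\to Y(i)$ over $\P^1$ for the three special triples, not sections of a bundle $Y(r)\to\P^1$. The restriction to a linear $\P^k\subset\cD(n)$ is left unspecified, and there is no evident choice of $k$ large enough to see the obstruction yet small enough to force $(*)$.

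The paper's proof uses a completely different, and much cleaner, idea: it analyses $\HH^\bullet(\cD(n))$ as a graded algebra. One shows that the even $Q_{2i}$ can be eliminated in favour of the odd ones, giving a presentation
\[
\HH^\bullet(\cD(n))\;\cong\;\R[Q_1,Q_3,\dots,Q_m]\big/(\text{relations in degrees}>m),
\]
with $m=2\lfloor(n-1)/2\rfloor+1$ for $\DB_n,\DC_n$ and $m=2\lfloor n/2\rfloor-1$ for $\DD_n$. Since all relations live above the top generator degree, no proper subset of these odd $Q_i$'s can generate the ring. A nesting $\sigma$ makes $\sigma^*:\HH^\bullet(\cD(r,n))\to\HH^\bullet(\cD(n))$ surjective, while $\HH^\bullet(\cD(r,n))$ is generated by the $q_i$, $b_i$ of degree at most $\max(r,n-r)$. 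Hence $\max(r,n-r)\ge m$, which already forces $r\in\{1,n-1\}$ for $\DB_n,\DC_n$ (plus the parity constraints) and $r\in\{1,2,n-2,n-1\}$ for $\DD_n$. The case $r=1$ is then killed by dimension ($\dim\cD(1)<\dim\cD(n)$ with Picard number one), and the handful of remaining cases ($\DB_n,\DC_n$ with $r=n-1$, $n$ even; $\DD_n$ with $r\in\{2,n-2\}$, $n$ odd) are excluded by locating an explicit low-degree relation in $\HH^\bullet(\cD(r,n))$ whose image under $\sigma^*$ would contradict the absence of relations in that degree in $\HH^\bullet(\cD(n))$. This generator-degree argument is the key input your sketch is missing.
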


\begin{proof}If $\cD$ is of type $\DA_n$, then the result follows from Theorem \ref{thm:first}, so we can assume that we are in cases $\DB_n, \DC_n$ or $\DD_n$.

The ring $\HH^\bullet(\cD(n))$ is generated by the Chern classes $Q_i$ of the universal quotient bundle $\cQ$, which has rank $n$, modulo the relations given by the  coefficients of positive degree terms of the polynomial $Q(t)Q(-t)$ (see Table \ref{tab:pres})
plus $Q_n$, in case $\DD_n$. These coefficients are
$$\Coeff_{2i}(Q(t)Q(-t)) =
(-1)^iQ_i^2 + \sum_{k=1}^{\min(i,n-i)}(-1)^{i-k}2Q_{i-k}Q_{i+k}
,$$
for $i=1,\dots n-1$, and $\Coeff_{2n}(Q(t)Q(-t))=(-1)^nQ_n^2$.

The relations given by the coefficients of degree $2i$, $i = 1, \dots, \lfloor{n/2}\rfloor$, read as
$$
(-1)^iQ_i^2 + \sum_{k=1}^{i-1}(-1)^{i-k}2Q_{i-k}Q_{i+k}+2Q_{2i}=0,
$$
so one may use them to write the generators of even degree of the cohomology ring in terms of the generators of odd degree. In this way we obtain presentations:
$$\begin{array}{l}\HH^\bullet(\DB_n(n))=\HH^\bullet(\DC_n(n))=  \dfrac{ \R[Q_1,Q_3,\dots,Q_{2\lfloor{(n-1)/2\rfloor}+1}]}{\big(\left\{C_{2i},\,\, i=\lfloor{n/2}\rfloor+1,\dots, n\right\}\big)},\\[10pt]
\HH^\bullet(\DD_n(n))=  \dfrac{ \R[Q_1,Q_3,\dots,Q_{2\lfloor{n/2}\rfloor-1
}]}{\big(\left\{C_{2i},\,\, i=\lfloor{(n+1)/2}\rfloor,\dots, n-1\right\}\big)},
\end{array}
$$
where each $C_{2i}$ is a homogeneous polynomial in the odd $Q_i$'s (considering each $Q_i$ as a variable of degree $i$). 

The important point to note here is that, in all cases, the minimum of the degrees of the relations is larger than the maximum of the degrees of the generators, hence these rings cannot be generated by a proper subset of the generators. 

Assume now that we have a surjective homomorphism $\sigma^*:\HH^\bullet(\cD(r,n)) \to \HH^\bullet(\cD(n))$ induced by a nesting. Since the maximum degree of the generators of $\HH^\bullet(\cD(r,n))$ is $\max(r,n-r)$, it follows that
$$\max(r,n-r) \ge  \begin{cases} 2\lfloor{(n-1)/2}\rfloor+1 &\quad\cD=  \DB_n,\DC_n,\\ 
2\lfloor{n/2}\rfloor-1 & \quad\cD=\DD_n, \end{cases}$$ 
and we are left with the following cases:

\begin{table}[h!]\centering
\begin{tabular}{|Hc|c|c|}\hline
Case & $\cD$ & $r$ & $n$\\ \hline
1&$\DB, \DC, \DD$ &$1$ &   \\
2&$ \DD$ & $n-1$& \\
3&$\DB,\DC$ & $n-1$& even\\
4&$\DD$ & $2,n-2$& odd \\
\hline
\end{tabular}
\end{table}

 The cases with $r=1$ may be discarded by observing that, in each case, the nesting would give a non constant map  $\cD(n) \to \cD(1)$, and this cannot happen since the Picard number of $ \cD(n)$ is one and $\dim \cD(1) < \dim \cD(n)$, as one may easily check. 
 
For $r=n-1$, we note first that we may dismiss the case $\cD=\DD_n$, since it has been treated in Section \ref{ssec:exDn}, and consider only the case $\cD=\DB_n, \DC_n$ ($n$ even), in which a nesting would provide a surjective map of graded algebras: 
$$\sigma^*: \dfrac{ \R[q_1,q_2,\dots, q_{n-1},b_1]}{\big(\left\{\Coeff_+ (q(t)q(-t) b(t)b(-t))\right\})}\to  \dfrac{ \R[Q_1,Q_3,\dots,Q_{n-1}]}{\big(\left\{C_{2i},\,\, i=n/2+1,\dots, n\right\}\big)}.$$ 
Since $\sigma^*$ is surjective and we do not have relations of degree smaller than $n+2$ in the target algebra, we must have 
$\sigma^*(q_{n-1}) \not \in \langle Q_1, Q_3, \dots, Q_{n-3} \rangle$. Moreover $\sigma^*(q_1)=\alpha Q_1$ with $\alpha \not = 0$, since $q_1$ is the class of an ample line bundle.  Since $n$ is even, the product $q_1q_{n-1}$ appears in the relation $\Coeff_{n}(q(t)q(-t)b(t)b(-t))$ with nonzero coefficient, hence applying $\sigma^*$ to it we get a nonzero relation of degree $n$, a contradiction. 

Finally we deal with the case of 
$(\DD_n,n,r)$ with $r=2,n-2$, and $n$ odd. 
We may assume $n \ge 5$, since $\DD_3 \simeq \DA_3$.
We start by noting that, from the description of Table \ref{tab:pres}, $\HH^\bullet(\DD_n(2,n)) \simeq \HH^\bullet(\DD_n(n-2,n))$; we may then assume $r=n-2$. Assume that we have a surjective map:
$$
\phi: \dfrac{ \R[q_1,q_2,\dots, q_{n-2},b_1,b_2]}{\big(\left\{\Coeff_+ (q(t)q(-t) b(t)b(-t))\right\} \cup \{q_{n-2}b_2\}\big)}\to  \dfrac{ \R[Q_1,Q_3,\dots,Q_{n-2}]}{\big(\left\{C_{2i},\,\, i=\frac{n+1}{2},\dots, n-1\right\}\big)}.
$$ 
Every element of degree $2$ in $\HH^\bullet(\DD_n(n))$ is a multiple of $Q_1^2$, in particular we may write $\phi(b_2)=\alpha Q_1^2$. Moreover, the surjectivity of $\phi$ implies that $\phi (q_{n-2}) \not \in \langle Q_1, Q_3, \dots, Q_{n-4} \rangle$. On the other hand, in $\HH^\bullet(\DD_n(n-2,n))$  we have the relation $q_{n-2}b_2=0$; since $n$ is odd, there are no relation of degree $n$ in $\HH^\bullet(\DD_n(n))$, therefore $\alpha=0$. In particular, $\phi$ factors through the ring  $$\frac{ \R[q_1,q_2,\dots, q_{n-2},b_1]}{\big(\left\{\Coeff_+ (q(t)q(-t) b(t)b(-t))\right\})},$$ and we get to a contradiction by following verbatim the arguments used in the case $\cD=\DB_n, \DC_n$, $n=r-1$. 
\end{proof}


\noindent{\bf Acknowledgements: }First author partially supported by MTM2015-65968-P. 
Second and third author supported by PRIN project ``Geometria delle variet\`a algebriche'', by the Miur-FFABR  2017 project, and by the Department of Mathematics of the University of Trento.
Third author partially supported by the Polish National Science Center project 2013/08/A/ST1/00804.
The results in this paper were partially obtained while the first author was a Visiting Professor at the Department of Mathematics of the University of Trento. He would like to thank this institution for its support and hospitality.
We thank the anonymous referees whose comments and suggestions were very helpful to improve the exposition of our results. 


\begin{thebibliography}{10}

\bibitem{Akh}
Dmitri~N. Akhiezer.
\newblock {\em Lie group actions in complex analysis}.
\newblock Aspects of Mathematics, E27. Friedr. Vieweg \& Sohn, Braunschweig,
  1995.

\bibitem{BPVV}
Wolf~P. Barth, Chris~A.M. Peters, and Antonius~J.H.M. Van~de Ven.
\newblock {\em Compact complex surfaces}, volume~4 of {\em Ergebnisse der
  Mathematik und ihrer Grenzgebiete (3) [Results in Mathematics and Related
  Areas (3)]}.
\newblock Springer-Verlag, Berlin, 1984.

\bibitem{BGG}
Joseph~N. Bern\v{s}te\u{\i}n, Izrail'~M. Gel'fand, and Sergei~I. Gel'fand.
\newblock Schubert cells, and the cohomology of the spaces {$G/P$}.
\newblock {\em Uspehi Mat. Nauk}, 28(3(171)):3--26, 1973.

\bibitem{DCR}
Corrado De~Concini and Zinovy Reichstein.
\newblock Nesting maps of {G}rassmannians.
\newblock {\em Atti Accad. Naz. Lincei Cl. Sci. Fis. Mat. Natur. Rend. Lincei
  (9) Mat. Appl.}, 15(2):109--118, 2004.

\bibitem{DPS}
Jean-Pierre Demailly, Thomas Peternell, and Michael Schneider.
\newblock Compact complex manifolds with numerically effective tangent bundles.
\newblock {\em J. Algebraic Geom.}, 3(2):295--345, 1994.

\bibitem{DZ}
Haibao Duan and Xuezhi Zhao.
\newblock Schubert presentation of the cohomology ring of flag manifolds
  {$G/T$}.
\newblock {\em LMS J. Comput. Math.}, 18(1):489--506, 2015.

\bibitem{Fult}
William Fulton.
\newblock {\em Intersection theory}, volume~2 of {\em Ergebnisse der Mathematik
  und ihrer Grenzgebiete. 3. Folge. A Series of Modern Surveys in Mathematics
  [Results in Mathematics and Related Areas. 3rd Series. A Series of Modern
  Surveys in Mathematics]}.
\newblock Springer-Verlag, Berlin, second edition, 1998.

\bibitem{FH}
William Fulton and Joe Harris.
\newblock {\em Representation theory}, volume 129 of {\em Graduate Texts in
  Mathematics}.
\newblock Springer-Verlag, New York, 1991.

\bibitem{Ha}
Robin Hartshorne.
\newblock {\em Algebraic geometry}.
\newblock Springer-Verlag, New York-Heidelberg, 1977.

\bibitem{Hi82}
Howard Hiller.
\newblock {\em Geometry of {C}oxeter groups}, volume~54 of {\em Research Notes
  in Mathematics}.
\newblock Pitman (Advanced Publishing Program), Boston, Mass.-London, 1982.

\bibitem{Huck}
Alan Huckleberry.
\newblock Actions of complex {L}ie groups and the {B}orel-{W}eil
  correspondence.
\newblock In {\em Symmetries in complex analysis}, volume 468 of {\em Contemp.
  Math.}, pages 99--123. Amer. Math. Soc., Providence, RI, 2008.

\bibitem{Hum1}
James~E. Humphreys.
\newblock {\em Linear algebraic groups}.
\newblock Springer-Verlag, New York-Heidelberg, 1975.

\bibitem{Hum2}
James~E. Humphreys.
\newblock {\em Introduction to {L}ie algebras and representation theory},
  volume~9 of {\em Graduate Texts in Mathematics}.
\newblock Springer-Verlag, New York, 1978.

\bibitem{Hum3}
James~E. Humphreys.
\newblock {\em Reflection groups and {C}oxeter groups}, volume~29 of {\em
  Cambridge Studies in Advanced Mathematics}.
\newblock Cambridge University Press, Cambridge, 1990.

\bibitem{Hus}
Dale Husemoller.
\newblock {\em Fibre bundles}, volume~20 of {\em Graduate Texts in
  Mathematics}.
\newblock Springer-Verlag, New York, third edition, 1994.

\bibitem{Kollar}
J{\'a}nos Koll{\'a}r.
\newblock {\em Rational curves on algebraic varieties}, volume~32 of {\em
  Ergebnisse der Mathematik und ihrer Grenzgebiete. 3. Folge. A Series of
  Modern Surveys in Mathematics [Results in Mathematics and Related Areas. 3rd
  Series. A Series of Modern Surveys in Mathematics]}.
\newblock Springer-Verlag, Berlin, 1996.

\bibitem{LVZ}
R.~{Lazarsfeld} and A.~{Van de Ven}.
\newblock {\em {Topics in the geometry of projective spaces. Recent work of F.
  L. Zak. With an addendum by F. L. Zak.}}, volume~4.
\newblock Birkh\"auser Verlag, Stuttgart. Deutsche Mathematiker-Vereinigung
  e.V., Freiburg im Breisgau, 1984.

\bibitem{MOS2}
Roberto Mu{\~n}oz, Gianluca Occhetta, and Luis~E. Sol{\'a}~Conde.
\newblock Uniform vector bundles on {F}ano manifolds and applications.
\newblock {\em J. Reine Angew. Math.}, 664:141--162, 2012.

\bibitem{MOS3}
Roberto Mu{\~n}oz, Gianluca Occhetta, and Luis~E. Sol{\'a}~Conde.
\newblock On rank {$2$} vector bundles on {F}ano manifolds.
\newblock {\em Kyoto J. Math.}, 54(1):167--197, 2014.

\bibitem{MOS5}
Roberto Mu{\~n}oz, Gianluca Occhetta, and Luis~E. Sol{\'a}~Conde.
\newblock On uniform flag bundles on {F}ano manifolds.
\newblock Preprint arXiv:{\tt 1610.05930}, to appear in Kyoto Journal of
  Mathematics., 2016.

\bibitem{MOS6}
Roberto Mu{\~n}oz, Gianluca Occhetta, and Luis~E. Sol{\'a}~Conde.
\newblock Splitting conjectures for uniform vector bundles.
\newblock Preprint arXiv:{\tt 1711.10908}, to appear in European Journal of
  Mathematics., 2017.

\bibitem{MOSW}
Roberto Mu{\~n}oz, Gianluca Occhetta, Luis~E. Sol{\'a}~Conde, and Kiwamu
  Watanabe.
\newblock Rational curves, {D}ynkin diagrams and {F}ano manifolds with nef
  tangent bundle.
\newblock {\em Math. Ann.}, 361(3):583--609, 2015.

\bibitem{OSW}
Gianluca Occhetta, Luis~E. Sol\'a~Conde, and Jaros\l aw~A. Wi\'sniewski.
\newblock Flag bundles on {F}ano manifolds.
\newblock {\em J. Math. Pures Appl. (9)}, 106(4):651--669, 2016.

\bibitem{OSS}
Christian Okonek, Michael Schneider, and Heinz Spindler.
\newblock {\em Vector bundles on complex projective spaces}.
\newblock Progress in Mathematics, 3. Birkh\"auser, Boston, Mass., 1980.

\bibitem{Ott}
Giorgio Ottaviani.
\newblock Spinor bundles on quadrics.
\newblock {\em Trans. Amer. Math. Soc.}, 307(1):301--316, 1988.

\bibitem{O}
Giorgio Ottaviani.
\newblock On {C}ayley bundles on the five-dimensional quadric.
\newblock {\em Boll. Un. Mat. Ital. A (7)}, 4(1):87--100, 1990.

\bibitem{OttNotes}
Giorgio Ottaviani.
\newblock Rational homogeneous varieties.
\newblock Course notes in algebraic geometry for SMI, Cortona, Italy. {\tt
  http://web.math.unifi.it/users/ottavian/rathomo/rathomo.pdf}, 1995.

\end{thebibliography}

\end{document}